
\documentclass{amsart}

\usepackage{amsmath, amssymb, amscd, mathrsfs}

\usepackage{xcolor}
\usepackage[linktocpage=true,bookmarksopen=true]{hyperref}
\usepackage[only,mapsfrom]{stmaryrd}

\newtheorem{theorem}{Theorem}[section]
\newtheorem{proposition}[theorem]{Proposition}
\newtheorem{definition}[theorem]{Definition}
\newtheorem{conjecture}[theorem]{Conjecture}
\theoremstyle{remark}
\newtheorem{remark}[theorem]{Remark}

\newtheorem*{acknowledgments}{Acknowledgments}

\numberwithin{equation}{section}

\newcommand{\Order}{\mathcal{O}}
\newcommand{\into}{\hookrightarrow}

\newcommand{\onto}{\twoheadrightarrow}
\newcommand{\isomto}{\overset{\sim}{\to}}
\newcommand{\tensor}{\mathbin{\otimes}}
\newcommand{\N}{\mathbb{N}}
\newcommand{\Z}{\mathbb{Z}}
\newcommand{\Q}{\mathbb{Q}}

\newcommand{\F}{\mathbb{F}}
\newcommand{\Gm}{\mathbf{G}_{m}}
\newcommand{\Ga}{\mathbf{G}_{a}}
\newcommand{\dirlim}{\varinjlim}
\newcommand{\invlim}{\varprojlim}
\newcommand{\sep}{\mathrm{sep}}
\newcommand{\Affine}{\mathbb{A}}
\newcommand{\Proj}{\mathbb{P}}
\newcommand{\incl}{\mathrm{incl}}
\newcommand{\RP}{\mathrm{RP}}
\newcommand{\RPS}{\mathrm{RPS}}
\newcommand{\sm}{\mathrm{sm}}
\newcommand{\et}{\mathrm{et}}
\DeclareMathOperator{\Hom}{Hom}
\DeclareMathOperator{\End}{End}

\DeclareMathOperator{\Ext}{Ext}
\DeclareMathOperator{\Spec}{Spec}
\DeclareMathOperator{\Ab}{Ab}
\DeclareMathOperator{\sheafhom}{\mathbf{Hom}}
\DeclareMathOperator{\sheafext}{\mathbf{Ext}}
\DeclareMathOperator{\Res}{Res}
\DeclareMathOperator{\dlog}{dlog}

\minCDarrowwidth1.5pc 

\title{Existence of global N\'eron models beyond semi-abelian varieties}
\author{Otto Overkamp}
\address{Mathematisches Institut der Heinrich-Heine-Universit\"at D\"usseldorf, Universit\"atsstr. 1, 40225 D\"usseldorf, Germany}
\email{otto.overkamp@uni-duesseldorf.de}
\author{Takashi Suzuki}
\address{
	Department of Mathematics, Chuo University,
	1-13-27 Kasuga, Bunkyo-ku, Tokyo 112-8551, Japan
}
\email{tsuzuki@gug.math.chuo-u.ac.jp}
\date{June 9, 2026}
\subjclass[2010]{14L15 (Primary) 14G17, 14F30, 11G10, 14K15 (Secondary)}
\keywords{N\'eron model, unipotent algebraic group, logarithmic Hodge-Witt sheaf}


\begin{document}

\begin{abstract}
	We first prove Bosch-L\"utkebohmert-Raynaud's conjectures
	on existence of global N\'eron models of
	not necessarily semi-abelian algebraic groups
	in the perfect residue fields case.
	We then give a counterexample to the existence
	in the imperfect residue fields case.
	Finally,
	as a complement to the conjectures,
	we classify unirational wound unipotent groups
	``up to relative perfection'',
	again in the perfect residue fields case.
	The key ingredient for all these is
	the duality for relatively perfect unipotent groups.
\end{abstract}

\maketitle

\tableofcontents


\section{Introduction}


\subsection{Background}

Let $S$ be an irreducible Dedekind scheme
with function field $K$.
For a smooth group scheme $G_{K}$ over $K$,
a \emph{N\'eron model} of $G_{K}$ over $S$ is
a smooth group scheme $G$ over $S$
together with an isomorphism
$G \times_{S} \Spec K \cong G_{K}$
such that for any smooth $S$-scheme $X$,
any morphism $X \times_{S} \Spec K \to G_{K}$ over $K$
uniquely extends to a morphism $X \to G$ over $S$.
It is unique if it exists.
The basic reference is
Bosch-L\"utkebohmert-Raynaud's book \cite{BLR90}.%
\footnote{
	N\'eron models as defined above are called
	N\'eron lft-models
	(where lft stands for locally finite type)
	in \cite[Section 10]{BLR90}.
	N\'eron models as defined in \cite{BLR90} are
	further assumed to be quasi-compact.
}
In what follows,
we assume that $S$ is excellent
and $G_{K}$ is connected.
All group schemes below are assumed commutative.

The existence of a N\'eron model is known to be true
in some cases and false in some other cases
and is unknown in general.
A satisfactory answer to this problem is given when $S$ is local
(\cite[Section 10.2, Theorems 1 and 2]{BLR90}):
$G_{K}$ admits a N\'eron model
(resp.\ a quasi-compact N\'eron model)
if and only if it does not contain $\Ga$
(resp.\ $\Ga$ or the multiplicative group $\Gm$
after strict henselization).
For a general (global) $S$,
the N\'eron model of an abelian variety exists and is quasi-compact
by \cite[Section 1.4, Theorem 3]{BLR90}.
More generally, the N\'eron model of a semi-abelian variety
(an extension of an abelian variety by a torus) exists
by \cite[Section 10.1, Proposition 6 and Section 7.5, Proposition 1]{BLR90}
but may not be quasi-compact.
On the other hand, the N\'eron model of $G_{K}$ does not exist
if it contains $\Ga$ (\cite[Section 10.1, Proposition 8]{BLR90}).
These settle the problem when the characteristic of $K$ is zero.
For the positive characteristic case and for general $S$,
a mystery remained and \cite{BLR90} left it as a conjecture:

\begin{conjecture}[{\cite[Section 10.3, Conjectures I and II]{BLR90}}] \label{0012}
	Let $G_{K}$ be a connected smooth algebraic group over $K$.
	\begin{enumerate}
		\item \label{0047}
                If $G_{K}$ does not contain $\Ga$,
                then it admits a N\'eron model over $S$.
		\item \label{0048}
			If $G_{K}$ does not contain
                a non-zero unirational subgroup,%
			\footnote{
				A variety over $K$ is said to be unirational
				if there is a dominant rational map
				from an affine space $\Affine_{K}^{n}$ over $K$.
			}
			then it admits a quasi-compact N\'eron model over $S$.
	\end{enumerate}
\end{conjecture}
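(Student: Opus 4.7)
The strategy is to combine the local existence theorem of \cite[Section 10.2]{BLR90} and the known global case for semi-abelian varieties with the duality for relatively perfect unipotent groups that the abstract advertises as the key input.

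First, using the structure of connected smooth commutative algebraic groups over $K$, I would present $G_{K}$ as an extension of a semi-abelian variety $H_{K}$ by a connected smooth unipotent $U_{K}$. The assumption that $G_{K}$ contains no copy of $\Ga$ forces $U_{K}$ to be wound, and the assumption in Part (\ref{0048}) additionally excludes tori inside $H_{K}$ and non-trivial unirational subgroups of $U_{K}$. The global N\'eron model of the semi-abelian $H_{K}$ is already produced in \cite{BLR90}, so the remaining task is to build a global N\'eron model of the wound unipotent $U_{K}$ and then to lift the extension class from $K$ to a class over $S$.

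Second, the local N\'eron model of $G_{K}$ exists over $\Spec \Order_{S,s}$ at every closed point $s \in S$ by the local case. Spreading $G_{K}$ out yields a smooth group scheme $\mathcal{G}_{V}$ over some dense open $V \subseteq S$, and for cofinitely many points the base change agrees with the local N\'eron model, so the problem reduces to gluing $\mathcal{G}_{V}$ with the local N\'eron models at the finitely many remaining points. Since wound unipotent groups lack the extension-theoretic rigidity of abelian or toric ones, I would measure the obstruction to such a gluing by appropriate $\Ext^{1}$-groups in a suitable category of smooth $S$-group schemes, and then show the obstruction vanishes using the relatively perfect duality: that duality should identify the obstruction with cohomology that can be computed, and shown to vanish or be compensated, once the perfect residue field hypothesis permits the relative perfection machinery. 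I expect this step to be the main obstacle, requiring the formulation of a duality statement sharp enough both to detect the gluing obstruction for wound unipotent groups over a global base and to reconstruct the extension structure of the prospective global model, together with a careful approximation argument passing from vanishing in the relatively perfect setting to existence of an actual smooth group scheme over $S$.

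Third, for the quasi-compactness assertion of Part (\ref{0048}), the relevant point is that the non-quasi-compactness of a global N\'eron model arises from subgroups whose local N\'eron models have infinite component groups, the prototype being $\Gm$ whose N\'eron model over a complete discrete valuation ring has component group $\Z$. Excluding non-zero unirational subgroups of $G_{K}$ kills these contributions and, combined with the existence construction from the first two steps, delivers a quasi-compact global N\'eron model.
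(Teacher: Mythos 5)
There is a genuine gap, and it sits exactly at the step you flag as routine rather than at the step you flag as hard. You assert that after spreading $G_{K}$ out to a smooth group scheme over a dense open $V\subseteq S$, ``for cofinitely many points the base change agrees with the local N\'eron model,'' so that the problem reduces to gluing at finitely many points. This is unjustified, and it is false in general: it is precisely the failure mode exhibited by the paper's counterexample. For $r>1$ (imperfect residue fields) the group $\nu(r)_{K}$ is wound unipotent, has a local N\'eron model at \emph{every} closed point (Proposition \ref{0035}), yet admits no global N\'eron model (Theorem \ref{0032}), because the special fiber of the local N\'eron model at every closed point $s$ has a nontrivial wound quotient $\nu(r-1)_{s}$, whereas any finite-type model over a dense open has special fibers that are products of copies of $\Ga$ (Proposition \ref{0034}). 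So the discrepancy between a spread-out model and the local N\'eron models is not concentrated at finitely many points, there is no finite gluing problem, and no $\Ext^{1}$-obstruction class in the sense you describe can be extracted. Consequently your argument, if it worked, would prove part \eqref{0047} of the conjecture unconditionally --- but part \eqref{0047} is false in general (Theorem \ref{0050}\eqref{0053}), so the argument cannot be repaired without making the perfect residue field hypothesis do real work at this exact point.

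Two further issues. First, the hypothesis $r=1$ does not enter your sketch in any load-bearing way; in the paper it is essential and enters through the localization sequence \eqref{0060} for $\nu_{\infty}(1)_{S}^{\RP}$, whose local terms $i_{s,\ast}\Q_{p}/\Z_{p}$ are \'etale, so that $\sheafhom_{S_{\RPS}}(H,i_{s,\ast}\Q_{p}/\Z_{p})=0$ for $H$ with connected fibers; for $r>1$ these local terms become $i_{s,\ast}\nu_{\infty}(r-1)_{s}^{\RP}$, which are not \'etale, and the construction breaks. The paper's construction (Proposition \ref{0003}) is genuinely global --- the candidate model is $\sheafhom_{S_{\RP}}(H,\nu_{\infty}(1)_{S}^{\RP})$ for $H$ a fiberwise-connected model of the dual --- not a gluing of local N\'eron models, and the equivalence between ordinary and relatively perfect N\'eron models (Propositions \ref{0062} and \ref{0000}) is what lets one return from the relatively perfect world. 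Second, your structural decomposition is off over an imperfect field: one cannot in general write $G_{K}$ as an extension of a semi-abelian variety by a smooth unipotent subgroup; the paper instead quotients by the maximal torus and then takes the maximal abelian variety quotient, whose kernel is unipotent but need not be smooth, which is why the maximal smooth closed subgroup and relative perfection are invoked in the proof of Theorem \ref{0009}. Your quasi-compactness heuristic in the third step is in the right spirit, but making it precise requires the equivalence between ``no nonzero unirational subgroup'' and ``connected dual'' (Proposition \ref{0019}), which rests on Achet's classification of unirational wound groups.
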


Part of the difficulties lies in the facts that
non-zero unipotent groups over imperfect fields
do not have to contain $\Ga$
(such unipotent groups are said to be \emph{wound})
and there is no good notion
of good reduction for unipotent groups
in positive characteristic.
These imply that there may be serious differences
between generic fibers and general fibers
when $S$ has infinitely many points.
For instance, there is a unipotent wound
unirational (hence connected) group over $K$
with a N\'eron model
\emph{all} of whose special fibers are disconnected
(Oesterl\'e's example
\cite[Section 10.1, Example 11]{BLR90}).
It is given by $(\Res_{K^{1 / p^{n}} / K} \Gm) / \Gm$
with $n \ge 1$ and $[K : K^{p}] < \infty$,
where $\Res$ denotes the Weil restriction functor.
Its N\'eron model is not quasi-compact.

Some partial results are known.
Statement \eqref{0048} is true
if $G_{K}$ admits a regular compactification
by \cite[Section 10.3, Theorem 5 (a)]{BLR90}
(and hence it is true under the assumption of
existence of resolutions of singularities).
The assumption of resolutions of singularities
should be a red herring
given that it is not needed to construct
(quasi-compact) N\'eron models of abelian varieties.
Both statements \eqref{0047} and \eqref{0048} are true
if $G_{K}$ is the Jacobian
of a proper geometrically reduced curve
by the first author (\cite[Theorems 1.2 and 1.3]{Ove24}).


\subsection{Main results}

In this paper, we prove the following:

\begin{theorem} \label{0050} \mbox{}
	\begin{enumerate}
		\item \label{0052}
			Conjecture \ref{0012} (both statements) is true
			if the residue fields of $S$
			at closed points are perfect.
		\item \label{0053}
			Conjecture \ref{0012} \eqref{0047} is
			false in general.
			More precisely,
			if $K$ has positive characteristic $p > 0$
			with $[K : K^{p}] < \infty$
			and $S$ has infinitely many points and
			an imperfect residue field at some closed point,
			then there exists a wound unipotent
			unirational group over $K$
			that does not admit a N\'eron model over $S$.
	\end{enumerate}
\end{theorem}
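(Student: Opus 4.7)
The plan for part \eqref{0052} is to reduce to the wound unipotent case and then exploit the duality for relatively perfect unipotent groups advertised in the abstract. Using Chevalley's structure theorem, write $G_{K}$ as an extension $0 \to U_{K} \to G_{K} \to H_{K} \to 0$ with $H_{K}$ semi-abelian, and observe that the hypothesis of no $\Ga$-subgroup (resp.\ no non-zero unirational subgroup) passes to $U_{K}$. Since \cite{BLR90} gives global N\'eron models of semi-abelian varieties and local N\'eron models of wound unipotent groups at every closed point, the global N\'eron model of $G_{K}$ can be assembled from one for $U_{K}$. The duality delivers the latter: one describes the relative perfection of the candidate N\'eron model of $U_{K}$ as the object dual to a globally defined sheaf on the smooth site of $S$. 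After proving representability and smoothness of this candidate, and comparing with the known local N\'eron models, one obtains the global N\'eron model. Quasi-compactness under the hypothesis of \eqref{0048} should then follow from a finite-generation property of the dual forced by the absence of unirational subgroups.

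The main obstacle in part \eqref{0052} will be the descent of this sheaf-theoretic candidate from the relatively perfect world back to an honest smooth $S$-group scheme. Perfection of the residue fields is essential here: it is what makes the relative-perfection functor clean enough to be inverted, and what keeps the structure of the local N\'eron models finite-type in a manner compatible with global gluing. I expect the argument to reduce to a careful study of wound unipotent groups over complete discrete valuation rings with perfect residue field, then globalized by gluing on the complement of finitely many bad closed points.

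For part \eqref{0053}, the plan is to produce an explicit counterexample exploiting imperfection of $k(s_{0})$ at a distinguished closed point $s_{0} \in S$. Pick $a \in \Order_{S, s_{0}}$ whose reduction $\bar{a} \in k(s_{0})$ is not a $p$-th power, and consider a wound unipotent unirational $K$-group $G_{K}$ whose defining equations involve $a$, for instance a subquotient of $(\Res_{K(a^{1/p})/K} \Gm)/\Gm$ or the zero locus of a quasi-linear polynomial such as $y^{p} - x - a x^{p}$ in $\Ga_{K}^{2}$. A local N\'eron model at every closed point exists by \cite{BLR90}; the hard part is to exhibit an $S$-smooth scheme $X$ and a $K$-morphism $X_{K} \to G_{K}$ that provably fails to extend to $X \to G$ for any candidate global smooth $S$-group $G$. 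Such an obstruction witness should be built from a globally defined section whose behaviour near $s_{0}$ is controlled by the failure of $\bar{a}$ to admit a $p$-th root in $k(s_{0})$, producing an obstruction that is invisible at every single closed point (all local N\'eron models exist) yet forces non-smoothness at $s_{0}$ when combined with the patching data coming from the infinitely many other closed points of $S$.
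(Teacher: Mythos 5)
Your outline for part \eqref{0052} captures the paper's leading idea---realize the candidate model of the wound unipotent part as a sheaf-Hom over $S$ into a globally defined dualizing sheaf, prove representability, and compare with the known local N\'eron models (this is Proposition \ref{0003}, with $\nu_{\infty}(1)_{S}^{\RP}$ as the target)---but two steps need repair. First, the decomposition $0 \to U_{K} \to G_{K} \to H_{K} \to 0$ with $H_{K}$ semi-abelian and $U_{K}$ unipotent is not available over imperfect fields; the paper instead splits off the maximal torus as a \emph{subgroup}, passes to $G_{K}/T_{K}$, and takes the maximal abelian variety quotient, leaving a (possibly non-smooth) unipotent \emph{kernel}. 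Second, and more seriously, the sheaf-Hom construction over $S$ only ever produces a quasi-compact model, so it can only handle wound groups whose dual is \emph{connected}; by Proposition \ref{0019} these are exactly the groups with no non-zero unirational subgroup. For Conjecture \ref{0012} \eqref{0047} you must also treat wound groups with \'etale dual---the unirational ones, e.g.\ Oesterl\'e's $(\Res_{K^{1/p^{n}}/K}\Gm)/\Gm$---whose N\'eron models are not quasi-compact and cannot arise from the global sheaf-Hom; the paper handles them separately (Proposition \ref{0004}) and then runs a d\'evissage along the connected-\'etale sequence of the dual. Your ``finite-generation property of the dual'' is not the operative dichotomy; connectedness of the dual is.

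Your plan for part \eqref{0053} would fail as stated. The group $(\Res_{K(a^{1/p})/K}\Gm)/\Gm$ \emph{does} admit a N\'eron model over $S$, namely $(\Res_{S'/S}\Gm)/\Gm$ for $S'$ the normalization of $S$ in $K(a^{1/p})$ (\cite[Section 10.1, Example 11]{BLR90}), so it is not a counterexample, and a one-dimensional form of $\Ga$ built from a single constant $a$ is too small to detect imperfection of the residue fields. Moreover, since local N\'eron models exist at every closed point, no obstruction can be concentrated at a single $s_{0}$; the failure must be a global finiteness phenomenon. The paper's counterexample is $\nu(r)_{K} = \Ker(C - W^{\ast})$ with $r > 1$, whose defining equation \eqref{0031} involves an entire $p$-basis $t_{1}, \dots, t_{r-1}, \pi$ of $\Order_{S,s}$. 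The mechanism is: the local N\'eron model at \emph{every} closed point $s$ has special fiber $\nu(r-1)_{s} \times \Ga^{p^{r-1}(p-1)}$, which contains the non-zero wound group $\nu(r-1)_{s}$ precisely because $k(s)$ is imperfect; these fibers are connected, so a global N\'eron model would have connected fibers, hence be quasi-compact and of finite type, hence agree by spreading out with the naive model $\nu(r)_{S}$ over a dense open $U$---but $\nu(r)_{S}$ has special fibers $\Ga^{p^{r}-1}$, giving a contradiction at the infinitely many closed points of $U$. Your proposal does not identify this connectedness-implies-finite-type step, which is exactly what converts the fiberwise discrepancy into a genuine global obstruction.
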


See Theorems \ref{0009} and \ref{0013}
for Statement \eqref{0052}
and Theorem \ref{0032} and Proposition \ref{0061}
for Statement \eqref{0053}.

In particular, the conjecture is true
for a proper smooth curve $S$ over a finite field $\F_{q}$.
Hence we have the quasi-compact N\'eron model $G$
of a connected wound unipotent group $G_{K}$
over a function field $K$ over $\F_{q}$
with no non-zero unirational subgroup.
As an application of Statement \eqref{0052},
we can now apply Oesterl\'e's results
in \cite[Chapter VI, Sections 2 and 6]{Oes84} to $G$,
which show that $G_{K}(K)$ is finite%
\footnote{
	Rosengarten has informed the authors that
	he also has obtained the the finiteness of $G_{K}(K)$
	by different methods (unpublished).
}
(\cite[Chapter VI, Question 3.5]{Oes84})
and express the Tamagawa number of $G_{K}$
in terms of the coherent Euler characteristic of the Lie algebra of the N\'eron model $G$
(\cite[Chapter VI, Corollary 6.5]{Oes84}).

By Statement \eqref{0053},
Jacobians stand out as those having N\'eron models.

Conjecture \ref{0012} \eqref{0048} says nothing about
wound unipotent unirational groups,
for which we obtain the following classification
``up to relative perfection''
in the perfect residue field case.
To explain this, following Kato \cite{Kat86},
we say that a scheme $X$
over positive characteristic $K$ is
\emph{relatively perfect}
if the relative Frobenius morphism
$X \to X^{(p)}$ over $K$ is an isomorphism.
If $[K : K^{p}] < \infty$, the inclusion functor
from the category of relatively perfect $K$-schemes
to all $K$-schemes admits a right adjoint,
called the \emph{relative perfection} functor
(\cite[Definition 1.8]{Kat86}).

\begin{theorem}[{Theorem \ref{0049}}] \label{0051}
	Let $K$ be a field of characteristic $p > 0$
	with $[K : K^{p}] = p$.
	Then the category of relative perfections of
	wound unipotent unirational groups over $K$
	with $K$-group scheme morphisms is
	canonically equivalent to
	the category of $p$-primary finite \'etale
	group schemes over $K$.
	Under this equivalence,
	Oesterl\'e's example $(\Res_{K^{1 / p^{n}} / K} \Gm) / \Gm$
	for each $n \ge 1$ corresponds to $\Z / p^{n} \Z$.
\end{theorem}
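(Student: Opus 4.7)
The plan is to build the stated equivalence via the duality for relatively perfect unipotent groups advertised in the abstract as the paper's key technical input. Write $(-)^{\mathrm{rp}}$ for the relative perfection functor, which exists on $K$-schemes because $[K : K^{p}] = p < \infty$, and let $D$ denote the Pontryagin-like duality functor on relatively perfect commutative unipotent $K$-group schemes furnished by the paper.

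\textbf{Step 1 (the forward functor).} For a wound unipotent unirational group $G$ over $K$, I would show that $G^{\mathrm{rp}}$ lies in the domain of $D$ and that $D(G^{\mathrm{rp}})$ is a $p$-primary finite \'etale group scheme. Unirationality (together with $[K:K^{p}] = p$, which forces the only purely inseparable finite extensions of $K$ to be the $K^{1/p^{n}}$) should bound the ``size'' of $G$, yielding finiteness of the dual; windedness prevents any $\Ga$-factor and so forces $D(G^{\mathrm{rp}})$ to be \'etale and $p$-primary.

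\textbf{Step 2 (quasi-inverse and verification of the example).} For $E = \Z/p^{n}\Z$, I propose taking the relative perfection $U_{n}^{\mathrm{rp}}$ of $U_{n} := (\Res_{K^{1/p^{n}}/K} \Gm)/\Gm$ as its pre-image under $D$. To establish this, apply $(-)^{\mathrm{rp}}$ and then $D$ to the defining sequence
\begin{equation*}
    0 \to \Gm \to \Res_{K^{1/p^{n}}/K} \Gm \to U_{n} \to 0;
\end{equation*}
the Weil-restriction adjunction $\Hom(\Res_{K^{1/p^{n}}/K}\Gm, -) \cong \Hom(\Gm, (-)_{K^{1/p^{n}}})$ computes $D$ on the middle term, and the resulting long exact sequence should pin down $D(U_{n}^{\mathrm{rp}}) \cong \Z/p^{n}\Z$. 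For general $E$, I would descend along a finite Galois extension $L/K$ trivialising $E$, using that the whole construction is Galois-equivariant.

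\textbf{Step 3 (full faithfulness and essential surjectivity).} Essential surjectivity reduces, via Galois descent, to the case $E = \Z/p^{n}\Z$ handled in Step 2. Fully faithfulness amounts to matching $\Hom$- and $\Ext^{1}$-groups between relative perfections of wound unipotent unirational $K$-groups with those between $p$-primary finite \'etale $K$-groups, which should follow directly from the duality theorem once the source and target categories have been correctly pinned down.

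The main obstacle, in my view, is the structural input underpinning Step 1: proving that the relative perfection of an arbitrary wound unipotent unirational group really lies in the (small) subcategory on which $D$ lands in \emph{finite} \'etale groups, i.e.\ showing that every such $G$ is, up to relative perfection, a successive extension of twists of the $U_{n}$. This relies sharply on the hypothesis $[K:K^{p}] = p$, which rigidly controls the purely inseparable extensions of $K$ and hence the Weil restrictions available as building blocks; I would expect the bulk of the work to go into this classification and the attendant check that relative perfection is exact on the groups under consideration.
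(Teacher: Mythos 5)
Your overall architecture coincides with the paper's: the equivalence is the duality functor $\sheafhom_{K_{\RP}}(\;\cdot\;,\nu_{\infty}(1)_{K}^{\RP})$ restricted to relative perfections of wound unipotent unirational groups (composed with Pontryagin duality $\sheafhom_{K_{\et}}(\;\cdot\;,\Q_{p}/\Z_{p})$ on $p$-primary finite \'etale groups to fix the variance, a point you do not address), and the entire content of the theorem is the assertion that a wound unipotent group is unirational if and only if its dual is \'etale (Proposition \ref{0007}). That assertion is precisely what your Step 1 leaves as ``the main obstacle'', and the heuristics offered in its place do not amount to a proof; in particular, attributing \'etaleness of the dual to woundness is off the mark --- woundness is what makes the dual exist as a wound unipotent group at all, and a wound group with \emph{no} nonzero unirational subgroup has \emph{connected} dual (Proposition \ref{0019}). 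The paper's actual resolution runs: (i) Achet's theorem that $\Ext^{1}_{K}(G,\Gm)$ is finite for unirational $G$; (ii) the comparison $\Ext^{1}_{K}(G,\Gm)\cong\Ext^{1}_{K_{\RP}}(G^{\RP},\Gm^{\RP})$, which is genuinely non-trivial because relative perfection is built from the Weil restrictions $G^{(1/p^{n})}$, so one must show that the kernel $N$ of $G^{(1/p)}\onto G$ satisfies $\Hom_{K}(N,\Gm)=\Ext^{1}_{K}(N,\Gm)=0$, using the determination $\End_{K}(N_{0})\cong K^{1/p}$ and Rosengarten's computation $\Ext^{2}_{K}(\Ga,\Gm)\cong\Omega^{1}_{K}$; (iii) density of $H_{K}(K)$ in the dual $H_{K}$, which upgrades finiteness of the group of rational points to \'etaleness of $H_{K}$; and (iv) for the converse, the identification $\nu_{n}(1)_{K}^{\RP}\cong(\Gm^{(1/p^{n})}/\Gm)^{\RP}$, exhibiting the groups with dual $\Z/p^{n}\Z$ as quotients of the rational group $\Gm^{(1/p^{n})}$. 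Items (ii)--(iv) are absent from your proposal, and (ii) is where most of the work lies; no classification of unirational groups as iterated extensions of twists of the $U_{n}$ is needed once duality is in place.

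A secondary but concrete error: in Step 2 you invoke an adjunction $\Hom(\Res_{K^{1/p^{n}}/K}\Gm,\;\cdot\;)\cong\Hom(\Gm,(\;\cdot\;)_{K^{1/p^{n}}})$. Weil restriction along $K^{1/p^{n}}/K$ is the \emph{right} adjoint of base change, i.e.\ $\Hom_{K}(X,\Res_{K^{1/p^{n}}/K}Y)\cong\Hom_{K^{1/p^{n}}}(X_{K^{1/p^{n}}},Y)$, and since $K^{1/p^{n}}/K$ is purely inseparable rather than \'etale there is no left-adjoint identity of the shape you use; your computation of the dual of $U_{n}^{\RP}$ therefore does not go through as written. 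The paper instead reads off the correspondence $\nu_{n}(1)_{K}^{\RP}\leftrightarrow\Z/p^{n}\Z$ directly from the exact sequence $0\to\Gm^{\RP}\stackrel{p^{n}}{\to}\Gm^{\RP}\to\nu_{n}(1)_{K}^{\RP}\to 0$ together with Kato's duality theorem.
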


During the proof, we also prove that for such $K$,
an extension of a unipotent wound unirational group
by a unipotent wound unirational group is unirational
(Proposition \ref{0007}),
partially answering a question of Achet
\cite[Question 4.9]{Ach19a}.
The authors have been informed that this final statement
has also been obtained by Rosengarten.
As an application of Theorem \ref{0051},
we show that the converse to
Conjecture \ref{0012} \eqref{0048}
holds for such $K$
when $S$ has infinitely many points
(Proposition \ref{0064}).
A similar statement is proved
in \cite[Section 10.3, Theorem 5 (b)]{BLR90}
for $S$ a normal curve over a field.


\subsection{Outline of proof}

Throughout,
we heavily rely on relatively perfect
unipotent group schemes and their duality theory
as developed by Kato \cite{Kat86},
Bertapelle and the second author \cite[Section 8]{BS24}
and the second author \cite[Section 3]{Suz24}.

For Theorem \ref{0050} \eqref{0052},
it is relatively straightforward to prove that
the ``dual'' of a connected wound
unipotent group over $K$ killed by $p$
admits a quasi-compact N\'eron model
(Proposition \ref{0003}).
By double duality (\cite[Proposition 3.5]{Suz24}),
this shows that a wound unipotent group
with connected dual killed by $p$
has a quasi-compact N\'eron model.
For these to make sense,
we need to review relatively
perfect schemes in Section \ref{0054},
give general results on them in Section \ref{0055},
define relatively perfect versions of
N\'eron models in Section \ref{0056}
and show that
the existence of a N\'eron model is equivalent to
the existence of a relatively perfect N\'eron model
(Propositions \ref{0062} and \ref{0000}).
Therefore we may pass to relative perfection.
Wound unipotent groups with \'etale dual,
on the other hand, are treated separately
(Proposition \ref{0004}).
It turns out that a wound unipotent group has
no non-zero unirational subgroup
if and only if its dual is connected
(Proposition \ref{0019}).
This needs Achet's result
\cite[Proposition 2.5]{Ach19b}
on coverings of unirational groups
by groups of the form
$(\Res_{K^{1 / p^{n}} / K} \Gm) / \Gm$.
The rest is a d\'evissage
(Sections \ref{0057} and \ref{0058}).

The ``dual'' in the above sense is given by
the sheaf-Hom to the logarithmic Hodge-Witt sheaf
$\nu_{n}(r)$, where $[K : K^{p}] = p^{r}$.
This target sheaf $\nu_{n}(r)$ is isomorphic to
$(\Res_{K^{1 / p^{n}} / K} \Gm) / \Gm$
if $S$ has perfect residue fields or,
equivalently, $r = 1$.
For Theorem \ref{0050} \eqref{0053},
it turns out that $\nu(r) = \nu_{1}(r)$
with $r > 1$ gives a desired counterexample
to Conjecture \ref{0012} \eqref{0047}.
The point is that
special fibers at \emph{all} closed points $s$ of
local N\'eron models of $\nu(r)_{K}$ have
wound quotients given by $\nu(r - 1)_{s}$
while those of $\nu(r)_{S}$ are
a product of copies of $\Ga$.

For Theorem \ref{0051},
we will prove that wound unipotent
unirational groups over $K$
are precisely those with \'etale dual
(Proposition \ref{0007}).
Since the dual is given by sheaf-Hom
to $\nu_{n}(1)$ (where $r = 1$)
or sheaf-$\Ext^{1}$ to $\Gm$,
this is basically Achet's result
\cite[Theorem 2.8]{Ach19b}
on finiteness of $\Ext^{1}_{K}(G_{K}, \Gm)$
for unirational $G_{K}$.
However, passing to duals requires
passing to relative perfections,
which are built from Weil restrictions
$\Res_{K / K} G_{K}$
along the absolute Frobenius morphism
$\Spec K \to \Spec K$.
It is then non-trivial how unirationality behaves
under Weil restriction $\Res_{K / K}$.
In Section \ref{0059},
we study the kernel of the natural morphism
from $\Res_{K / K} G_{K}$ to $G_{K}$.
Together with Rosengarten's result
$\Ext^{2}_{K}(\Ga, \Gm) \cong \Omega^{1}_{K}$
(\cite[Proposition 2.7.6]{Ros23}),
we can now pass to the relative perfection
(Proposition \ref{0045}),
which proves the theorem.


\subsection{Conventions}

All group schemes in this paper are
assumed commutative.
An algebraic group over a field means
a quasi-compact smooth group scheme,
not necessarily connected.

\begin{acknowledgments}
	The authors thank Zev Rosengarten
	for informing them of his work on
    the extension problem for unirational groups
    and Iku Nakamura
    for giving comments for an earlier version of the paper.
    The authors are also grateful to the referees
    for the detailed comments,
    which, in particular, pointed out a gap in a reduction step
    for the connected dual case
    of the proof of Proposition \ref{0006}
    in an earlier manuscript of this paper.
    The first author's research was conducted
    in the framework of the research training group
    \emph{GRK 2240: Algebro-Geometric Methods in Algebra,
    Arithmetic and Topology}.
\end{acknowledgments}


\section{Review of relatively perfect schemes}
\label{0054}

Let $p$ be a prime number.
Let $S$ be an excellent irreducible quasi-compact
regular $\F_{p}$-scheme of dimension $\le 1$
whose residue fields $k(s)$ at closed points $s$
have finite $[k(s) : k(s)^{p}]$.
Let $K$ be the function field of $S$.
Then $[k(s) : k(s)^{p}]$ is independent
of the choice of $s$.
Write it as $p^{r - 1}$ with $r \ge 1$
if $S \ne \Spec K$
and as $p^{r}$ if $S = \Spec K$.
Then the scheme $S$ Zariski-locally admits
a $p$-basis consisting of $r$ elements.
In particular, $[K : K^{p}] = p^{r}$.
Hence the assumptions \cite[(1.4), (3.1.1)]{Kat86}
are satisfied by $S$ and $K$.
See \cite[Section 2.1]{Ove25} for some of these facts.

Recall from \cite[Section 1]{Kat86}
and \cite[Sections 2]{BS24}
that an $S$-scheme $X$ is said to be
\emph{relatively perfect}
if its relative Frobenius morphism
$X \to X^{(p)}$ over $S$ is an isomorphism.
The inclusion functor from
the category of relatively perfect $S$-schemes
into the category of all $S$-schemes
admits a right adjoint,
called the relative perfection functor and denoted by
$X \mapsto X^{\RP}$.
We have $X^{\RP}(S) \cong X(S)$.

We recall the construction of
the relative perfection functor.
Let $n \ge 0$ be an integer.
Let $X^{(p^{n})}$ be the base change of $X$
along the $p^{n}$-th power
absolute Frobenius morphism $S \to S$.
Let $X^{(1 / p^{n})}$ be the Weil restriction of $X$
along the $p^{n}$-th power
absolute Frobenius morphism $S \to S$.
The functor $X \mapsto X^{(1 / p^{n})}$ is
right adjoint to the functor $X \mapsto X^{(p^{n})}$
as endofunctors on the category of $S$-schemes.
We have the relative Frobenius morphism
	\[
			X^{(1 / p^{n + 1})}
		\to
			(X^{(1 / p^{n + 1})})^{(p)}
	\]
of $X^{(1 / p^{n + 1})}$ over $S$.
The counit of adjunction gives a morphism
	\[
			(X^{(1 / p^{n + 1})})^{(p)}
		\cong
			((X^{(1 / p^{n})})^{(1 / p)})^{(p)}
		\to
			X^{(1 / p^{n})}.
	\]
Their composite defines a morphism
	\begin{equation} \label{0036}
			X^{(1 / p^{n + 1})}
		\to
			X^{(1 / p^{n})}.
	\end{equation}
Now $X^{\RP}$ is given by
the inverse limit of the system
$\{X^{(1 / p^{n})}\}_{n \ge 0}$.

An $S$-scheme $X$ is said to be
\emph{locally relatively perfectly of finite presentation}
if Zariski-locally on $S$ and $X$,
it is the relative perfection
of an $S$-scheme of finite presentation.
If moreover $X$ is quasi-compact and quasi-separated,
we say that $X$ is
\emph{relatively perfectly of finite presentation}.
An $S$-scheme $X$ is said to be
\emph{relatively perfectly smooth}
if Zariski-locally on $S$ and $X$,
it is the relative perfection of a smooth $S$-scheme.
When $S$ is a point,
these notions have been studied
in \cite[Section 8]{BS24} in detail.

Let $S_{\RP}$ be the category
of relatively perfect $S$-schemes
with $S$-scheme morphisms endowed
with the \'etale topology,
which is a Grothendieck site.
Let $\Ab(S_{\RP})$ be the category of sheaves
of abelian groups on $S_{\RP}$.
Let $\Hom_{S_{\RP}}$ and $\sheafhom_{S_{\RP}}$ be
the Hom and sheaf-Hom functors,
respectively, for $\Ab(S_{\RP})$.
When $S = \Spec K$, we also use the notation
$\Ab(K_{\RP})$, $\Hom_{K_{\RP}}$
and $\sheafhom_{K_{\RP}}$.
Let $S_{\RPS}$ be the category of
relatively perfectly smooth $S$-schemes
with $S$-scheme morphisms
endowed with the \'etale topology.
It is a site since an \'etale scheme
over a relatively perfectly smooth $S$-scheme
is again a relatively perfectly smooth $S$-scheme.
We similarly have $\Ab(S_{\RPS})$,
$\Hom_{S_{\RPS}}$ and $\sheafhom_{S_{\RPS}}$
and their versions for $K$ in place of $S$.

For integers $n \ge 1$ and $q \ge 0$,
let
	$
			\nu_{n}(q)_{S}^{\RP}
		=
			W_{n} \Omega_{S, \log}^{q, \RP}
		\in
			\Ab(S_{\RP})
	$
be the dlog part of the Hodge-Witt sheaf
(\cite[Section (4.1)]{Kat86}).
Set
$\nu_{\infty}(q)_{S}^{\RP} = \dirlim_{n} \nu_{n}(q)_{S}^{\RP}$
and $\nu(q)_{S}^{\RP} = \nu_{1}(q)_{S}^{\RP}$.
By the proof of \cite[Proposition 6.4]{Kat86},
we have canonical exact sequences
	\begin{equation} \label{0015}
		\begin{gathered}
					0
				\to
					\Gm^{\RP}
				\stackrel{p^{n}}{\to}
					\Gm^{\RP}
				\to
					\nu_{n}(1)_{S}^{\RP}
				\to
					0
			\\
					0
				\to
					\Gm^{\RP}
				\to
					\Gm^{\RP} \tensor_{\Z} \Z[1 / p]
				\to
					\nu_{\infty}(1)_{S}^{\RP}
				\to
					0
		\end{gathered}
	\end{equation}
in $\Ab(S_{\RP})$.

The relative perfections of
quasi-compact smooth group schemes over $K$
are studied in \cite[Section 8]{BS24} in detail.
In particular, they form
an abelian subcategory of $\Ab(K_{\RP})$
closed under extensions by \cite[Proposition 8.12]{BS24}.
A wound unipotent group over $K$ is
a quasi-compact smooth unipotent group scheme
(not necessarily connected)
with no non-zero morphism from $\Ga$.
The relative perfection of
a wound unipotent group is called
a relatively perfect wound unipotent group
(\cite[Section 3]{Suz24}).
By the paragraph after \cite[Definition 4.2.3]{Kat86},
the sheaf $\nu_{n}(q)_{K}^{\RP}$ for any $n$ and $q$ belongs to $D_{0}(K_{\RP})$,
the smallest full triangulated subcategory containing $\Ga^{\RP}$
of the derived category of $\Ab(K_{\RP})$.
Hence by \cite[Proposition 3.2]{Suz24},
the sheaf $\nu_{n}(q)_{K}^{\RP}$ is the relative perfection
of a quasi-compact smooth unipotent group scheme over $K$.
It is wound by \cite[Proposition 3.4]{Suz24},
thus a relatively perfect wound unipotent group.

Let $G_{K}$ be a relatively perfect
wound unipotent group over $K$.
Then by \cite[Proposition 3.5]{Suz24},
the sheaf
	\[
			H_{K}
		=
			\sheafhom_{K_{\RP}}(
				G_{K},
				\nu_{\infty}(r)_{K}^{\RP}
			)
	\]
is representable by a relatively perfect
wound unipotent group.
We call it the \emph{dual} of $G_{K}$.
By \cite[Proposition 3.5]{Suz24},
the dual of $H_{K}$ recovers $G_{K}$.
If $G_{K}$ is a wound unipotent group
(in the usual sense, not relatively perfect),
then its dual is defined to be the dual of $G_{K}^{\RP}$.


\section{General results on relatively perfect schemes}
\label{0055}

The category of $K$-schemes
relatively perfectly of finite presentation
is a ``limit'' of the categories of schemes
relatively perfectly of finite presentation
over dense open subschemes of $S$:

\begin{proposition} \label{0022} \mbox{}
	\begin{enumerate}
		\item \label{0023}
			For any $K$-scheme $X_{K}$
			relatively perfectly of finite presentation,
			there exist a dense open subscheme $T \subset S$
			and a $T$-scheme $X_{T}$
			relatively perfectly of finite presentation
			such that $X_{K} \cong X_{T} \times_{T} K$
			as schemes over $K$.
		\item \label{0024}
			Given $S$-schemes $X$ and $Y$
			relatively perfectly of finite presentation
			and a morphism
			$\varphi_{K} \colon X \times_{S} K \to Y \times_{S} K$
			over $K$,
			there exist a dense open subscheme $T \subset S$
			and a morphism
			$\varphi_{T} \colon X \times_{S} T \to Y \times_{S} T$
			whose base change to $K$ is $\varphi_{K}$.
		\item \label{0025}
			Given $S$-schemes $X$ and $Y$
			relatively perfectly of finite presentation
			and a pair of morphisms $\varphi, \psi \colon X \to Y$
			whose base changes
			$\varphi \times_{S} K$ and $\psi \times_{S} K$
			are equal,
			there exists a dense open subscheme $T \subset S$
			such that $\varphi \times_{S} T$ and $\psi \times_{S} T$
			are equal.
	\end{enumerate}
\end{proposition}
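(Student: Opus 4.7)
The plan is to deduce all three statements from Grothendieck's spreading-out formalism (EGA IV, §8), once we describe the relative perfection $Y^{\RP}$ Zariski-locally as a well-behaved affine filtered inverse limit. By definition, an $S$-scheme relatively perfectly of finite presentation admits a finite Zariski cover (on $S$ and on itself) by pieces of the form $Y^{\RP}$ with $Y$ affine and of finite presentation over a suitable affine open of $S$. Since $S$ is regular and $F$-finite, Kunz's theorem makes each absolute Frobenius $F^n \colon S \to S$ finite locally free, so every $Y^{(1/p^n)} = \Res_{F^n} Y$ is affine and finitely presented over $S$ with affine transition maps, and Weil restriction along $F^n$ commutes with arbitrary base change on $S$. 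Hence $Y^{\RP} = \invlim_n Y^{(1/p^n)}$ is an affine filtered limit of finitely presented $S$-schemes: base-change to $K$ commutes with this limit, and any morphism from $Y^{\RP}$ into a finitely presented $S$-scheme factors through some finite stage $Y^{(1/p^m)}$ (EGA IV, 8.14.2).

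I would first prove (3). After covering $X$ and $Y$ by affine charts of the above form and using $\varphi_K = \psi_K$ to refine them, we reduce to the situation $X = Y_1^{\RP}$, $Y = Y_2^{\RP}$ with both $Y_1, Y_2$ affine and finitely presented. By adjunction, $\varphi$ and $\psi$ are encoded by morphisms $X \to Y_2$; by the factorization property above, after enlarging $m$, both come from morphisms $Y_1^{(1/p^m)} \to Y_2$ of finitely presented $S$-schemes that agree on the generic fibre. The classical spreading-out (EGA IV, 8.10.5) then gives equality on some dense open, and intersecting the finitely many dense opens produced by the cover yields the required $T$. For (2), cover $X_K$ and $Y_K$ similarly and refine so that each restricted $\varphi_K$ is a morphism $Y_{1,K}^{\RP} \to Y_{2,K}^{\RP}$; by adjunction and 8.14.2 it factors through some $Y_{1,K}^{(1/p^m)} \to Y_{2,K}$, which by 8.8.2 spreads to a morphism of finitely presented schemes over a dense open $T$. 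Applying $(\cdot)^{\RP}$ produces a morphism $Y_{1,T}^{\RP} \to Y_{2,T}^{\RP}$ with the correct generic fibre (by commutation of relative perfection with the localization $\Spec K \to T$), and (3) ensures that these local morphisms glue after further shrinking $T$.

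Finally, for (1), cover $X_K$ by finitely many affine opens $V_{i,K}^{\RP}$ with $V_{i,K}$ affine finitely presented over $K$, spread each $V_{i,K}$ to $V_{i,T}$ over a common dense open (EGA IV, 8.8.2), and spread the gluing data (the open immersions and their inverses on double overlaps) using (2); being an open immersion is an open condition on $T$, and the cocycle identity is an equality of morphisms, so both persist after further shrinking $T$ by (3). Gluing the $V_{i,T}^{\RP}$ produces the desired $X_T$. The main obstacle throughout is simply the verification of the two compatibilities that distinguish relatively perfect schemes from ordinary finitely presented ones: that $(\cdot)^{\RP}$ commutes with localization to the generic point and that morphisms out of $Y^{\RP}$ into finitely presented targets factor through some $Y^{(1/p^m)}$. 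Both are consequences of the Kunz flatness of Frobenius on $S$ together with the affine-limit presentation of $Y^{\RP}$, and once they are in place the remainder is a routine invocation of EGA IV, §8.
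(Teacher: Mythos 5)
Your proposal is correct and follows essentially the same route as the paper: the paper likewise reduces to affine charts by the gluing argument of \cite[Tag 01ZM]{Sta23} and then invokes the standard limit/spreading-out characterization of finite presentation (\cite[Tag 05N9]{Sta23}), which is exactly your combination of the affine filtered-limit presentation $Y^{\RP} = \invlim_n Y^{(1/p^n)}$ with EGA IV, \S 8. The two auxiliary facts you isolate (commutation of relative perfection with passage to the generic fibre, and factorization of morphisms out of $Y^{\RP}$ through a finite stage) are precisely what makes that reduction work, and both hold for the reasons you give.
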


(Here the notation $X_{T} \times_{T} K$ means
$X_{T} \times_{T} \Spec K$.
Similar such notations have been/will be used throughout.)

\begin{proof}
	By the argument of the proof of \cite[Tag 01ZM]{Sta26},
	all the statements are reduced to the case
	where $S$ is affine and the schemes are
	relative perfections of affine schemes of finite presentation.
	In this case, the statements are further reduced to
	the usual statements for algebras of finite presentation,
	which are \cite[Tag 05N9]{Sta26}.
\end{proof}

In Proposition \ref{0026} below,
we will construct the ``relative identity component''
of a relatively perfectly smooth group scheme over $S$.
During the proof of its quasi-compactness,
we will need the following technical result,
just as in the case of group schemes over a field
(see the proof of \cite[Tag 0B7P]{Sta26}):

\begin{proposition} \label{0020}
	Let $X$ be a relatively perfectly smooth $S$-scheme.
	Then the structure morphism
	$X \to S$ is universally open.
\end{proposition}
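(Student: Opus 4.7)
The plan is to reduce to a local statement, realize $X \to S$ as a cofiltered inverse limit of smooth $S$-schemes with surjective affine transition maps, and transfer openness from the smooth pieces to the limit. Since universal openness is local on both source and target, I may assume $X = Y^{\RP}$ for some smooth affine $S$-scheme $Y$, so that $X = \varprojlim_{n \ge 0} Y^{(1/p^{n})}$ by the construction of relative perfection recalled in Section \ref{0054}. Under the standing hypotheses on $S$, the absolute Frobenius $F_{S}$ is finite locally free, and Weil restriction along a finite locally free morphism preserves smoothness (\cite[Section 7.6, Proposition 5]{BLR90}); hence each $Y^{(1/p^{n})}$ is smooth over $S$, and in particular the structure map $Y^{(1/p^{n})} \to S$ is universally open.

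Next I would analyze the transition map $Y^{(1/p^{n+1})} \to Y^{(1/p^{n})}$, which by its very definition \eqref{0036} factors as the relative Frobenius of $Y^{(1/p^{n+1})}$ --- a finite flat surjection since $Y^{(1/p^{n+1})}$ is smooth over the $\F_{p}$-scheme $S$ --- followed by the Weil restriction counit $(Y^{(1/p^{n+1})})^{(p)} \to Y^{(1/p^{n})}$. The central technical step is to verify surjectivity of this counit onto the smooth target; I expect this to follow on geometric points from the fact that sections of a smooth morphism extend from closed points to \'etale neighborhoods, combined with the explicit description of Weil restriction along the finite locally free $F_{S}$. Granting surjectivity of all transition maps, the standard inverse-limit argument for qcqs schemes with affine surjective transition maps yields that each projection $X \to Y^{(1/p^{n})}$ is itself surjective.

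To finish, for an arbitrary base change $T \to S$ fiber products commute with the cofiltered limit, so $X \times_{S} T = \varprojlim_{n}(Y^{(1/p^{n})} \times_{S} T)$ inherits smoothness of terms, affineness and surjectivity of transition maps, and surjectivity of projections. Any quasi-compact open $U \subset X \times_{S} T$ descends to a quasi-compact open $U_{n} \subset Y^{(1/p^{n})} \times_{S} T$ for some $n$, and by surjectivity of the projection its image in $T$ coincides with that of $U_{n}$, which is open by smoothness of $Y^{(1/p^{n})} \times_{S} T \to T$; arbitrary opens are unions of quasi-compact ones and have correspondingly open images. The hard part of the program is the surjectivity of the Weil-restriction counit onto a smooth target; once that is secured, the remainder is a formal manipulation of inverse limits of qcqs schemes with affine transition maps.
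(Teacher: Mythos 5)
Your overall strategy is viable and close in spirit to the paper's, but it diverges at the key technical point, and the step you leave open is precisely the one the paper's argument is designed to avoid. The paper does not work with a general smooth affine $Y$: it first uses the fact that $Y$ is \'etale-locally \'etale over an affine space $\Affine_{S}^{n}$ together with Kato's compatibility of relative perfection with \'etale morphisms (\cite[Corollary 1.9]{Kat86}) to reduce the whole statement to $(\Affine_{S}^{n})^{\RP} \to S$. For affine space the transition morphisms in \eqref{0036} are completely explicit --- the counit $((\Affine_{S}^{n})^{(1/p)})^{(p)} \to \Affine_{S}^{n}$ is a linear projection of affine spaces --- so they are faithfully flat affine of finite presentation, and the paper then quotes the general fact that the projection from a cofiltered limit along such transition maps is universally open. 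Your version trades flatness of the transition maps for smoothness of each level $Y^{(1/p^{n})} \to S$ plus surjectivity of the projections, which is a legitimate rearrangement: the final limit manipulation you describe (descend a quasi-compact open to some finite level, use surjectivity of the projection to identify images in $T$, use openness of the smooth level over $T$) is correct and is essentially what underlies the cited Stacks tags.

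The one genuine gap is the surjectivity of the counit $(Z^{(1/p)})^{(p)} \to Z$ for a general smooth $Z$, which you flag but do not prove. Your proposed route via geometric points and infinitesimal lifting does work, but it is not automatic: you must check that for an algebraically closed field $\Omega$ over a point $s \in S$, the ring $\Omega \tensor_{\Order_{S,s}, F} \Order_{S,s}$ is Artin local with residue field $\Omega$ (this uses the standing hypotheses that $S$ is regular of dimension $\le 1$ with finite $p$-bases, and the perfectness of $\Omega$), after which smoothness of $Z$ lifts the point through the nilpotent thickening. The more economical fix is simply the paper's: since surjectivity (and universal openness) of $X \to S$ may be checked after the \'etale-local reduction, invoke \cite[Corollary 1.9]{Kat86} to pull everything back from $\Affine_{S}^{n}$, where the counit is visibly a surjective linear projection. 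Either way the statement is saved; as written, your proof is incomplete at exactly the point you identify as the hard part.
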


\begin{proof}
	We may assume that $S$ is affine and
	$X$ is the relative perfection
	of an $S$-scheme $X_{0}$ that is \'etale
	over some affine space $\Affine_{S}^{n}$ over $S$.
	Since relative perfection commutes with \'etale morphisms
	by \cite[Corollary 1.9]{Kat86},
	it is enough to show that
	$(\Affine_{S}^{n})^{\RP} \to S$ is
	universally open.
	The inverse system
	$\{(\Affine_{S}^{n})^{(1 / p^{m})}\}_{m \ge 0}$
	has faithfully flat transition morphisms.
	Now use the fact
	(which follows from
	\cite[Tags 01Z4 (1), 05F3 and 01UA]{Sta26})
	that for a filtered inverse system
	$\{Y_{\lambda}\}_{\lambda \in \Lambda}$
	of quasi-compact quasi-separated schemes
	whose transition morphisms
	$Y_{\mu} \to Y_{\lambda}$ are
	faithfully flat affine of finite presentation,
	the morphism
	$\invlim_{\mu} Y_{\mu} \to Y_{\lambda}$
	is universally open for any $\lambda \in \Lambda$.
\end{proof}

As mentioned in the proof above,
the relative perfection functor commutes
with \'etale base change.
But it does not in general with closed immersions.
Hence special fibers of relative perfections
of schemes over $S$ can be messy.
However, if we only care about
the irreducibility of the fibers
or the set of irreducible components,
relative perfection does not change the result,
at least for smooth schemes over $S$:

\begin{proposition} \label{0021}
	Let $X$ be a smooth $S$-scheme.
	Let $s \in S$ be a closed point.
	Then the fiber
	$(X^{\RP})_{s} = X^{\RP} \times_{S} s$ has
	open irreducible components and the morphism
	$(X^{\RP})_{s} \to X_{s}$ induces a bijection
	on the sets of irreducible components.
\end{proposition}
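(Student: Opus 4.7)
The plan is to reduce Zariski-locally on $X$ to the case where $X$ is étale over an affine space $\Affine_S^d$, which is possible by smoothness of $X/S$. Using [Kat86, Corollary~1.9] that relative perfection commutes with étale base change, this gives $X^{\RP} = X \times_{\Affine_S^d} (\Affine_S^d)^{\RP}$, and hence on fibers
\[
(X^{\RP})_s = X_s \times_{\Affine_{k(s)}^d} W, \qquad W := ((\Affine_S^d)^{\RP})_s.
\]
Since irreducible components and their openness are Zariski-local, it suffices to analyze this setup.

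For the base case $X = \Affine_S^d$, work locally so that $\mathcal{O}_S$ admits a $p$-basis $t_1, \ldots, t_r$. Freeness of $\mathcal{O}_S$ over its $p$-th powers with basis $\{t^I\}_{I \in [0, p-1]^r}$ gives $(\Affine_S^d)^{(1/p^n)} \cong \Affine_S^{dp^{nr}}$. Base-changing to $s$ and passing to the inverse limit, $W = \Spec(\dirlim_n R_n)$ where each $R_n$ is a polynomial ring over $k(s)$ and the transition maps are faithfully flat (following the proof of Proposition~\ref{0020}), hence injective. A filtered colimit of polynomial rings along injective ring maps is an integral normal domain, so $W$ is integral and normal with a single open irreducible component, giving the proposition for $X = \Affine_S^d$.

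For the general étale case, $X_s$ is normal (being étale over the normal $\Affine_{k(s)}^d$), so its irreducible components $X_{s, \beta}$ coincide with its connected components and are open. Since $W \to \Affine_{k(s)}^d$ is flat (a filtered inverse limit of faithfully flat morphisms), so is each pullback $X_{s, \beta} \times_{\Affine_{k(s)}^d} W \to X_{s, \beta}$. Applying the standard fact that a flat morphism over an irreducible base with geometrically irreducible generic fiber has irreducible total space, the irreducibility of each $X_{s, \beta} \times_{\Affine_{k(s)}^d} W$ reduces to showing that the generic fiber of $W \to \Affine_{k(s)}^d$ is geometrically irreducible --- equivalently, that $k(s)(x_1, \ldots, x_d)$ is separably algebraically closed in $\mathrm{Frac}(\dirlim_n R_n)$.

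The main technical step is this separable-closedness. Iterating the first-level transition formula expresses each $x_j$ at level $n$ as a polynomial in the level-$n$ generators of $R_n$ with coefficients in $k(s)$ in which every variable appears only as a $p^n$-th power (by additivity of Frobenius in characteristic $p$). Setting $U^{(n)}_{j, I} := (y^{(n)}_{j, I})^{p^n}$, the $x_j$ are therefore $k(s)$-linear combinations of the $U^{(n)}_{j, I}$. The subfield $L_n := k(s)(U^{(n)}_{j, I}) \subseteq \mathrm{Frac}(R_n)$ is purely transcendental over $k(s)$, and because the constant term ($I = 0$) of the $p$-basis expansion has coefficient $1$, one can solve the linear relations to further express $L_n$ as purely transcendental over $k(s)(x_1, \ldots, x_d)$; meanwhile $\mathrm{Frac}(R_n)/L_n$ is purely inseparable. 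Hence $k(s)(x_1, \ldots, x_d)$ is separably algebraically closed in $\mathrm{Frac}(R_n)$, a property inherited by $\mathrm{Frac}(\dirlim_n R_n) = k(W)$ since any separable algebraic element would already lie in some $\mathrm{Frac}(R_n)$. Combining everything, the open irreducible components of $(X^{\RP})_s$ are the $X_{s, \beta} \times_{\Affine_{k(s)}^d} W$, in bijection with the irreducible components of $X_s$ via the projection; the delicate aspect is executing the multivariable linear-algebra argument cleanly when residues of the $p$-basis elements in $k(s)$ are nonzero, so that the $x_j$ are genuine multivariable combinations rather than single $p^n$-th roots.
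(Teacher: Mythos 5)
Your proof is correct, but it takes a genuinely different route from the paper's. The paper never analyzes the full limit $W=((\Affine_S^d)^{\RP})_s$ directly: after the same reduction to $X$ \'etale over $\Affine_S^d$, it isolates the single transition morphism $X^{(1/p)}\to X$ and factors it as the relative Frobenius $X^{(1/p)}\to (X^{(1/p)})^{(p)}$ (a universal homeomorphism) followed by the counit $(X^{(1/p)})^{(p)}\to X$, which is the pullback of a linear projection of affine spaces and hence an affine-space fibration $\Affine_X^m\to X$; both maps visibly induce bijections on irreducible components, and the statement for $X^{\RP}$ follows by iterating and quoting generalities on filtered inverse limits of irreducible spaces. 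This sidesteps the two places where your argument has to do real work: the going-down/generic-fibre argument in the \'etale case, and the proof that $k(s)(x_1,\dots,x_d)$ is separably algebraically closed in $k(W)$. Your route does go through: the triangular substitution $U^{(n)}_{j,0}=x_j-\sum_{I\ne 0}\bar t^{\,I} U^{(n)}_{j,I}$ (using that the $I=0$ coefficient is $1$ and that the surviving monomials $\bar t^{\,I}$ come from the induced $p$-basis of $k(s)$, the terms involving the uniformizer dying in the fibre) makes $L_n$ purely transcendental over $k(s)(x_1,\dots,x_d)$ with $\mathrm{Frac}(R_n)/L_n$ purely inseparable, which is exactly what is needed, and the multivariable case you flag as delicate causes no genuine trouble. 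What your approach buys in exchange for the extra computation is finer information: the fibre of $(\Affine_S^d)^{\RP}$ at $s$ is integral and normal, and the components of $(X^{\RP})_s$ are identified explicitly as $X_{s,\beta}\times_{\Affine^d_{k(s)}}W$. One small point of rigor (shared with the paper's own reduction): irreducibility is not a Zariski-local property, so globalizing from the \'etale-over-$\Affine^d$ charts requires the usual remark that the preimage of each component $X_{s,\beta}$ is covered by pairwise-meeting irreducible open subsets.
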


\begin{proof}
	First note that $X^{(1 / p^{n})}$ is smooth over $S$ for any $n$
	by \cite[Section 7.6, Proposition 5 (h)]{BLR90}
	and that a smooth scheme over a field has
	open irreducible components by \cite[Tag 033N]{Sta26}.
	
	Now, to prove the proposition, it is enough to show that
	$(X^{(1 / p)})_{s} \to X_{s}$ induces a bijection
	on the sets of irreducible components.
	Indeed, this implies,
	by applying it to $X^{(1 / p^{n})}$ in place of $X$ for each $n$,
	that the same is true
	for $(X^{(1 / p^{n + 1})})_{s} \to (X^{(1 / p^{n})})_{s}$.
	From this, we obtain the proposition by taking the limit in $n$,
	since a disjoint open decomposition of $X_{s}$ induces
	a disjoint open decomposition of the limit $(X^{\RP})_{s}$
	and a filtered inverse limit of quasi-separated irreducible schemes
	with affine transition morphisms
	is irreducible.%
	\footnote{
		To see this last statement,
		let $Y = \invlim_{\lambda} Y_{\lambda}$ be such an inverse limit.
		The underlying topological space of $Y$ is the inverse limit of
		the underlying topological spaces of the $Y_{\lambda}$
		by \cite[Tag 0CUF]{Sta26}.
		Hence $Y$ has an open base given by $U_{\lambda} \times_{Y_{\lambda}} Y$,
		where $\lambda$ runs over all indices
		and $U_{\lambda}$ runs over all quasi-compact open subschemes of $Y_{\lambda}$
		with $U_{\lambda} \times_{Y_{\lambda}} Y \ne \emptyset$.
		It is enough to show that any two members
		$U_{\lambda} \times_{Y_{\lambda}} Y$ and $V_{\mu} \times_{Y_{\mu}} Y$
		of this base have a non-empty intersection.
		We may assume that $\lambda = \mu$.
		For any $\nu \ge \lambda$,
		the quasi-separatedness and the irreducibility of $Y_{\nu}$ imply that
			$
					(U_{\lambda} \cap V_{\lambda}) \times_{Y_{\lambda}} Y_{\nu}
				\cong
						(U_{\lambda} \times_{Y_{\lambda}} Y_{\nu})
					\cap
						(V_{\lambda} \times_{Y_{\lambda}} Y_{\nu})
			$
		is quasi-compact and non-empty.
		Hence by \cite[Tag 01Z2]{Sta26},
		we know that
			$
					(U_{\lambda} \cap V_{\lambda}) \times_{Y_{\lambda}} Y
				\cong
						(U_{\lambda} \times_{Y_{\lambda}} Y)
					\cap
						(V_{\lambda} \times_{Y_{\lambda}} Y)
			$
		is non-empty.
	}
	
	To prove this statement about $(X^{(1 / p)})_{s} \to X_{s}$,
	note that the statement is local on $X$ and a neighborhood of $s$,
	so we may assume that
	$X$ is \'etale over an affine space $\Affine_{S}^{n}$.
	The relative Frobenius
	$X^{(1 / p)} \to (X^{(1 / p)})^{(p)}$
	is a universal homeomorphism.
	By an explicit calculation,
	the counit
	$((\Affine_{S}^{n})^{(1 / p)})^{(p)} \to \Affine_{S}^{n}$
	is a linear projection of affine spaces.
	It follows that the counit $(X^{(1 / p)})^{(p)} \to X$ is
	isomorphic to the projection morphism
	$\Affine_{X}^{m} \to X$ for some $m$,
	which induces a bijection on the sets of irreducible components
	by \cite[Tag 037A or 038F]{Sta26}.
\end{proof}

Now we construct relative identity components:

\begin{proposition} \label{0026}
	Let $G$ be a relatively perfectly smooth
	group scheme over $S$.
	Then there exists a unique open subscheme $G^{0}$ of $G$
	whose fibers are the identity components
	of the fibers of $G$.
	The scheme $G^{0}$ is
	a quasi-compact group subscheme of $G$.
	We call $G^{0}$
	the \emph{relative identity component} of $G$.
\end{proposition}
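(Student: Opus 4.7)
The plan is to construct $G^0$ by identifying its expected underlying set and proving openness via a subgroup-generated-by-a-neighborhood argument, then deducing quasi-compactness.

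Uniqueness follows since an open subscheme of $G$ is determined by its underlying set. For each $s \in S$, $G_s$ is a relatively perfectly smooth group scheme over $k(s)$ and thus admits an open, irreducible identity component $G_s^{0} \subset G_s$ by the theory developed in \cite[Section 8]{BS24}. The desired underlying set is $|G^{0}| := \bigsqcup_{s \in S} |G_s^{0}|$.

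The essential step, and the main obstacle, is to find an open neighborhood $U$ of the identity section $e(S)$ in $G$ such that $U_s \subset G_s^{0}$ for every $s$. Working locally, I would write $G = X^{\RP}$ for a smooth $S$-scheme $X$; the identity section corresponds to a section of $X$ via $X^{\RP}(S) \cong X(S)$. Proposition \ref{0021} gives a bijection between irreducible components of $G_s$ and of $X_s$, so it suffices to produce an open subset of $X$ containing this section with irreducible fibers. Such an open exists by the \'etale-local structure of smooth morphisms near sections (\'etale-locally, $X$ looks like $\Affine^{d}_{S}$ near the zero section, whose fibers are irreducible), and relative perfection then transfers this back to the desired $U \subset G$. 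The difficulty here is to combine the \'etale-local structure theory carefully with Proposition \ref{0021} so as to stay in the relatively perfect category.

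Given such $U$, set $H := m(U \times_{S} \iota(U)) \subset G$, where $m$ and $\iota$ are multiplication and inversion. This is open in $G$ because $m$ is universally open (as the composition of the shear automorphism of $G \times_{S} G$ with a base change of the morphism in Proposition \ref{0020}). For each $s$, irreducibility of $G_s^{0}$ implies that any two nonempty open subsets of $G_s^{0}$ meet, so $H_s = U_s \cdot U_s^{-1} = G_s^{0}$. Hence $|H| = |G^{0}|$, and the open subscheme structure on $H$ inherited from $G$ makes it a subgroup scheme, since multiplication and inversion preserve $H$ fiberwise and $H$ is open in $G$. Quasi-compactness of $H$ follows by choosing $U$ quasi-compact (for instance, an affine open neighborhood of $e(S)$), making $H$ the image of a quasi-compact scheme.
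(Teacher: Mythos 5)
Your overall strategy coincides with the paper's: produce a quasi-compact open neighbourhood $V$ of the identity section whose fibres are (dense) opens of the fibrewise identity components, then take the image of $V \times_{S} V$ under the difference map, which is open by Proposition \ref{0020}, and identify the fibres of that image with $(G_{s})^{0}$ by the classical argument that $U \cdot U^{-1}$ exhausts an irreducible group. That endgame (openness of the difference map, the fibrewise irreducibility argument, quasi-compactness of the image of a quasi-compact open) is correct and is exactly what the paper does, citing the argument of \cite[Tag 0B7P]{Sta23} at the same spot.

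The gap is precisely at the step you yourself flag as the main obstacle, and the justification you sketch for it does not work. You need an open $U \supset e(S)$ with $U_{s} \subset G_{s}^{0}$ for \emph{all} $s$ simultaneously, and you argue that, \'etale-locally, a smooth scheme looks like $\Affine^{d}_{S}$, ``whose fibres are irreducible''. But the local structure theorem only provides an \'etale morphism from a Zariski-open of the smooth model to $\Affine^{d}_{S}$, and an \'etale cover of affine space can have badly disconnected fibres, so irreducibility of the fibres of $\Affine^{d}_{S}$ says nothing about the fibres of your open. Nor does controlling the fibre at one point of $e(S)$ propagate to nearby fibres: the openness of the ``relative connected component along a section'' is exactly the non-trivial content here. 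The paper supplies it by applying \cite[Tag 055R]{Sta23} to a finite-level smooth model $U_{i, 0}$ of a quasi-compact open around the section, obtaining an open $V_{i, 0} \subset U_{i, 0}$ whose fibre over each $s$ is the connected component through $e(s)$, and only then passing to relative perfections and invoking Proposition \ref{0021} to match up components. Without this (or an equivalent) input your $U$ is not constructed, and the rest of the argument has nothing to start from. A secondary, easily repaired slip: $e(S)$ need not be contained in a single affine open when $S$ is not affine, so one should cover $e(S) \cong S$ by finitely many quasi-compact opens and take their union, as the paper does.
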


\begin{proof}
	Let $\pi \colon G \to S$ be the structure morphism.
	Let $e \colon S \into G$ be the identity section.
	By assumption, $S \cong e(S)$ is quasi-compact.
	Cover $e(S)$ by finitely many open subschemes
	$U_{i} = (U_{i, 0})^{\RP} \subset G$
	each of which is the relative perfection
	of a quasi-compact smooth $S$-scheme $U_{i, 0}$
	such that $e(\pi(U_{i})) \subset U_{i}$.
	Let $T_{i} = e^{-1}(U_{i})$.
	The morphisms $\pi$ and $e$ induce a morphism
	$\pi \colon U_{i, 0} \to T_{i}$ and
	its section $e \colon T_{i} \to U_{i, 0}$.
	Then by \cite[Tag 055R]{Sta26},
	there exists a (unique) open subscheme
	$V_{i, 0} \subset U_{i, 0}$
	whose fiber over any $s \in T_{i}$ is
	the connected component of the fiber $(U_{i, 0})_{s}$
	containing $e(s)$.
	Let $V = \bigcup_{i} (V_{i, 0})^{\RP}$.
	It is a quasi-compact open subscheme of $G$
	whose fiber over any $s \in S$ is
	a dense open subscheme of $(G_{s})^{0}$
	by Proposition \ref{0021}.
	By Proposition \ref{0020},
	the first projection $G \times_{S} G \to G$ is open.
	Hence the morphism
	$\varphi \colon G \times_{S} G \to G$,
	$(g, h) \mapsto g -  h$
	is open.
	Therefore the composite
	$V \times_{S} V \into G \times_{S} G \to G$
	of the inclusion and $\varphi$ is open.
	The image of $V \times_{S} V \to G$
	is quasi-compact and
	its fiber over any $s \in S$ is $(G_{s})^{0}$
        by the argument from the proof of \cite[Tag 0B7P]{Sta26}.
	Hence this image gives the desired object $G^{0}$.
\end{proof}


\section{Relatively perfect N\'eron models}
\label{0056}

The base change functor
$X \times_{S} K \mapsfrom X$
defines a morphism of sites
    \[
        j \colon \Spec K_{\RPS} \to S_{\RPS}.
    \]

\begin{definition} \label{0046}
	Let $X_{K}$ be a relatively perfectly smooth $K$-scheme.
	A relatively perfectly smooth $S$-scheme $X$
	together with an isomorphism
	$X \times_{S} K \cong X_{K}$
	is called a \emph{relatively perfect N\'eron model}
	(over $S$) of $X_{K}$
	if for any relatively perfectly smooth $S$-scheme $Y$,
	any morphism $Y \times_{S} K \to X_{K}$ over $K$
	uniquely extends to a morphism $Y \to X$ over $S$.
\end{definition}

In other words,
a relatively perfect N\'eron model $X$ of $X_{K}$, if exists,
is the object representing
the sheaf of sets $j_{\ast} X_{K}$.
If $X_K$ is a group scheme over $K$, then $X$ is a group scheme over $S$ if it exists. Note that, in particular, the scheme $X$ is separated in this case. Indeed, an argument as in the proof of \cite[Section 7.1, Theorem 1, (iii) $\Rightarrow$ (ii)]{BLR90} shows that the unit section $S\to X$ is a closed immersion, so this follows from \cite[Section 7.1, Lemma 2]{BLR90}.
To check if $X$ is a relatively perfect N\'eron model,
it suffices to check that Zariski-locally on $S$,
the above mapping property holds for
all $Y$ which are relative perfections of smooth affine $S$-schemes.

We will show in
Propositions \ref{0062} and \ref{0000} below
that the existence of a N\'eron model
and the existence of a relatively perfect N\'eron model
are essentially equivalent.

\begin{proposition} \label{0062}
	Let $G_{K}$ be a smooth group scheme over $K$.
	Assume that $G_{K}$ admits a N\'eron model $G$ over $S$.
	Then $G_{K}^{\RP}$ admits a relatively perfect
	N\'eron model over $S$
	given by $G^{\RP}$.
\end{proposition}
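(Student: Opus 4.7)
The strategy is to verify directly that $G^{\RP}$ satisfies the defining properties of a relatively perfect N\'eron model of $G_{K}^{\RP}$, reducing the mapping property on relatively perfectly smooth test schemes to the usual N\'eron mapping property of $G$ through an inverse-limit argument.

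First I would check the structural properties. Since $G$ is smooth over $S$, covering $G$ by smooth affine opens and using that relative perfection commutes with \'etale (in particular open) base change by \cite[Corollary 1.9]{Kat86} shows that $G^{\RP}$ is relatively perfectly smooth over $S$. For the generic fiber, in characteristic $p$ the absolute Frobenius of $S$ restricts to the absolute Frobenius of $\Spec K$, so $G^{(1/p^{n})}\times_{S}\Spec K\cong(G_{K})^{(1/p^{n})}$ for each $n\ge0$; passing to the inverse limit yields $G^{\RP}\times_{S}\Spec K\cong G_{K}^{\RP}$, and the group structure is inherited formally from $G$.

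For the mapping property, by the paragraph following Definition \ref{0046} it suffices to check it Zariski-locally on $S$ for test schemes of the form $Y=X^{\RP}$ with $X$ a smooth affine $S$-scheme. Shrinking $S$, I may assume $S$ is affine, in which case each $X^{(1/p^{n})}$ is smooth and affine over $S$ (using \cite[Section 7.6, Proposition 5 (h)]{BLR90} for smoothness), and $Y=\invlim_{n}X^{(1/p^{n})}$ is affine over $S$ with affine transition maps. Given a morphism $\varphi_{K}\colon Y_{K}\to G_{K}^{\RP}$, the adjunction for relative perfection applied on the $K$-side, which is valid because $Y_{K}$ is relatively perfect over $K$ since relative Frobenius commutes with base change, identifies $\varphi_{K}$ with a morphism $\tilde{\varphi}_{K}\colon Y_{K}\to G_{K}$. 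Since $G$ is smooth and hence locally of finite presentation, and the $X^{(1/p^{n})}$ are quasi-compact quasi-separated with affine transitions, the standard continuity of $\Hom$-sets under cofiltered limits gives
\[
\Hom_{K}(Y_{K},G_{K})=\dirlim_{n}\Hom_{K}\bigl(X^{(1/p^{n})}\times_{S}\Spec K,\,G_{K}\bigr),
\]
and each morphism on the right extends uniquely to $X^{(1/p^{n})}\to G$ by the N\'eron mapping property of $G$. By uniqueness these extensions are compatible with the transition maps and assemble in $\dirlim_{n}\Hom_{S}(X^{(1/p^{n})},G)$, which by the same $\Hom$-continuity equals $\Hom_{S}(Y,G)$. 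Applying the relative-perfection adjunction on the $S$-side, with $Y$ relatively perfect, produces the desired unique morphism $Y\to G^{\RP}$ extending $\varphi_{K}$.

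The main obstacle is coordinating the chain of identifications: compatibility of relative perfection with generic-fiber base change, the adjunction on both the $S$- and $K$-sides, $\Hom$-continuity on both sides, and the N\'eron property of $G$ on each smooth approximation $X^{(1/p^{n})}$. The $\Hom$-continuity requires the test scheme $Y$ to be quasi-compact, which is why the reduction to the affine local case $Y=X^{\RP}$ with $X$ smooth affine is essential; with that reduction in hand, any morphism $Y\to G$ factors through a quasi-compact open of the (a priori non-quasi-compact) N\'eron model $G$, which is the standard trick that lets the limit formula apply even though $G$ is only locally of finite presentation.
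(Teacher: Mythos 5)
Your proposal is correct and follows essentially the same route as the paper: reduce to affine test schemes $Y=X^{\RP}$, use the limit description $Y=\invlim_{n}X^{(1/p^{n})}$ together with the fact that $G_{K}$ is locally of finite presentation to factor the generic-fiber morphism through some $X^{(1/p^{n})}\times_{S}K$, extend by the N\'eron mapping property of $G$ applied to the smooth $S$-scheme $X^{(1/p^{n})}$, and conclude by the relative-perfection adjunction. You simply make explicit several steps (relative perfect smoothness of $G^{\RP}$, compatibility with the generic fiber, the Hom-continuity on both sides) that the paper leaves implicit.
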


\begin{proof}
	Let $Y$ be a smooth affine $S$-scheme
	and $Y^{\RP} \times_{S} K \to G_{K}$
	a $K$-scheme morphism.
	It factors through some morphism
	$Y^{(1 / p^{n})} \times_{S} K \to G_{K}$
	for some $n$.
	Since $Y^{(1 / p^{n})}$ is smooth over $S$,
	this morphism extends to a morphism
	$Y^{(1 / p^{n})} \to G$ over $S$
	by assumption.
        The composite $Y^{\RP} \to Y^{(1/{p^n})} \to G$
        gives the desired extension.
\end{proof}

Before giving the converse,
we need to show a criterion
for existence of local N\'eron models,
which is a relatively perfect version of
the criterion for local existence
\cite[Section 10.2, Theorem 2]{BLR90}.
The proof of the key proposition
\cite[Section 10.1, Proposition 8]{BLR90}
does not translate to the relatively perfect setting
as relative perfection does not commute with
the formation of special fibers.
Instead, we will use
P\'epin's N\'eron ind-scheme models
(\cite[Sections 5.1 and 5.2]{Pep14},
\cite[Theorem 3.2]{Pep15}).
It is convenient for us
to slightly generalize his existence result
(\cite[Theorem 5.2.8]{Pep14})
by dropping the completeness
or separable closedness,
so we (essentially) recall
some part of his proof below.

We need some notation.
Let $S_{\sm}$ be
the category of smooth $S$-schemes
with $S$-scheme morphisms
endowed with the \'etale topology.
Let $\Spec K_{\sm}$ be defined similarly.
Let
	\[
			j_{0}
		\colon
			\Spec K_{\sm}
		\to
			S_{\sm}
	\]
be the morphism of sites defined by the functor
$X \times_{S} K \mapsfrom X$
on the underlying categories.
Let $\Ab(S_{\sm})$ be
the category of sheaves of abelian groups
on $S_{\sm}$.

For a connected smooth group scheme
$G_{K, 0}$ over $K$,
let $G_{K, 0}' \subset G_{K, 0}$ be
the maximal smooth connected split
unipotent subgroup
(\cite[Corollary B.3.5]{CGP15}).
Let $G_{K, 0}'' = G_{K, 0} / G_{K, 0}'$.
We have an exact sequence
	\begin{equation} \label{0070}
			0
		\to
			G_{K, 0}'
		\to
			G_{K, 0}
		\to
			G_{K, 0}''
		\to
			0.
	\end{equation}
The group $G_{K, 0}''$ does not contain $\Ga$.
Hence if $S$ is local,
then $G_{K, 0}''$ admits a N\'eron model $G_{0}''$
by \cite[Section 10.2, Theorem 2]{BLR90} and the excellence of $S$,
and applying $j_{0, \ast}$ to \eqref{0070}
yields an exact sequence
	\begin{equation} \label{0069}
			0
		\to
			j_{0, \ast} G_{K, 0}'
		\to
			j_{0, \ast} G_{K, 0}
		\to
			G_{0}''
		\to
			0
	\end{equation}
in $\Ab(S_{\sm})$
since $R^{1} j_{0, \ast} \Ga = 0$.

\begin{proposition} \label{0068}
	Assume that $S$ is local.
	Let $G_{K, 0}$, $G_{K, 0}'$,
	$G_{K, 0}''$ and $G_{0}''$ be
	as above.
	Then the sequence \eqref{0069}
	can be written as the direct limit
	of a system
		\[
				0
			\to
				G_{n, 0}'
			\to
				G_{n, 0}
			\to
				G_{0}''
			\to
				0
		\]
	of exact sequences indexed by $\N$ of
	smooth separated group schemes over $S$
	such that the transition morphisms for $G''_{0}$ are
	the identity morphisms,
	the other transition morphisms
	$G_{n, 0}' \to G_{n + 1, 0}'$
	and $G_{n, 0} \to G_{n + 1, 0}$
	when base-changed to $K$ are isomorphisms
	and each $G_{n, 0}'$ has a finite filtration
	whose successive subquotients are
	all isomorphic to $\Ga$.
\end{proposition}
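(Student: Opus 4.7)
The plan is to realize $j_{0, \ast} G_{K, 0}$ as a N\'eron ind-scheme in the sense of P\'epin, compatible with the extension \eqref{0070}, and to extract the required direct system in two stages: first the split unipotent subobject, then the extension by $G''_{0}$.

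\textbf{Stage 1: system for $G_{K, 0}'$.} Since $G_{K, 0}'$ is smooth connected split unipotent, it admits a finite filtration $0 = H_{0} \subset H_{1} \subset \cdots \subset H_{m} = G_{K, 0}'$ by smooth connected closed subgroups with $H_{i} / H_{i - 1} \cong \Ga$. I argue by induction on $m$. In the base case $G_{K, 0}' = \Ga$, fixing a uniformizer $\pi$ at the closed point of $S$, we have $R \tensor_{\Order} K = \dirlim_{n} \pi^{-n} R$ for every smooth $\Order$-algebra $R$, so $j_{0, \ast} \Ga$ is the filtered colimit of copies of $\Ga$ indexed by $n \in \N$ with all transition maps given by multiplication by $\pi$, each a generic-fibre isomorphism. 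For the inductive step, $R^{1} j_{0, \ast} \Ga = 0$ (from the exactness of $R \mapsto R[1 / \pi]$ on smooth $\Order$-algebras) implies that $j_{0, \ast}$ applied to $0 \to H_{i - 1} \to H_{i} \to \Ga \to 0$ remains exact, and a compatible direct system for $j_{0, \ast} H_{i}$ is built level by level as an extension of the previously constructed systems for $j_{0, \ast} H_{i - 1}$ and $j_{0, \ast} \Ga$.

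\textbf{Stage 2: extension by $G''_{0}$.} I appeal to P\'epin's representability theorem \cite[Theorem 3.2]{Pep15}, which realizes $j_{0, \ast} G_{K, 0}$ as a smooth separated ind-scheme $\dirlim_{n} G_{n, 0}$, where each $G_{n, 0}$ is an iterated dilatation of a fixed smooth separated $S$-model $G_{0, 0}$ of $G_{K, 0}$. Choosing $G_{0, 0}$ to contain $G'_{0, 0}$ as a closed subgroup with quotient $G''_{0}$, and performing the dilatations with centres coherent with those used to construct $G'_{n, 0}$, one obtains at each level a short exact sequence $0 \to G'_{n, 0} \to G_{n, 0} \to G''_{0} \to 0$ whose transition maps are the identity on $G''_{0}$, generic-fibre isomorphisms, and restrict on $G'_{n, 0}$ to the maps constructed in Stage 1.

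The main obstacle is Stage 2: arranging the dilatations so that they simultaneously rebuild the system $\{G'_{n, 0}\}$ on the subobject, represent $j_{0, \ast} G_{K, 0}$ on the total space, and preserve $G''_{0}$ with identity transitions. My plan is to begin from a fixed $G_{0, 0}$ equipped with a $\Ga$-adapted closed embedding $G'_{0, 0} \into G_{0, 0}$ and a surjection $G_{0, 0} \onto G''_{0}$ which is an isomorphism on generic fibres, then perform all dilatations inside this framework; cofinality of smooth N\'eron ind-presentations of $j_{0, \ast} G_{K, 0}$ then identifies $\dirlim_{n} G_{n, 0}$ with $j_{0, \ast} G_{K, 0}$.
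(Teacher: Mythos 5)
There is a genuine gap, and it is concentrated in your Stage 2. First, the appeal to P\'epin's representability theorem is not available here: \cite[Theorem 5.2.8]{Pep14} and \cite[Theorem 3.2]{Pep15} are proved under a completeness or separably-closed-residue-field hypothesis on the local base, and the entire point of Proposition \ref{0068} is to dispense with exactly that hypothesis ($S$ is merely the local ring of an excellent Dedekind scheme). So you cannot start from a smooth ind-presentation of $j_{0, \ast} G_{K, 0}$ by iterated dilatations; that is what needs to be produced. Second, even granting such a presentation, the claim that one can choose the dilatation centres ``coherently'' so that each level sits in a short exact sequence $0 \to G'_{n, 0} \to G_{n, 0} \to G''_{0} \to 0$ with identity transitions on $G''_{0}$ is precisely the content of the proposition, and you assert it rather than prove it. The paper's route is entirely different and purely homological: one shows that the functors $\Ext^{m}_{S_{\sm}}(G''_{0}, \;\cdot\;)$ commute with filtered direct limits --- via the relative connected-\'etale sequence of $G''_{0}$, a Breen--Deligne resolution argument reducing to \'etale cohomology of the quasi-compact scheme $G''^{0}_{0}$ (\cite[Tag 0739]{Sta23}), and the finite generation of the geometric component group (\cite[Proposition 3.5]{HN11}). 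This lets one descend the class of the extension \eqref{0069} in $\Ext^{1}_{S_{\sm}}(G''_{0}, j_{0, \ast} G'_{K, 0}) = \varinjlim_{n} \Ext^{1}_{S_{\sm}}(G''_{0}, G'_{n, 0})$ to a finite level and push it forward along the system, which manufactures the $G_{n, 0}$ with no geometry of dilatations at all.

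A secondary issue: your Stage 1 inductive step has the same hidden difficulty. To realize $j_{0, \ast} H_{i}$ as a level-by-level extension of the system for $j_{0, \ast} \Ga$ by the system for $j_{0, \ast} H_{i - 1}$, you must know that the extension class descends to a finite level, i.e.\ that $\Ext^{1}_{S_{\sm}}(\Ga, \;\cdot\;)$ commutes with filtered direct limits; vanishing of $R^{1} j_{0, \ast} \Ga$ gives exactness of $j_{0, \ast}$ on \eqref{0070}-type sequences but not this commutation. This is the content of \cite[Lemma 5.2.4]{Pep14} (which the paper simply cites, noting in a footnote that the same Breen--Deligne argument reproves it), so Stage 1 is repairable, but as written it elides the one step that actually requires an argument.
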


\begin{proof}
	Write $S = \Spec R$.
	Let $\pi$ be a prime element of $R$.
	Note that $j_{0, \ast} \Ga$ is given by
	the direct limit of the system
	$\Ga \to \Ga \to \cdots$ of
	multiplication by $\pi$ on $\Ga$
	(\cite[Lemma 5.2.2]{Pep14}).
	More generally,
	the sheaf $j_{0, \ast} G_{K, 0}'$ can be written as
	the direct limit of a system
	$\{G_{n, 0}'\}_{n \ge 0}$,
	where each $G_{n, 0}'$ is
	a smooth separated group scheme over $S$
	that has a finite filtration
	whose successive subquotients are
	all isomorphic to $\Ga$
	and the transition morphisms are isomorphisms
	when base-changed to $K$
	(\cite[Lemma 5.2.4]{Pep14}).
	
	With this and \eqref{0069},
	to prove the proposition,
	it is then enough to prove that
	the Ext functor
	$\Ext^{m}_{S_{\sm}}(G_{0}'', \;\cdot\;)$
	on $\Ab(S_{\sm})$ for any $m$ commutes
	with filtered direct limits.
	Consider the relative connected-\'etale sequence
		\[
				0
			\to
				G_{0}''^{0}
			\to
				G''_{0}
			\to
				i_{\ast}
				\pi_{0}(G''_{0, s})
			\to
				0.
		\]
	Since $G_{0}''^{0}$ is quasi-compact,
	the \'etale cohomology functor
	$H^{m}(G_{0}''^{0}, \;\cdot\;)$
	commutes with filtered direct limits
	(\cite[Tag 0739]{Sta26}).
	The same is true for any finite product
	of copies of $G_{0}''^{0}$.
	It follows by a standard
	Deligne-Scholze (or Breen-Deligne)
	resolution argument that
	$\Ext^{m}_{S_{\sm}}(G_{0}''^{0}, \;\cdot\;)$
	also commutes with filtered direct limits
	(see the proof of \cite[Proposition 8.10]{BS24}
	for example).%
	\footnote{
		This argument also shows that
		$\Ext^{m}_{S_{\sm}}(\Ga, \;\cdot\;)$
		commutes with filtered direct limits,
		with which the reader can reconstruct
		the proof of \cite[Lemma 5.2.4]{Pep14} cited above.
	}
	On the other hand,
	the exact sequence $0 \to j_{0, !} \Z \to \Z \to i_{\ast} \Z \to 0$
	induces a long exact sequence
		\[
				\cdots
			\to
				\Ext^{m}_{S_{\sm}}(i_{\ast} \Z, \;\cdot\;)
			\to
				H^{m}(S, \;\cdot\;)
			\to
				H^{m}(K, j_{0}^{\ast} \;\cdot\;)
			\to
				\cdots.
		\]
	This implies that $\Ext^{m}_{S_{\sm}}(i_{\ast} \Z, \;\cdot\;)$
	commutes with filtered direct limits.
	Since the component group
	$\pi_{0}(G_{0, s}'')$
	has finitely generated group of geometric points
	by \cite[Proposition 3.5]{HN11},
	it follows that
	$\Ext^{m}_{S_{\sm}}(i_{\ast} \pi_{0}(G''_{0, s}), \;\cdot\;)$
	also commutes with filtered direct limits.
	Thus $\Ext^{m}_{S_{\sm}}(G_{0}'', \;\cdot\;)$
	commutes with filtered direct limits.
\end{proof}

Now we can prove the following
criterion for local existence
of relatively perfect N\'eron models:

\begin{proposition} \label{0063}
	Assume that $S$ is local.
	Let $G_{K}$ be a connected relatively perfectly
	smooth group scheme over $K$.
	Then $G_{K}$ admits a relatively perfect
	N\'eron model over $S$
	if and only if it does not contain $\Ga^{\RP}$
	as a closed subgroup scheme.
\end{proposition}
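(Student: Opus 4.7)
The plan is to prove both directions via the P\'epin ind-scheme model (Proposition \ref{0068}) combined with the relatively perfect formalism already set up. To do so, I first reduce to the case $G_{K} = G_{K, 0}^{\RP}$ for some connected quasi-compact smooth $K$-group scheme $G_{K, 0}$; this should follow from the structure theory of \cite[Section 8]{BS24}, which places the relative perfections of qc smooth $K$-group schemes as an abelian subcategory of $\Ab(K_{\RP})$, together with the observation that a connected relatively perfectly smooth qc $K$-group is in this essential image. Under such an identification one has $\Ga^{\RP}_{K} \subset G_{K}$ if and only if $\Ga_{K} \subset G_{K, 0}$: the forward implication passes through full faithfulness of $\RP$ on the relevant subcategory, and the reverse implication is immediate from applying $\RP$ to $\Ga_{K} \hookrightarrow G_{K, 0}$.

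For the backward direction, assume $\Ga^{\RP}_{K} \not\subset G_{K}$, hence $\Ga_{K} \not\subset G_{K, 0}$. Then \cite[Section 10.2, Theorem 2]{BLR90} (applicable since $S$ is excellent local) supplies a N\'eron model $G_{0}$ of $G_{K, 0}$, and Proposition \ref{0062} immediately upgrades $G_{0}^{\RP}$ to a relatively perfect N\'eron model of $G_{K} = G_{K, 0}^{\RP}$.

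The forward direction is where the real work is. Suppose that a relatively perfect N\'eron model $G$ of $G_{K}$ exists, and for contradiction assume $\Ga^{\RP}_{K} \subset G_{K}$. Applying Proposition \ref{0068} to $G_{K, 0}$, write $j_{0, \ast} G_{K, 0}$ as the filtered direct limit of an exact ind-system
\[
    0 \to G'_{n, 0} \to G_{n, 0} \to G''_{0} \to 0
\]
of smooth separated $S$-group schemes, the $G'_{n, 0}$ being iterated extensions of copies of $\Ga$, the transition morphisms essentially ``multiplication by a uniformizer $\pi$'' on each $\Ga$-factor (see \cite[Lemma 5.2.2]{Pep14}), and $G''_{0}$ being an honest N\'eron model of $G''_{K, 0}$. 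Passing to relative perfections on each term, I would argue that $j_{\ast} G_{K}$ on $S_{\RPS}$ is an extension of $(G''_{0})^{\RP}$ by $\dirlim_{n} (G'_{n, 0})^{\RP}$; the representability of $j_{\ast} G_{K}$ by the relatively perfectly smooth $S$-scheme $G$ then forces this direct limit to be representable as well. Finally, I would show that such an inductive limit of copies of $\Ga^{\RP}$ along multiplication-by-$\pi$ maps cannot be representable by a relatively perfectly smooth $S$-scheme unless $G'_{K, 0} = 0$, i.e.\ unless $\Ga_{K} \not\subset G_{K, 0}$, yielding the desired contradiction.

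The principal difficulty lies in this last step. As noted in the excerpt, the classical special-fiber obstruction of \cite[Section 10.1, Proposition 8]{BLR90} does \emph{not} transfer, because relative perfection does not commute with the formation of special fibers. Non-representability must therefore be detected intrinsically on the ind-system, using the universal openness of relatively perfectly smooth morphisms (Proposition \ref{0020}) and the irreducible-component analysis of Proposition \ref{0021} to extract the necessary information about fibers of a hypothetical representing scheme. Controlling the $\Ga^{\RP}$-tower in the relatively perfect setting, without access to the straightforward special-fiber argument that BLR use, is the main technical hurdle I expect in carrying out the plan.
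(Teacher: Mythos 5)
Your setup coincides with the paper's: the reduction to $G_{K} = G_{K,0}^{\RP}$ via \cite[Propositions 8.7 and 8.13]{BS24}, the backward direction via \cite[Section 10.2, Theorem 2]{BLR90} and Proposition \ref{0062}, and the invocation of Proposition \ref{0068} for the forward direction are all exactly what the paper does. But the decisive step of the forward direction --- actually deriving a contradiction from $\Ga^{\RP} \subset G_{K}$ --- is missing from your proposal: you state that one must show the ind-system of copies of $\Ga^{\RP}$ along multiplication by $\pi$ is not representable by a relatively perfectly smooth scheme, you correctly observe that the classical special-fiber obstruction of \cite[Section 10.1, Proposition 8]{BLR90} is unavailable, and you then explicitly defer the resolution as ``the main technical hurdle.'' A plan whose key step is acknowledged as unresolved is a genuine gap, and the route you sketch for closing it (extracting fiber information from a hypothetical representing scheme via Propositions \ref{0020} and \ref{0021}) is precisely the kind of fiberwise argument the paper avoids because relative perfection does not interact well with special fibers.

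The paper's actual mechanism is different and does not require any non-representability statement. First, it produces a \emph{quasi-compact} relatively perfect N\'eron model $G'$ of the split part $G_{K}'$: writing $G$ as the direct limit of the $G_{n}$, the relative identity component $G^{0}$ (quasi-compact by Proposition \ref{0026}) is the direct limit of the inverse images $i_{n}^{-1}G^{0}$, and quasi-compactness plus \cite[Tag 0738]{Sta23} forces this system, and hence the system $\{G_{n}'\}$, to stabilize. Second, if $\Ga^{\RP} \subset G_{K}$, then $G_{K}'$ surjects onto $\Ga^{\RP}$, and since $R^{1}j_{\ast}\Ga^{\RP}=0$ one gets a surjection $G' \onto j_{\ast}\Ga^{\RP}$ in $\Ab(S_{\RPS})$. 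Writing $j_{\ast}\Ga^{\RP}$ as the direct limit of $\Ga^{\RP} \stackrel{\pi}{\to} \Ga^{\RP} \to \cdots$, the quasi-compactness of $G'$ forces this surjection to factor through a finite stage, so some single map $\Ga^{\RP} \to j_{\ast}\Ga^{\RP}$ is surjective as a map of sheaves; evaluating on $\Order_{S}^{sh}$-points this says multiplication by $\pi^{-n}$ maps $R^{sh}$ onto $K^{sh}$, which is false. You should replace your non-representability plan with this two-step argument (or supply a complete substitute); as written, the proof is not complete.
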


\begin{proof}
	Choose a connected smooth group scheme
	$G_{K, 0}$ over $K$
	whose relative perfection is $G_{K}$
	(\cite[Propositions 8.7 and 8.13]{BS24}).
	
	Assume that $G_{K}$ does not contain $\Ga^{\RP}$.
	Then $G_{K, 0}$ does not contain $\Ga$.
	Hence it admits a N\'eron model
	by \cite[Section 10.2, Theorem 2]{BLR90}.
	This implies that $G_{K}$ admits
	a relatively perfect N\'eron model
	by Proposition \ref{0062}.
	
	Next, assume that
	$G_{K}$ admits a relatively perfect
	N\'eron model $G$ over $S$.
	We apply the statement and the notation of
	Proposition \ref{0068} to $G_{K, 0}$.
	This gives groups
	$G_{n, 0}'$, $G_{n, 0}$, $G_{0}''$
	and so on.
	Let $G_{K}'$, $G_{K}''$,
	$G_{n}'$, $G_{n}$, $G''$ be
	the relative perfections of
	$G_{K, 0}'$, $G_{K, 0}''$,
	$G_{n, 0}'$, $G_{n, 0}$, $G_{0}''$,
	respectively.
	
	We show that $G_{K}'$ admits
	a quasi-compact relatively perfect
	N\'eron model.
	The group $G$ is the direct limit
	of the system $\{G_{n}\}_{n}$
	by (the ind analogue of) Proposition \ref{0062}.
	Let $i_{n} \colon G_{n} \to G$ be the natural morphism.
	Then the relative identity component $G^{0}$
	is the direct limit of the system
	$\{i_{n}^{-1} G^{0}\}_{n}$ of inverse images.
	We have $G'_{n} \subset i_{n}^{-1} G^{0}$.
	The transition morphisms
	$G_{n, 0} \to G_{n + 1, 0}$
	are injective in $\Ab(S_{\sm})$
	due to separatedness.
	Since $G^{0}$ is quasi-compact
	(Proposition \ref{0026}),
	it follows that the transition morphisms
	$i_{n}^{-1} G^{0} \to i_{n + 1}^{-1} G^{0}$
	are isomorphisms for all large $n \ge N$
        by \cite[Tag 0738]{Sta26}.
	This implies that
	$G_{n}' \to G_{n + 1}'$ is an isomorphism for $n \ge N$.
	Denote these isomorphic groups as $G'$.
	Then $G'$ gives a relatively perfect
	N\'eron model of $G'_{K}$.
	It is quasi-compact
	since each $G_{n}'$ is.
	
	Now, suppose for contradiction that
	$G_{K}$ contains $\Ga^{\RP}$.
	Then $G_{K}'$ contains $\Ga^{\RP}$
	since $G_{K, 0}' \subset G_{K, 0}$ is the maximal smooth
	connected split unipotent subgroup.
	In particular, $G_{K}' \ne 0$.
	Hence we have an exact sequence
		\[
				0
			\to
				G_{K}'''
			\to
				G_{K}'
			\to
				\Ga^{\RP}
			\to
				0,
		\]
	where $G_{K}'''$ has a finite filtration
	whose successive subquotients are
	all isomorphic to $\Ga^{\RP}$.
	Hence we have an exact sequence
		\[
				0
			\to
				j_{\ast} G_{K}'''
			\to
				G'
			\to
				j_{\ast} \Ga^{\RP}
			\to
				0
		\]
	in $\Ab(S_{\RPS})$
        because $R^{1} j_{\ast} \Ga^{\RP}=0$.
	The sheaf $j_{\ast} \Ga^{\RP}$ is given by
	the direct limit of the system
	$\Ga^{\RP} \to \Ga^{\RP} \to \cdots$ of
	multiplication by $\pi$ on $\Ga^{\RP}$.
	The quasi-compactness of $G'$ implies that
	the morphism $G' \onto j_{\ast} \Ga^{\RP}$
	factors through the $n$-th term $\Ga^{\RP}$
	for some $n$ \cite[Tag 0738]{Sta26}.
	This implies that
	the $n$-th morphism
	$\Ga^{\RP} \to j_{\ast} \Ga^{\RP}$
	is surjective in $\Ab(S_{\RPS})$.
	Hence the morphism $R^{sh} \to K^{sh}$
	given by multiplication by $\pi^{- n}$
	is surjective,
	which is a contradiction.
	Therefore $G_{K}$ does not contain $\Ga^{\RP}$.
\end{proof}

Here is the converse to Proposition \ref{0062}:

\begin{proposition} \label{0000}
	Let $G_{K}$ be a smooth group scheme over $K$.
	Assume that $G_{K}^{\RP}$ admits
	a relatively perfect N\'eron model over $S$.
	Then $G_{K}$ admits a N\'eron model over $S$.
\end{proposition}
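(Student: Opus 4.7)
The plan is to reduce the existence of the N\'eron model to local existence at each closed point of $S$ (via the criterion of Proposition \ref{0063}) and then glue the local N\'eron models into a global one. Using the standard compatibility between N\'eron models of a smooth group scheme and those of its identity component, together with its analogue in the relatively perfect setting via Proposition \ref{0026}, I would first reduce to the case where $G_{K}$ is connected.

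For local existence at a closed point $s \in S$: the base change $\mathcal{G} \times_{S} \Spec O_{S, s}$ of the given relatively perfect N\'eron model $\mathcal{G}$ of $G_{K}^{\RP}$ is again a relatively perfect N\'eron model over $O_{S, s}$, because any relatively perfectly smooth $O_{S, s}$-scheme spreads out to an open neighborhood of $s$ by applying Proposition \ref{0022} to the layers $X^{(1 / p^{n})}$. By Proposition \ref{0063}, $G_{K}^{\RP}$ does not contain $\Ga^{\RP}$ as a closed subgroup scheme. Since relative perfection preserves closed immersions, any closed immersion $\Ga \hookrightarrow G_{K}$ would induce a closed immersion $\Ga^{\RP} \hookrightarrow G_{K}^{\RP}$; hence $G_{K}$ does not contain $\Ga$. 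By the classical local existence criterion \cite[Section 10.2, Theorem 2]{BLR90} (applicable because $S$ is excellent), $G_{K}$ admits a N\'eron model $G^{(s)}$ over $O_{S, s}$.

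For the global gluing: each local N\'eron model $G^{(s)}$ spreads out to a smooth group scheme over some open neighborhood $U_{s}$ of $s$, by a limit argument applied after filtering $G^{(s)}$ by quasi-compact open subgroup schemes (to handle the locally finite type case). On each overlap $U_{s} \cap U_{s'}$ with $s \ne s'$ closed, both spread-outs restrict, at every closed point in the overlap, to the local N\'eron model there; by uniqueness of N\'eron models over local bases they therefore agree canonically on the overlap, up to possibly shrinking the neighborhoods. This produces a smooth locally finite type group scheme $G$ over $S$ extending $G_{K}$, and the N\'eron mapping property of $G$ is a local condition on $S$ which reduces to the already established local property.

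The main obstacle will be the gluing step, particularly the spreading-out of potentially non-quasi-compact N\'eron models $G^{(s)}$ to an open neighborhood of $s$, which requires care with filtering by finite-type open subgroup schemes. However, thanks to the uniqueness of N\'eron models over local Dedekind bases, all gluing data are canonical, so no delicate cocycle verification is needed.
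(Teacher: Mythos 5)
Your local-existence step is sound and matches the paper: base-change the relatively perfect N\'eron model to $\Order_{S,s}$, invoke Proposition \ref{0063} to rule out $\Ga^{\RP}$ (hence $\Ga$) as a subgroup, and apply \cite[Section 10.2, Theorem 2]{BLR90}. (One caveat: relative perfection does \emph{not} commute with closed immersions in general, as remarked after Proposition \ref{0020}; the implication you want holds because $\Ga$ is the kernel of $G_K \to G_K/\Ga$ and relative perfection, being a right adjoint, preserves kernels.) The fatal gap is in the gluing step. You assert that the spread-out $\mathcal{G}^{(s)}$ of the local N\'eron model $G^{(s)}$ over a neighborhood $U_s$ of $s$ restricts, at every closed point $t$ of $U_s \cap U_{s'}$, to the local N\'eron model at $t$. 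There is no reason for this: spreading out a local N\'eron model from $\Order_{S,s}$ does not preserve the N\'eron property at the other closed points of $U_s$, so the two spread-outs need only agree over a dense open of the overlap, which may miss infinitely many closed points, and shrinking does not terminate. If your gluing worked, it would prove that local existence at every closed point implies global existence --- but this is exactly what Theorem \ref{0032} refutes: $\nu(r)_K$ with $r>1$ has a local N\'eron model $\nu(r)^{\sim}_{\Order_{S,s}}$ at \emph{every} closed point (Proposition \ref{0035}), yet no global N\'eron model, precisely because these local models disagree with any fixed spread-out model at all closed points of any dense open (Proposition \ref{0034}).

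What is missing is a second, essential use of the global hypothesis: you must show that the local N\'eron models are, at all but finitely many closed points, the base changes of a \emph{single} finite-type group scheme over a dense open $U \subset S$; then \cite[Section 10.1, Proposition 9]{BLR90} does the gluing for you. The paper extracts this from the relatively perfect N\'eron model $G$ via its quasi-compact relative identity component $G^0$ (Proposition \ref{0026}): choosing a spread-out $G_0$ of $G_K$ with connected fibers over a dense open $U$, one arranges $G_0^{\RP} \isomto G^0$ over $U$ by Proposition \ref{0022}, and then, using $G^{0}_{\Order_{S,s}}(\Order_{S,s}^{sh}) \cong G^0(\Order_{S,s}^{sh}) \cong G_0(\Order_{S,s}^{sh})$ (valid since $\Spec \Order_{S,s}^{sh}$ is relatively perfect over $S$), shows that $G_0 \times_S \Order_{S,s} \to G^{0}_{\Order_{S,s}}$ is surjective on $k(s)^{\sep}$-points, hence an isogeny, hence (being an isomorphism generically) an isomorphism. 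Without an argument of this kind your proof does not use the existence of the \emph{global} relatively perfect N\'eron model beyond its local consequences, and so cannot be correct.
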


\begin{proof}
	If $G_{K}$ is \'etale
	with finitely generated group of geometric points,
	then it admits an \'etale N\'eron model over $S$
	by \cite[Section 10.1, Proposition 4]{BLR90}.
	The finite generation assumption can be dropped
	by a limit argument.
	
	In general, let $G$ be the relatively perfect
	N\'eron model of $G_{K}^{\RP}$
	and $N$ the N\'eron model of $\pi_{0}(G_{K})$.
	Since the identity section $S \into N$ of $N$ is an open immersion,
	the kernel of $G \to N$ is an open subgroup scheme of $G$
	and, in particular, relatively perfectly smooth over $S$.
	Hence this kernel gives a relatively perfect
	N\'eron model of $(G_{K}^{0})^{\RP}$.
	With \cite[Section 7.5, Proposition 1]{BLR90}
	(which generalizes to non-quasi-compact N\'eron models
	as remarked after
	\cite[Section 10.3, Conjecture II]{BLR90};
	see also \cite[Corollary 2.6]{Ove24}),
	we may thus assume that $G_{K}$ is connected.
	
	By Proposition \ref{0063},
	$G_{K}^{\RP}$ does not contain $\Ga^{\RP}$
	and hence $G_{K}$ does not contain $\Ga$.
	Therefore $G_{K}$ admits a N\'eron model
	at each closed point of $S$
	by \cite[Section 10.2, Theorem 2]{BLR90}.
	
	Let $G$ be a relatively perfect
	N\'eron model of $G_{K}^{\RP}$.
	Let $G^{0} \subset G$ be its relative identity component
	(Proposition \ref{0026}),
	which is quasi-compact.
	By spreading out $G_{K}$,
	choose a smooth separated group scheme $G_{0}$
	over a dense open subscheme $U \subset S$
	with connected fibers and generic fiber $G_{K}$.
	We have a natural morphism
	$G_{0}^{\RP} \to G^{0}$ over $U$.
	Since it is an isomorphism over $K$,
	by Proposition \ref{0022} \eqref{0024}
	and \eqref{0025},
	we may assume that
	it is an isomorphism by shrinking $U$ if necessary.
	
	For each closed point $s \in S$,
	let $G_{\Order_{S, s}}$ be the N\'eron model
	over $\Order_{S, s}$ of $G_{K}$.
	Then we have a natural morphism
	$G_{0} \times_{S} \Order_{S, s} \to G_{\Order_{S, s}}^{0}$
	over $\Order_{S, s}$
	and hence a map
		$
				G_{0}(\Order_{S, s}^{sh})
			\to
				G_{\Order_{S, s}}^{0}(\Order_{S, s}^{sh})
		$.
	Since
	$G_{\Order_{S, s}}^{\RP} \cong G \times_{S} \Order_{S, s}$,
	we have
	$G_{\Order_{S, s}}^{0, \RP} \cong G^{0} \times_{S} \Order_{S, s}$.
	As $\Spec \Order_{S, s}^{sh}$ is
	relatively perfect over $S$,
	this implies
		$
				G_{\Order_{S, s}}^{0}(\Order_{S, s}^{sh})
			\cong
				G^{0}(\Order_{S, s}^{sh})
		$.
	Since $G_{0}^{\RP} \isomto G^{0}$, this implies that
	the map
		$
				G_{0}(\Order_{S, s}^{sh})
			\to
				G_{\Order_{S, s}}^{0}(\Order_{S, s}^{sh})
		$
	is an isomorphism.
	
	In particular, the map
	$G_{0}(k(s)^{\sep}) \to G_{\Order_{S, s}}^{0}(k(s)^{\sep})$
	is surjective.
	Hence
	$G_{0} \times_{S} \Order_{S, s} \to G_{\Order_{S, s}}^{0}$
	is an isogeny and, in particular, flat.
	Since it is an isomorphism on the generic fibers,
	it follows that
	$G_{0} \times_{S} \Order_{S, s} \isomto G_{\Order_{S, s}}^{0}$.
	This implies that $G_{K}$ admits a N\'eron model over $S$
	by \cite[Section 10.1, Proposition 9]{BLR90}.
\end{proof}

By \cite[Section 7.5, Proposition 1]{BLR90},
the existence of N\'eron models
inherits to closed subgroups and extensions.
It has the relatively perfect version:

\begin{proposition} \label{0018}
	Let $0 \to G_{K}' \to G_{K} \to G_{K}'' \to 0$ be
	an exact sequence of quasi-compact
	relatively perfectly smooth group schemes over $K$.
	\begin{enumerate}
		\item \label{0016}
			If $G_{K}$ admits a relatively perfect
			N\'eron model, then so does $G_{K}'$.
		\item \label{0017}
			If $G_{K}'$ and $G_{K}''$ admit
			relatively perfect N\'eron models,
			then so does $G_{K}$.
	\end{enumerate}
	The same are true
	with ``relatively perfect N\'eron models''
	replaced by
	``quasi-compact relatively perfect N\'eron models''.
\end{proposition}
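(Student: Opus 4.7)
My plan is to reduce both parts to the classical \cite[Section 7.5, Proposition 1]{BLR90} via the equivalence between existence of Néron models and of relatively perfect Néron models established in Propositions \ref{0062} and \ref{0000}. The main technical input is to lift the given exact sequence of relatively perfectly smooth group schemes to an exact sequence of smooth $K$-group schemes.

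For part \eqref{0016}, I will first choose a smooth quasi-compact $K$-group $G_{K,0}$ with $G_{K,0}^{\RP} = G_K$ using \cite[Propositions 8.7 and 8.13]{BS24}, and likewise pick a smooth model $H$ of $G_K'$. By the adjunction $\Hom(H^{\RP}, G_{K,0}^{\RP}) = \dirlim_n \Hom(H^{(1/p^n)}, G_{K,0})$, the closed immersion $G_K' \hookrightarrow G_K$ comes from a morphism $H^{(1/p^n)} \to G_{K,0}$ of smooth $K$-group schemes for some $n$. Replacing $H$ by $H^{(1/p^n)}$, I obtain a morphism $H \to G_{K,0}$ whose scheme-theoretic image, which is smooth since it is a quotient of $H$ by a closed normal subgroup scheme, serves as a smooth model $G_{K,0}' \subset G_{K,0}$ of $G_K'$. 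By Proposition \ref{0000}, $G_{K,0}$ admits a Néron model $G_0$, whose relative perfection is $G$ (by Proposition \ref{0062} and uniqueness); by classical \cite[Section 7.5, Proposition 1]{BLR90}, the closed subgroup $G_{K,0}'$ admits a Néron model $G_0'$; and by Proposition \ref{0062}, $G_0'^{\RP}$ is the desired relatively perfect Néron model of $G_K'$.

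For part \eqref{0017}, I will lift the exact sequence $0 \to G_K' \to G_K \to G_K'' \to 0$ to an exact sequence $0 \to G_{K,0}' \to G_{K,0} \to G_{K,0}'' \to 0$ of smooth quasi-compact $K$-group schemes whose relative perfection recovers the original. Such a lift exists, after suitable Frobenius twisting of smooth models, by \cite[Proposition 8.12]{BS24} combined with the adjunction used above. Proposition \ref{0000} applied to $G_{K,0}'$ and $G_{K,0}''$ produces Néron models for them; the classical \cite[Section 7.5, Proposition 1]{BLR90} then yields a Néron model of $G_{K,0}$; and its relative perfection is the relatively perfect Néron model of $G_K$ by Proposition \ref{0062}. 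Quasi-compactness is preserved in all the constructions used (scheme-theoretic image, the quasi-compact form of classical BLR, and relative perfection), so the final assertions on quasi-compact Néron models follow as well.

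The main obstacle I anticipate is the lifting from the relatively perfect to the smooth category. Since the relative perfection functor from smooth $K$-groups to relatively perfect ones is fully faithful only after formally inverting Frobenius twists, in reconstructing morphisms and exact sequences one must be prepared to pass to Frobenius twists of the chosen smooth models. A secondary subtlety is verifying that the scheme-theoretic image (in the subgroup case) yields a smooth closed subgroup whose relative perfection matches $G_K'$, which relies on smoothness of fppf quotients of smooth algebraic $K$-groups by closed subgroup schemes.
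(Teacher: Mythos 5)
Your proposal is correct and follows essentially the same route as the paper: lift the exact sequence to quasi-compact smooth $K$-group schemes via \cite[Proposition 8.12]{BS24} and reduce to \cite[Section 7.5, Proposition 1]{BLR90} using Propositions \ref{0062} and \ref{0000}. The paper lifts the whole sequence once and uses it for both parts, whereas you handle part \eqref{0016} separately via a scheme-theoretic image; this is only a minor variation, not a different method.
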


\begin{proof}
	From the proof of \cite[Proposition 8.12]{BS24},
	it follows that there exists an exact sequence
	$0 \to G_{K, 0}' \to G_{K, 0} \to G_{K, 0}'' \to 0$
	of quasi-compact smooth group schemes over $K$
	whose relative perfection recovers the sequence
	$0 \to G_{K}' \to G_{K} \to G_{K}'' \to 0$.
	Hence by Propositions \ref{0062}, \ref{0063} and \ref{0000},
	the statements reduce to
	\cite[Section 7.5, Proposition 1]{BLR90}
	(which generalizes to non-quasi-compact N\'eron models
	as mentioned in the proof of Proposition \ref{0000}).
\end{proof}


\section{Existence of N\'eron models: fundamental cases}

Assume that $S \ne \Spec K$ and $r = 1$,
namely that $S$ has perfect residue fields
at closed points.
In this section, we will construct N\'eron models
in two fundamental cases.

We first need the following
relatively perfect version
of Hilbert's theorem 90:

\begin{proposition} \label{0065}
	We have $R^{1} j_{\ast} \Gm^{\RP} = 0$.
\end{proposition}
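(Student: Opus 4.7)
The plan is to unpack the sheaf $R^1 j_* \Gm^{\RP}$ as the sheafification on $S_{\RPS}$ of the presheaf $Y \mapsto H^1(Y \times_S K, \Gm^{\RP})$ and show that every class in this $H^1$ dies after an étale cover of $Y$ in $S_{\RPS}$. It suffices to treat $Y = Y_0^{\RP}$ for $Y_0$ a smooth affine $S$-scheme of finite presentation, since such objects generate $S_{\RPS}$ for the étale topology. The key preliminary observation is that $\Gm^{\RP}$ and $\Gm$ agree as sheaves on $S_{\RPS}$: for any relatively perfect $S$-scheme $X$, adjunction and the fact that the absolute Frobenius is an isomorphism on $X$ give $\Gm^{\RP}(X) = \varprojlim_n \Hom_S(X^{(p^n)}, \Gm) = \Gm(X)$. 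By Kato's \cite[Corollary 1.9]{Kat86}, étale morphisms to relatively perfectly smooth $K$-schemes are automatically relatively perfectly smooth, so the small étale site of $Y_K$ inside $K_{\RPS}$ coincides with the classical small étale site, and therefore $H^1(Y_K, \Gm^{\RP}) = H^1_{\et}(Y_K, \Gm) = \Pic(Y_K)$.

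Next I would use the limit formula. Since $Y_K = \varprojlim_n Y_{0,K}^{(1/p^n)}$ is a filtered inverse limit of quasi-compact quasi-separated schemes with affine transition maps, line bundles descend to a finite stage, giving
\[
\Pic(Y_K) \;=\; \varinjlim_n \Pic\!\bigl(Y_{0,K}^{(1/p^n)}\bigr).
\]
Thus any class $\alpha \in H^1(Y_K, \Gm^{\RP})$ is represented by a line bundle $L$ on $Y_{0,K}^{(1/p^n)}$ for some $n$. By \cite[Section 7.6, Proposition 5 (h)]{BLR90}, $Y_0^{(1/p^n)}$ is smooth over $S$, hence regular and Noetherian. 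Regularity implies that the restriction map $\Pic(Y_0^{(1/p^n)}) \to \Pic(Y_{0,K}^{(1/p^n)})$ is surjective (extend Weil divisors by Zariski closure and use that Weil equals Cartier), so $L$ extends to a line bundle $\tilde L$ on $Y_0^{(1/p^n)}$. By definition of a line bundle, there is a finite Zariski open cover $\{U_i\}$ of $Y_0^{(1/p^n)}$ with $\tilde L|_{U_i}$ trivial.

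Finally, pulling back along the projection $\pi \colon Y_0^{\RP} \to Y_0^{(1/p^n)}$ yields a Zariski open cover $\{\pi^{-1}(U_i)\}$ of $Y = Y_0^{\RP}$; this is in particular an étale cover in $S_{\RPS}$. On each $\pi^{-1}(U_i)_K$ the line bundle $L$ pulls back from $\tilde L|_{U_i}$ and hence is trivial, so $\alpha$ dies on this cover. This kills $R^1 j_* \Gm^{\RP}$. The only technical point I expect to need some care is the limit formula for $\Pic$, but this is a standard consequence of the fact that line bundles on qcqs schemes are finitely presented (and the analogous result for étale cohomology of $\Gm$ along affine inverse limits), and the extension of $L$ across the special fibres via regularity; both are essentially formal once $Y_0^{(1/p^n)}$ is identified as a regular Noetherian scheme of finite presentation over $S$.
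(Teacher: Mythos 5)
Your argument is correct and follows essentially the same route as the paper's: both identify $\Gm^{\RP}$-cohomology with ordinary \'etale $\Gm$-cohomology of the generic fibre, descend a class to a finite level $Y_0^{(1/p^n)}$ of the pro-system by the limit formula for $\Pic$, and use regularity (smoothness over $S$) of $Y_0^{(1/p^n)}$ to kill it. The only cosmetic difference is that the paper verifies vanishing of stalks at strict henselizations, where $\Pic$ of the local scheme vanishes outright, whereas you trivialize on a Zariski cover after extending the line bundle across the special fibres.
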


\begin{proof}
	It is enough to show that
	$H^{1}((X_{x}^{sh})_{K}, \Gm^{\RP}) = 0$
	for the generic fiber $(X_{x}^{sh})_{K}$
	of the strict localization $X_{x}^{sh}$
	at a point $x$
	of the relative perfection $X$
	of an affine smooth $S$-scheme $X_{0}$.
	Let $x_{n}$ be the image of $x$
	under the morphism $X \to X_{0}^{(1 / p^{n})}$.
	Then $X_{x}^{sh}$ is isomorphic to
	the inverse limit of
	$(X_{0}^{(1 / p^{n})})_{x_{n}}^{sh}$
	in $n$.
	Hence
		\begin{align*}
					H^{1}((X_{x}^{sh})_{K}, \Gm^{\RP})
			&	\cong
					H^{1}((X_{x}^{sh})_{K}, \Gm)
			\\
			&	\cong
					\dirlim_{n}
						H^{1} \bigl(
							((X_{0}^{(1 / p^{n})})_{x_{n}}^{sh})_{K},
							\Gm
						\bigr)
		\end{align*}
	by \cite[Proposition 8.9]{BS24} and its proof.
	For each $n$, the group
		$
			H^{1} \bigl(
				((X_{0}^{(1 / p^{n})})_{x_{n}}^{sh})_{K},
				\Gm
			\bigr)
		$
	is a quotient of
		$
			H^{1}(
				(X_{0}^{(1 / p^{n})})_{x_{n}}^{sh},
				\Gm
			)
		$
	since $(X_{0}^{(1 / p^{n})})_{x_{n}}^{sh}$ is regular,
	and this final group is zero
	since $(X_{0}^{(1 / p^{n})})_{x_{n}}^{sh}$ is local.
\end{proof}

Now we treat the first fundamental case:

\begin{proposition} \label{0003}
	Let $G_{K}$ be a connected wound unipotent group over $K$
	killed by $p$ with connected dual.
	Then it admits a quasi-compact N\'eron model over $S$.
\end{proposition}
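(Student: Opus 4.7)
The plan is to pass to the relatively perfect setting via Proposition \ref{0000} and realize the N\'eron model as a sheaf-Hom into $\nu(1)_{S}^{\RP}$. Let $H_{K} := \sheafhom_{K_{\RP}}(G_{K}^{\RP}, \nu(1)_{K}^{\RP})$ be the dual of $G_{K}$. By hypothesis $H_{K}$ is connected, and by \cite[Proposition 3.5]{Suz24} it is a wound relatively perfect unipotent group over $K$; it is killed by $p$ since $\nu(1)_{K}^{\RP}$ is. Double duality gives $G_{K}^{\RP} \cong \sheafhom_{K_{\RP}}(H_{K}, \nu(1)_{K}^{\RP})$, and by Proposition \ref{0000} it suffices to construct a quasi-compact relatively perfect N\'eron model of $G_{K}^{\RP}$.

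The crucial property of the target is that $\nu(1)^{\RP}_{k(s)} = 0$ at each closed-point residue field of $S$, since $\Gm \xrightarrow{p} \Gm$ is surjective in the \'etale topology of a perfect field of characteristic $p$; morally, $\nu(1)_{S}^{\RP}$ is supported on the generic fiber. I would first spread $H_{K}$ to a global model: by Proposition \ref{0022} extend $H_{K}$ to a connected relatively perfectly smooth group scheme over a dense open $U \subset S$; at each closed point $s \notin U$, take the relative identity component (Proposition \ref{0026}) of a local relatively perfect N\'eron model of $H_{K}$ supplied by Proposition \ref{0063}; glue these pieces along their common generic fibers via Proposition \ref{0022} \eqref{0024} to obtain a connected relatively perfectly smooth group scheme $H$ over all of $S$ extending $H_{K}$.

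The candidate N\'eron model is $G := \sheafhom_{S_{\RPS}}(H, \nu(1)_{S}^{\RP})$. Applying the exact sequence \eqref{0015} for $n = 1$ together with $\sheafhom_{S_{\RPS}}(H, \Gm) = 0$ (unipotency of the fibers of $H$) identifies $G$ with $\sheafext^{1}_{S_{\RPS}}(H, \Gm)[p]$, a relatively perfect Picard-type sheaf. Its representability and quasi-compactness should follow from Proposition \ref{0065} (Hilbert 90 for $\Gm^{\RP}$) combined with standard results on extensions of connected smooth group schemes by $\Gm$, with the connectedness of $H$ forcing bounded component groups on the closed fibers. The generic fiber identification $G \times_{S} K \cong G_{K}^{\RP}$ is double duality, and the N\'eron mapping property reduces, via the generic-fiber-support of the target, to the N\'eron property of $H$ itself.

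The main obstacle will be establishing this representability and quasi-compactness of $\sheafext^{1}_{S_{\RPS}}(H, \Gm)[p]$ globally over $S$: one must verify that the local and generic Picard-type descriptions patch coherently across the closed points of $S$, with no exotic behaviour (such as unbounded component groups, non-quasi-compactness, or failure of smoothness) surfacing at the ramified fibers. This step is where the assumption of perfect residue fields and the connectedness of the dual $H_{K}$ enter most strongly, through the vanishing $\nu(1)_{k(s)}^{\RP} = 0$ and the corresponding boundedness of the Picard-type construction.
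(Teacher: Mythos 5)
Your outline coincides with the paper's strategy (dualize, spread the connected dual $H_{K}$ out to a model $H$ over $S$ with connected fibres, take $\sheafhom_{S_{\RPS}}(H,\nu_{\infty}(1)_{S}^{\RP})$ as the candidate, and descend via Proposition \ref{0000}), but the two steps that carry all the weight are left as acknowledged gaps or argued by a mechanism that is not the correct one. For representability there is no ``standard result'' on $\sheafext^{1}_{S_{\RPS}}(H,\Gm)[p]$ to invoke over a Dedekind base. The paper's solution is concrete: since $H_{K,0}$ is smooth, connected, unipotent and killed by $p$ (this is exactly where the hypotheses ``killed by $p$'' and ``connected dual'' are consumed), it admits a presentation $0\to H_{K,0}\to\Ga^{n}\to\Ga\to 0$ by \cite[Section 10.2, Proposition 10]{BLR90}; spreading this out over a shrunken $S$ and applying $\sheafhom_{S_{\RP}}(\,\cdot\,,\nu_{\infty}(1)_{S}^{\RP})$ together with \cite[Theorem 3.2]{Kat86} exhibits $G$ as the kernel of an explicit morphism $(\Omega_{S}^{1})^{\RP}\to((\Omega_{S}^{1})^{\RP})^{n}$, which yields representability, relative perfect smoothness and quasi-compactness at once. (Your gluing of $H$ over all of $S$ from local N\'eron models is unnecessary and only complicates this step: one may shrink $S$ freely, since the finitely many excluded points are handled by the local theory inside the proof of Proposition \ref{0000}.)

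For the mapping property, the reduction ``to the N\'eron property of $H$ itself'' is not how the argument goes and would not work: $H$ is merely a spread-out model with connected fibres, not a N\'eron model of $H_{K}$, and no N\'eron property of $H$ is used. Likewise $\nu_{\infty}(1)_{S}^{\RP}$ is not ``supported on the generic fibre'': the cokernel of $\nu_{\infty}(1)_{S}^{\RP}\to j_{\ast}\nu_{\infty}(1)_{K}^{\RP}$ is the nonzero \'etale skyscraper $\bigoplus_{s}i_{s,\ast}\Q_{p}/\Z_{p}$, computed by tensoring the valuation sequence $0\to\Gm^{\RP}\to j_{\ast}\Gm^{\RP}\to\bigoplus_{s}i_{s,\ast}\Z\to 0$ with $\Q_{p}/\Z_{p}$ and using Proposition \ref{0065}. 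The adjunction $j_{\ast}\sheafhom_{K_{\RPS}}(H_{K},\,\cdot\,)\cong\sheafhom_{S_{\RPS}}(H,j_{\ast}\,\cdot\,)$ then reduces the N\'eron property of $G$ to the vanishing of $\sheafhom_{S_{\RPS}}(H,i_{s,\ast}\Q_{p}/\Z_{p})$, which holds precisely because each fibre $H_{s}$ is connected (Proposition \ref{0021}). This, rather than the vanishing of $\nu(1)$ over the perfect residue fields, is where the connectedness of the dual enters the mapping property; your proposal has the right ingredients in view but does not assemble them into a proof.
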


To prove this, we use the dual of $G_{K}$ in a crucial manner.

\begin{proof}
	By Proposition \ref{0000},
	it is enough to show that $G_{K}^{\RP}$ admits
	a quasi-compact relatively perfect N\'eron model
	over a dense open subscheme of $S$.
	Let $H_{K}$ be the dual of $G_{K}$.
	Write $H_{K}$ as the relative perfection
	of a quasi-compact smooth group scheme $H_{K, 0}$ over $K$.
	It is wound unipotent, connected and killed by $p$.
	After shrinking $S$ if necessary,
	choose a smooth separated group scheme $H_{0}$
	over $S$ with connected fibers
	and generic fiber $H_{K, 0}$.
	Set $H = H_{0}^{\RP}$.
	We will show that, after shrinking $S$ if necessary,
	the sheaf
	$G := \sheafhom_{S_{\RP}}(H, \nu_{\infty}(1)_{S}^{\RP})$
	is representable by a quasi-compact
	relatively perfectly smooth group scheme over $S$
	and satisfies the relatively perfect
	N\'eron mapping property for $G_{K}$
	when restricted to $S_{\RPS}$
	(Definition \ref{0046}).
	
	Representability:
	By \cite[Section 10.2, Proposition 10]{BLR90},
	there exists an exact sequence
	$0 \to H_{K, 0} \to \Ga^{n} \to \Ga \to 0$ over $K$.
	By spreading out and shrinking $S$ if necessary,
	this sequence extends to an exact sequence
	$0 \to H_{0} \to \Ga^{n} \to \Ga \to 0$ over $S$.
	Hence we have an exact sequence
	$0 \to H \to (\Ga^{\RP})^{n} \to \Ga^{\RP} \to 0$
	over $S$ (where the surjectivity
	of the last morphism follows from
	the same argument as the proof of
	\cite[Proposition 8.8]{BS24}).
	By \cite[Theorem 3.2]{Kat86},
	this induces an exact sequence
		\[
				0
			\to
				G
			\to
				(\Omega_{S}^{1})^{\RP}
			\to
				((\Omega_{S}^{1})^{\RP})^{n}
		\]
	over $S$,
	where the sheaf of absolute differentials
	$\Omega_{S}^{1} = \Omega_{S / \F_{p}}^{1}$ is viewed
	as a vector group over $S$.
	Since the latter two are $S$-schemes
	relatively perfectly of finite presentation,
	it follows that $G$ is representable
	by an $S$-scheme relatively perfectly of finite presentation.
	Shrinking $S$ if necessary,
	by Proposition \ref{0022} (all the statements),
	we can take $G$ to be relatively perfectly smooth.
	
	Mapping property:
	Now we work over $S_{\RPS}$.
	We have $j^{\ast} H \cong H_{K}$.
	Hence
		\[
				j_{\ast} G_{K}
			=
				j_{\ast}
				\sheafhom_{K_{\RPS}}(
					H_{K},
					\nu_{\infty}(1)_{K}^{\RP}
				)
			\cong
				\sheafhom_{S_{\RPS}}(
					H,
					j_{\ast}
					\nu_{\infty}(1)_{K}^{\RP}
				).
		\]
	We have an exact sequence
		\begin{equation} \label{0066}
				0
			\to
				\Gm^{\RP}
			\to
				j_{\ast} \Gm^{\RP}
			\to
				\bigoplus_{s \in S_{0}}
					i_{s, \ast} \Z
			\to
				0,
		\end{equation}
	where $S_{0}$ is the set of closed points of $S$
	and $i_{s, \ast} \Z$ is the \'etale group scheme over $S$
	trivial outside $s$ having $\Z$ as the fiber at $s$.
        We have
            \[
                    (j_{\ast} \Gm^{\RP}) \tensor \Q_{p} / \Z_{p}
                \cong
                    j_{\ast}
                    \nu_{\infty}(1)_{K}^{\RP}
            \]
        by \eqref{0015} and Proposition \ref{0065}
        (plus the fact that $j_{\ast}$ commutes with
        filtered direct limits
        by \cite[Tag 0738]{Sta26}).
	Hence tensoring $\Q_{p} / \Z_{p}$ with \eqref{0066},
        we have an exact sequence
		\begin{equation} \label{0060}
				0
			\to
				\nu_{\infty}(1)_{S}^{\RP}
			\to
				j_{\ast}
				\nu_{\infty}(1)_{K}^{\RP}
			\to
				\bigoplus_{s \in S_{0}}
					i_{s, \ast} \Q_{p} / \Z_{p}
			\to
				0,
		\end{equation}
	with $i_{s, \ast} \Q_{p} / \Z_{p}$ similarly defined.
	It induces an exact sequence
		\[
				0
			\to
				\sheafhom_{S_{\RPS}}(
					H,
					\nu_{\infty}(1)_{S}^{\RP}
				)
			\to
				\sheafhom_{S_{\RPS}}(
					H,
					j_{\ast}
					\nu_{\infty}(1)_{K}^{\RP}
				)
			\to
				\bigoplus_{s \in S_{0}}
					\sheafhom_{S_{\RPS}}(
						H,
						i_{s, \ast} \Q_{p} / \Z_{p}
					).
		\]
	We have
		$
				\sheafhom_{S_{\RPS}}(
					H,
					i_{s, \ast} \Q_{p} / \Z_{p}
				)
			=
				0
		$
	since $H_{s}$ is connected by Proposition \ref{0021}.
	Thus
		\[
				j_{\ast} G_{K}
			\cong
				\sheafhom_{S_{\RPS}}(
					H,
					\nu_{\infty}(1)_{S}^{\RP}
				),
		\]
	which shows the desired mapping property.
\end{proof}

The second fundamental case is:

\begin{proposition} \label{0004}
	Let $G_{K}$ be a wound unipotent group over $K$
	with \'etale dual.
	Then $G_{K}$ is connected and admits a N\'eron model.
\end{proposition}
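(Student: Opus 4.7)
The plan divides into two parts: showing $G_K$ is connected, then constructing a N\'eron model. Throughout, let $H_K$ denote the dual of $G_K$, which by hypothesis is \'etale. Since $H_K$ is a quasi-compact \'etale $K$-group scheme (being a relatively perfect wound unipotent group that is \'etale), it is a finite \'etale $p$-primary group scheme.

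\emph{Connectedness.} By double duality (\cite[Proposition~3.5]{Suz24}), $G_K^{\RP} \cong \sheafhom_{K_{\RP}}(H_K, \nu_{\infty}(1)_K^{\RP})$. Base-changing to $K^{\sep}$, the \'etale group $H_K$ splits as $\bigoplus_i \Z/p^{n_i}\Z$, whence
\[
    G_{K^{\sep}}^{\RP} \cong \prod_i \nu_{n_i}(1)_{K^{\sep}}^{\RP}.
\]
Each factor is the relative perfection of the unirational, hence geometrically connected, $K^{\sep}$-group $(\Res_{(K^{\sep})^{1/p^{n_i}}/K^{\sep}}\Gm)/\Gm$, so by Proposition~\ref{0021} each factor is connected. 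Therefore $G_{K^{\sep}}^{\RP}$, and hence $G_K$, is connected.

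\emph{N\'eron model.} By Proposition~\ref{0000}, it suffices to produce a relatively perfect N\'eron model of $G_K^{\RP}$ over $S$. Using Proposition~\ref{0022}, I would extend $H_K$ to a finite \'etale group scheme $H$ over a dense open $U \subset S$, and then investigate the sheaf $\mathcal{G}_U := j_{U,\ast} G_K^{\RP}$. Applying $\sheafhom_{U_{\RPS}}(H, \cdot)$ to the restriction of~(\ref{0060}) to $U$ produces an exact sequence whose left term $\sheafhom_{U_{\RPS}}(H, \nu_{\infty}(1)_U^{\RP})$ is, \'etale-locally on $U$ where $H$ splits, a product $\prod \nu_{n_i}(1)_U^{\RP}$, hence relatively perfectly smooth over $U$, and whose right term is of the form $\bigoplus_{s \in U_0} i_{s,\ast} H_s^{\vee}$, where $H_s^{\vee} := \sheafhom(H_s, \Q_p/\Z_p)$ is finite \'etale over $s$. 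I would argue that $\mathcal{G}_U$ is itself representable by a relatively perfectly smooth (non-quasi-compact) $U$-group scheme and verify the N\'eron mapping property by the same analysis as in Proposition~\ref{0003}. At each closed $s \in S \setminus U$, since $G_K$ is wound (hence does not contain $\Ga$), a local N\'eron model exists by~\cite[Section~10.2, Theorem~2]{BLR90}, whose relative perfection is a local relatively perfect N\'eron model by Proposition~\ref{0062}. Gluing $\mathcal{G}_U$ with these local models via a relatively perfect analog of~\cite[Section~10.1, Proposition~9]{BLR90} produces the desired global object.

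\emph{Main obstacle.} The crux is verifying that $\mathcal{G}_U$ is representable by a relatively perfectly smooth group scheme. Unlike in Proposition~\ref{0003}, where connectedness of $H_s$ forced the analog of the right-hand term to vanish, here $\bigoplus_{s} i_{s,\ast} H_s^{\vee}$ is non-trivial: the N\'eron model acquires a non-trivial component group at each closed point, which is precisely the mechanism behind Oesterl\'e's example failing to be quasi-compact. Establishing representability and smoothness of an extension of a relatively perfectly smooth scheme by an \'etale sheaf supported on closed points, and then the subsequent gluing of this object with the local N\'eron models at points of $S \setminus U$, is where the genuine technical work is concentrated.
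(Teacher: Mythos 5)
Your connectedness argument is essentially sound (and close in spirit to the paper's, which deduces connectedness of $\nu_{n}(1)_{K}^{\RP}$ from that of $\Gm^{(1/p^{n})}/\Gm$ via \cite[Proposition 8.13]{BS24}; note that Proposition \ref{0021} as stated concerns fibers over closed points of $S$, so the field-case citation you actually need is the one from \cite{BS24}). The problem is the second half. Your construction of the N\'eron model stops exactly where the work begins: you must represent $j_{U,\ast}G_{K}^{\RP}$, which after applying $\sheafhom_{U_{\RPS}}(H,\;\cdot\;)$ to \eqref{0060} sits in a left-exact sequence whose third term $\bigoplus_{s}i_{s,\ast}H_{s}^{\vee}$ no longer vanishes. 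You would then need (i) surjectivity onto that term (or control of the image), (ii) representability and relatively perfect smoothness of an extension of a relatively perfectly smooth group scheme by \'etale skyscrapers, and (iii) a gluing statement for relatively perfect N\'eron models analogous to \cite[Section 10.1, Proposition 9]{BLR90}, none of which is supplied by the paper's toolkit as it stands (Proposition \ref{0003} relies precisely on the vanishing of this term). You flag this yourself as ``where the genuine technical work is concentrated,'' but that work is the proof; as written, the argument is incomplete.

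The paper avoids all of this with a short reduction you did not make. Since the stated conclusion (existence of a N\'eron model) descends along finite Galois extensions by \cite[Section 10.1, Proposition 4]{BLR90}, one may pass to a finite Galois extension over which the finite \'etale $p$-primary dual becomes constant, and then (after splitting into cyclic factors) assume $H_{K}\cong\Z/p^{n}\Z$, so that $G_{K}^{\RP}\cong\nu_{n}(1)_{K}^{\RP}$. Comparing \eqref{0015} with the relative perfection of $0\to\Gm\to\Gm^{(1/p^{n})}\to\Gm^{(1/p^{n})}/\Gm\to 0$ identifies $\nu_{n}(1)_{K}^{\RP}$ with $(\Gm^{(1/p^{n})}/\Gm)^{\RP}$, i.e.\ with the relative perfection of Oesterl\'e's example, whose N\'eron model is already known to exist by \cite[Section 10.1, Example 11]{BLR90}; Propositions \ref{0062} and \ref{0000} then finish the proof. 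I recommend you adopt this reduction rather than attempt to repair the representability and gluing steps in your outline.
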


\begin{proof}
	Let $H_{K}$ be the dual of $G_{K}$.
	By taking a finite Galois extension of $K$
	and using \cite[Section 10.1, Proposition 4]{BLR90},
	we may assume that $H_{K} \cong \Z / p^{n} \Z$ for some $n \ge 1$.
	Then $G_{K}^{\RP} \cong \nu_{n}(1)_{K}^{\RP}$.
	Consider the exact sequence
		\[
				0
			\to
				\Gm
			\to
				\Gm^{(1 / p^{n})}
			\to
				\Gm^{(1 / p^{n})} / \Gm
			\to
				0
		\]
	of smooth group schemes over $K$.
	The first morphism, after relative perfection,
	becomes the $p^{n}$-th power map
	$\Gm^{\RP} \to \Gm^{\RP}$.
	Hence the entire sequence becomes an exact sequence
		\[
				0
			\to
				\Gm^{\RP}
			\stackrel{p^{n}}{\to}
				\Gm^{\RP}
			\to
				(\Gm^{(1 / p^{n})} / \Gm)^{\RP}
			\to
				0
		\]
	after relative perfection.
	Comparing this with \eqref{0015},
	we know that
		\begin{equation} \label{0074}
				\nu_{n}(1)_{K}^{\RP}
			\cong
				(\Gm^{(1 / p^{n})} / \Gm)^{\RP}.
		\end{equation}
	Since $\Gm^{(1 / p^{n})} / \Gm$ has a N\'eron model
	by \cite[Section 10.1, Example 11]{BLR90},
	its relative perfection has a relatively perfect N\'eron model
	by Proposition \ref{0062}.
	Thus $G_{K}$ has a N\'eron model
	by Proposition \ref{0000}.
	Since the relative perfection of a smooth $K$-scheme
	does not change the set of (open) irreducible components
	by \cite[Proposition 8.13]{BS24},
	the connectedness of $\Gm^{(1 / p^{n})} / \Gm$
	implies the connectedness of $\nu_{n}(1)_{K}^{\RP}$
	and hence of $G_{K}$.
\end{proof}

We will see later in Proposition \ref{0007} that
$G_{K}$ having an \'etale dual is equivalent to
$G_{K}$ being unirational.
But we do not have to use this equivalence
for proving Theorem \ref{0050}.

\begin{remark}
	If $r > 1$, an analogue of the exact sequence \eqref{0060}
	still exists by purity
	for logarithmic Hodge-Witt sheaves (\cite{Shi07})
		\[
				0
			\to
				\nu_{\infty}(r)_{S}^{\RP}
			\to
				j_{\ast} \nu_{\infty}(r)_{K}^{\RP}
			\to
				\bigoplus_{s \in S_{0}}
					i_{s, \ast} \nu_{\infty}(r - 1)_{s}^{\RP}
			\to
				0
		\]
	as an exact sequence of sheaves over $S_{\RPS}$.
	But $\nu_{\infty}(r - 1)_{s}^{\RP}$ is no longer \'etale,
	so the proof of Proposition \ref{0003} breaks down at this point.
\end{remark}


\section{Existence of N\'eron models: d\'evissage}
\label{0057}

Assume that $S \ne \Spec K$ and $r = 1$.
Now we will prove the first half
of Theorem \ref{0050} \eqref{0052},
namely the part for Conjecture \ref{0012} \eqref{0047}.
We first need to check that
\cite[Section 10.2, Lemma 12]{BLR90}
on filtering connected wound unipotent groups
preserves the connectedness of the duals:

\begin{proposition} \label{0073}
	Let $G_{K}$ be a connected wound unipotent group over $K$
	with connected dual.
	\begin{enumerate}
	\item \label{0071}
		Any closed subgroup of $G_{K}$ or $G_{K}^{\RP}$ has connected dual.
	\item \label{0072}
		$G_{K}$ admits a finite filtration by closed subgroups
		whose successive subquotients are connected wound unipotent groups
		killed by $p$ with connected dual.
	\end{enumerate}
\end{proposition}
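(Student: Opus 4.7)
The plan is to handle both parts using the duality formalism of Section 2: part (1) by exactness of duality, and part (2) by combining the BLR filtration with Proposition \ref{0019}.

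For part (1), I would use that the duality of \cite[Proposition 3.5]{Suz24}---sheaf-Hom into $\nu_{\infty}(r)_{K}^{\RP}$---is an exact contravariant auto-equivalence of the abelian category of relatively perfect wound unipotent groups over $K$, which is closed under extensions in $\Ab(K_{\RP})$ by \cite[Proposition 8.12]{BS24}. Given a closed subgroup $H_{K} \subset G_{K}$, the quotient $G_{K}^{\RP}/H_{K}^{\RP}$ is again a relatively perfect wound unipotent group, so dualizing the short exact sequence $0 \to H_{K}^{\RP} \to G_{K}^{\RP} \to G_{K}^{\RP}/H_{K}^{\RP} \to 0$ yields a surjection from the dual of $G_{K}$ onto the dual of $H_{K}$. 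Since the dual of $G_{K}$ is connected by hypothesis and surjective images of smooth connected group schemes are connected, the dual of $H_{K}$ is connected. The same argument applies verbatim to closed subgroups of $G_{K}^{\RP}$ in the ambient abelian category.

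For part (2), I would apply \cite[Section 10.2, Lemma 12]{BLR90} to $G_{K}$ to obtain a filtration $G_{K} = G^{(0)} \supset G^{(1)} \supset \cdots \supset G^{(n)} = 0$ by smooth connected closed subgroups whose successive quotients $Q_{i} := G^{(i)}/G^{(i+1)}$ are connected wound unipotent and killed by $p$. By part (1), each $G^{(i)}$ has connected dual. It then remains to show the same for each $Q_{i}$, i.e., via Proposition \ref{0019}, that $Q_{i}$ has no non-zero unirational subgroup. A non-zero unirational subgroup $U \subset Q_{i}$ would lift to a closed subgroup $\tilde{U} \subset G^{(i)} \subset G_{K}$ sitting in $0 \to G^{(i+1)} \to \tilde{U} \to U \to 0$; the goal would then be to use Proposition \ref{0007} together with the structure of the filtration inductively to produce a non-trivial unirational subgroup inside $G_{K}$, contradicting Proposition \ref{0019}.

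The main obstacle is precisely this last extraction step: since $G^{(i+1)}$ itself contains no non-trivial unirational subgroup (by part (1) and Proposition \ref{0019}), $\tilde{U}$ is a priori only an extension of unirational by ``non-unirational'', and neither Proposition \ref{0007} nor the naive lifting directly yields a unirational subgroup of $\tilde{U}$. Pulling back a unirational covering of $U$ along $\tilde{U} \twoheadrightarrow U$ (for instance of the form produced by \cite[Proposition 2.5]{Ach19b}) gives a covering of $\tilde{U}$ by a unirational extension, and controlling the interaction of this covering with the filtration of $G^{(i+1)}$ should then expose the desired unirational subgroup. A dual route---applying Lemma 12 to the dual of $G_{K}$ and dualizing back---produces a filtration of $G_{K}$ whose subquotients have connected dual automatically by double duality, but only shifts the delicate point to establishing the connectedness of those subquotients.
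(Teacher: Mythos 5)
Your part (1) contains a genuine gap. You assert that for a closed subgroup $H_{K}\subset G_{K}$ the quotient $G_{K}^{\RP}/H_{K}^{\RP}$ is ``again a relatively perfect wound unipotent group'' and hence that dualizing produces a surjection from the dual of $G_{K}$ onto the dual of $H_{K}$. But quotients of wound unipotent groups by closed subgroups need not be wound, so the category of relatively perfect \emph{wound} unipotent groups is not abelian and the duality $\sheafhom_{K_{\RP}}(\;\cdot\;,\nu_{\infty}(1)_{K}^{\RP})$ is not an exact auto-equivalence of it; \cite[Proposition 8.12]{BS24} only gives abelianness for relative perfections of all quasi-compact smooth groups. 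Concretely, the cokernel of the map from the dual of $G_{K}$ to the dual of $H_{K}$ is $\sheafext^{1}_{K_{\RP}}(G_{K}^{\RP}/H_{K}^{\RP},\nu_{\infty}(1)_{K}^{\RP})$ (one does get surjectivity onto this Ext sheaf, because $\sheafext^{1}_{K_{\RP}}(G_{K}^{\RP},\nu_{\infty}(1)_{K}^{\RP})=0$ by woundness of $G_{K}$ and \cite[Proposition 3.4]{Suz24}), and this cokernel is nonzero exactly when the quotient fails to be wound. The paper's proof saves the conclusion not by surjectivity but by observing that this $\sheafext^{1}$ term has a finite filtration with subquotients $\Ga^{\RP}$ (\cite[Proposition 3.3]{Suz24}), hence is connected, so the dual of $H_{K}$ is an extension of a connected group by a quotient of the connected dual of $G_{K}$. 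Your argument as written proves less than you need and the missing ingredient is precisely this control of the Ext term.

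For part (2), you correctly identify that the obstacle is showing the subquotients of the BLR filtration have connected dual, and you acknowledge that you cannot complete the lifting of a unirational subgroup of a subquotient to one of $G_{K}$; indeed that step does not go through, since the relevant extension is of a unirational group by a non-unirational one and neither Proposition \ref{0007} nor \cite[Proposition 2.5]{Ach19b} applies. The paper takes a different and purely duality-theoretic route that avoids unirationality (and hence Propositions \ref{0019} and \ref{0007}, which appear later) altogether: it filters $G_{K}$ by the identity component of the maximal smooth closed subgroup $(G_{K}[p]^{\natural})^{0}$ of the $p$-torsion, which has connected dual by part (1), and shows that the quotient $G_{K}/(G_{K}[p]^{\natural})^{0}$ again has connected dual by splicing together the connectedness of the dual of its image in $G_{K}^{\RP}$ (part (1) again) with the connectedness of the dual of the finite \'etale kernel $\pi_{0}(G_{K}[p])$ (Proposition \ref{0004}), using exactness of duality for short exact sequences of wound groups; induction on the exponent then finishes. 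You should rework part (2) along these lines rather than via unirational subgroups.
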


\begin{proof}
	\eqref{0071}
	Let $0 \to G_{K}' \to G_{K}^{\RP} \to G_{K}'' \to 0$ be an exact sequence
	of relatively perfect group schemes over $K$.
	It induces an exact sequence
		\begin{align*}
			&
					\sheafhom_{K_{\RP}}(G_{K}^{\RP}, \nu_{\infty}(1))
				\to
					\sheafhom_{K_{\RP}}(G_{K}', \nu_{\infty}(1))
			\\
			&	\to
					\sheafext^{1}_{K_{\RP}}(G_{K}'', \nu_{\infty}(1))
				\to
					\sheafext^{1}_{K_{\RP}}(G_{K}^{\RP}, \nu_{\infty}(1))
		\end{align*}
	in $\Ab(K_{\RP})$,
	where $\sheafext^{1}_{K_{\RP}}$ denotes
	the first sheaf-Ext functor for $\Ab(K_{\RP})$.
	The first term is the dual of $G_{K}$
	and hence connected by assumption.
	The third term has a finite filtration
	with successive subquotients all isomorphic to $\Ga^{\RP}$
	by \cite[Proposition 3.3]{Suz24}
	and, in particular, connected.
	The fourth term is zero
	by \cite[Proposition 3.4]{Suz24} and the woundness of $G_{K}$.
	These together imply the connectedness of the second term as desired.
	
	\eqref{0072}
	Let $G_{K}[p]^{\natural} \subset G_{K}[p]$ be
	the maximal smooth closed subgroup (\cite[Definition 8.3]{BS24})
	and $(G_{K}[p]^{\natural})^{0}$ its identity component.
	It is connected wound unipotent with connected dual
	by \eqref{0071}.
	If $G_{K}$ is killed by some $p^{n}$,
	then $G_{K} / (G_{K}[p]^{\natural})^{0}$ is killed by $p^{n - 1}$.
	Therefore, by induction,
	it is enough to show that
	$G_{K} / (G_{K}[p]^{\natural})^{0}$ is connected wound with connected dual.
	The connectedness follows from that of $G_{K}$.
	The exact sequence
		\[
				0
			\to
				G_{K}[p] / (G_{K}[p]^{\natural})^{0}
			\to
				G_{K} / (G_{K}[p]^{\natural})^{0}
			\stackrel{p}{\to}
				G_{K}
		\]
	shows that
	$G_{K} / (G_{K}[p]^{\natural})^{0}$
	is wound.
	We have
	$(G_{K}[p]^{\natural})^{\RP} \isomto G_{K}[p]^{\RP}$
	by \cite[Proposition 8.4]{BS24}.
	Hence relative perfection induces an exact sequence
		\[
				0
			\to
				\pi_{0}(G_{K}[p])
			\to
				\bigl(
					G_{K} / (G_{K}[p]^{\natural})^{0}
				\bigr)^{\RP}
			\to
				G_{K}^{\RP}.
		\]
	The image of the second morphism
		$
				\bigl(
					G_{K} / (G_{K}[p]^{\natural})^{0}
				\bigr)^{\RP}
			\to
				G_{K}^{\RP}
		$
	is connected wound with connected dual
	by \eqref{0071}.
	By Proposition \ref{0004},
	$\pi_{0}(G_{K}[p])$ has connected dual.
	Since a short exact sequence of relatively perfect wound unipotent groups
	gives rise to a short exact sequence of their duals
	(\cite[Proposition 3.4]{Suz24}),
	we know that the term
		$
			\bigl(
				G_{K} / (G_{K}[p]^{\natural})^{0}
			\bigr)^{\RP}
		$
	has connected dual.
\end{proof}

Now we can use Proposition \ref{0003}:

\begin{proposition} \label{0006}
	Let $G_{K}$ be a wound unipotent group over $K$
	with connected dual.
	Then it admits a quasi-compact N\'eron model over $S$.
\end{proposition}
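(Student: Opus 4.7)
The plan is a dévissage: reduce to the connected case via the component group sequence, apply the filtration of Proposition \ref{0073}\eqref{0072} to obtain layers killed by $p$ with connected dual, invoke the fundamental case Proposition \ref{0003} on each layer, and glue back together using the extension stability of quasi-compact relatively perfect N\'eron models from Proposition \ref{0018}\eqref{0017}, passing between ordinary and relatively perfect N\'eron models via Propositions \ref{0062} and \ref{0000}.

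Concretely, I first treat the connected-\'etale decomposition $0 \to G_K^0 \to G_K \to \pi_0(G_K) \to 0$. Since $G_K$ is quasi-compact and smooth, the component group $\pi_0(G_K)$ is finite \'etale and $p$-primary, and admits a finite \'etale (hence quasi-compact) N\'eron model by the standard argument invoked in the opening paragraph of the proof of Proposition \ref{0000}. Its relative perfection therefore admits a quasi-compact relatively perfect N\'eron model by Proposition \ref{0062}. The identity component $G_K^0$ is connected wound unipotent, and by Proposition \ref{0073}\eqref{0071} it again has connected dual.

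For the connected case, Proposition \ref{0073}\eqref{0072} supplies a finite filtration of $G_K^0$ whose successive subquotients are connected wound unipotent groups killed by $p$ with connected dual. Each such subquotient admits a quasi-compact N\'eron model by Proposition \ref{0003}, hence a quasi-compact relatively perfect N\'eron model by Proposition \ref{0062}. Iterating the quasi-compact form of Proposition \ref{0018}\eqref{0017} along the filtration assembles a quasi-compact relatively perfect N\'eron model of $(G_K^0)^{\RP}$. Applying Proposition \ref{0018}\eqref{0017} once more to the short exact sequence of relative perfections induced by the connected-\'etale sequence (which is exact by \cite[Proposition 8.12]{BS24}) then yields a quasi-compact relatively perfect N\'eron model of $G_K^{\RP}$. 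Finally, Proposition \ref{0000} produces an honest N\'eron model of $G_K$; its quasi-compactness can be read off from the construction in that proof, where the relative identity component is quasi-compact and the remaining \'etale part is controlled by the finite \'etale N\'eron model of $\pi_0(G_K)$.

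The genuinely new content has been done already, in Proposition \ref{0003} (the fundamental duality-based construction) and Proposition \ref{0073}\eqref{0072} (the filtering lemma); the present argument is essentially organizational. The one point requiring vigilance is that quasi-compactness is preserved at every stage of the dévissage, which is exactly what the quasi-compact clause of Proposition \ref{0018} together with the explicit construction inside the proof of Proposition \ref{0000} deliver, so I do not anticipate any substantive obstacle beyond careful bookkeeping.
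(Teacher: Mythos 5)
Your argument is correct and follows essentially the same dévissage as the paper: reduce to the connected case via the component group, filter by Proposition \ref{0073} \eqref{0072}, apply Proposition \ref{0003} to the graded pieces, and reassemble using stability of (quasi-compact) N\'eron models under extensions — the paper simply invokes \cite[Section 7.5, Proposition 1 (b)]{BLR90} directly on ordinary N\'eron models where you route through Proposition \ref{0018} and Propositions \ref{0062}/\ref{0000}, which amounts to the same thing since Proposition \ref{0018} is itself proved by that reduction. The only minor points are that you apply Proposition \ref{0073} \eqref{0071} to a possibly disconnected $G_{K}$ (harmless, since the dual of $G_{K}^{0}$ is a quotient of the connected dual of $G_{K}$), and that the quasi-compactness of the special fibers of the local N\'eron models is governed by their own component groups rather than by $\pi_{0}(G_{K})$ — it follows instead from Proposition \ref{0021} applied to the quasi-compact relatively perfect model.
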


\begin{proof}
	By \cite[Section 7.5, Proposition 1 (b)]{BLR90},
	we may assume that $G_{K}$ is connected.
	The same reference together with Proposition \ref{0073} \eqref{0072}
	shows that we may further assume that $G_{K}$ is killed by $p$.
	This case is Proposition \ref{0003}.
\end{proof}

\begin{proposition} \label{0005}
	Let $G_{K}$ be a wound unipotent group.
	Then it admits a N\'eron model over $S$.
\end{proposition}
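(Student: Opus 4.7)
The plan is to d\'evissage $G_K$ into two pieces whose duals are, respectively, \'etale and connected, so that the fundamental cases Propositions \ref{0004} and \ref{0006} apply, and then to assemble them via Proposition \ref{0018}. First I would pass to the relative perfection: by Proposition \ref{0000} it suffices to construct a relatively perfect N\'eron model of $G_K^{\RP}$ over $S$.

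Let $H_K$ denote the dual of $G_K$, a relatively perfect wound unipotent group by \cite[Proposition 3.5]{Suz24}, and consider its connected-\'etale exact sequence
\[
0 \to H_K^0 \to H_K \to \pi_0(H_K) \to 0.
\]
Applying $\sheafhom_{K_{\RP}}(-,\nu_{\infty}(1)_K^{\RP})$ and using double duality \cite[Proposition 3.5]{Suz24} to recognize the middle term of the resulting long exact sequence as $G_K^{\RP}$, together with the vanishing of $\sheafext^1_{K_{\RP}}(-,\nu_{\infty}(1)_K^{\RP})$ on wound unipotent groups \cite[Proposition 3.4]{Suz24} (applied to $\pi_0(H_K)$, which is \'etale and hence trivially wound), I would obtain a short exact sequence
\[
0 \to G_K' \to G_K^{\RP} \to G_K'' \to 0
\]
of relatively perfect wound unipotent groups, in which the dual of $G_K'$ is the \'etale group $\pi_0(H_K)$ and the dual of $G_K''$ is the connected group $H_K^0$.

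To conclude, I would invoke \cite[Propositions 8.7 and 8.13]{BS24} to descend $G_K'$ and $G_K''$ to unique wound unipotent groups with the same duals, apply Proposition \ref{0004} and Proposition \ref{0006} to produce N\'eron models for these two pieces, and pass back to the relative perfection via Proposition \ref{0062}. Then Proposition \ref{0018} \eqref{0017} gives a relatively perfect N\'eron model for $G_K^{\RP}$, and a final application of Proposition \ref{0000} produces the desired N\'eron model for $G_K$. The main obstacle is the dualization step: one needs the dualized connected-\'etale sequence to remain short exact on the right, and this is precisely where the vanishing $\sheafext^1_{K_{\RP}}(\pi_0(H_K), \nu_{\infty}(1)_K^{\RP}) = 0$ from \cite[Proposition 3.4]{Suz24} is essential.
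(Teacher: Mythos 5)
Your proposal is correct and follows essentially the same route as the paper: the paper's own (one-sentence) proof likewise dualizes the connected-\'etale sequence of $H_{K}$, applies Propositions \ref{0004} and \ref{0006} to the two resulting pieces, and assembles them with Proposition \ref{0018} \eqref{0017}. Your only elaboration is to justify the right-exactness of the dualized sequence via the vanishing of $\sheafext^{1}$ from \cite[Proposition 3.4]{Suz24}, which is exactly the mechanism the paper uses elsewhere (in the proof of Proposition \ref{0073}).
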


\begin{proof}
	This follows from
	Propositions \ref{0004} and \ref{0006}
	by Proposition \ref{0018} \eqref{0017}
	and the connected-\'etale sequence
	for the dual of $G_{K}$.
\end{proof}

\begin{theorem} \label{0009}
	Let $G_{K}$ be a connected smooth
	algebraic group over $K$ not containing $\Ga$.
	Then it has a N\'eron model over $S$.
\end{theorem}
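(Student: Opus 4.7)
\emph{Proof plan.} The strategy is a d\'evissage reducing $G_{K}$ to three classes of groups whose N\'eron models over the global base $S$ are already known to exist: abelian varieties (by \cite[Section 1.4, Theorem 3]{BLR90}), tori and more generally semi-abelian varieties (by \cite[Section 10.1, Proposition 6]{BLR90}), and wound unipotent groups (by Proposition \ref{0005} above). Assembly of the pieces is via \cite[Section 7.5, Proposition 1]{BLR90} in its form valid for not necessarily quasi-compact N\'eron models, as already invoked in the proof of Proposition \ref{0000}: the existence of a N\'eron model is preserved under short exact sequences.

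Concretely, I would invoke the Chevalley structure theorem to obtain a canonical exact sequence $0 \to L \to G_{K} \to A \to 0$, where $L = G_{K}^{\mathrm{aff}}$ is the maximal smooth connected affine $K$-subgroup and $A$ is the corresponding quotient. For commutative $G_{K}$, the quotient $A$ is in fact an abelian variety over any field $K$: any affine subgroup of $A$ would lift in $G_{K}$ to an extension of two affine groups and hence to an affine group, contradicting maximality of $L$; combined with anti-affineness this forces $A$ to be proper. The affine group $L$ further fits into $0 \to T \to L \to U \to 0$ with $T$ a maximal $K$-torus and $U$ unipotent. Since $\Ext^{1}(\Ga, T) = 0$, any copy of $\Ga$ in $U$ would lift to $L$, and hence to $G_{K}$, contradicting the hypothesis; so $U$ is wound.

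Two successive applications of the extension property then finish the argument: the sequence $0 \to T \to L \to U \to 0$ yields a N\'eron model of $L$ from those of $T$ and $U$, after which $0 \to L \to G_{K} \to A \to 0$ yields a N\'eron model of $G_{K}$ from those of $L$ and $A$. The main technical point to verify is the Chevalley decomposition over a possibly imperfect $K$ (the substantive case here, since $[K:K^{p}] = p$); but for \emph{commutative} smooth connected groups the classical form with abelian-variety quotient remains valid, so no refined pseudo-reductive machinery is required, and the rest is a routine invocation of the existing results.
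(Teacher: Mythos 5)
Your overall d\'evissage (split off a torus and an abelian variety, reduce the remaining piece to the wound unipotent case of Proposition \ref{0005}, and assemble with the extension property for N\'eron models) is the same as the paper's, but the structural input you rely on is false, and it fails precisely where the imperfectness of $K$ matters. Over an imperfect field the Chevalley decomposition in the form you state --- $0 \to L \to G_{K} \to A \to 0$ with $L = G_{K}^{\mathrm{aff}}$ the maximal smooth connected affine subgroup and $A$ an abelian variety --- does \emph{not} hold even for commutative groups: Totaro's pseudo-abelian varieties are smooth connected commutative groups whose maximal smooth connected affine subgroup is trivial but which are non-split extensions of a nonzero wound unipotent group by an abelian variety, hence not proper. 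They contain no $\Ga$, so they satisfy the hypothesis of the theorem, and your argument would wrongly declare them to be abelian varieties. Your lifting argument only shows that $A$ has no nontrivial smooth connected affine subgroup, which does not imply properness; and $G_{K}/G_{K}^{\mathrm{aff}}$ is not anti-affine in general (a pseudo-abelian variety surjects onto a nonzero affine group), so the appeal to anti-affineness is unjustified.

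The decomposition that does exist over an arbitrary field, and the one the paper uses, is the maximal abelian variety \emph{quotient} $G_{K, 1} \onto A_{K}$ (\cite[Section 9.2, Theorem 1]{BLR90}, applied after first quotienting by the maximal torus, made split by a preliminary finite Galois extension); its kernel $N_{K}$ is unipotent but in general \emph{not smooth}, so Proposition \ref{0005} does not apply to it directly. This is exactly where the relatively perfect machinery you hoped to avoid becomes essential: one replaces $N_{K}$ by its maximal smooth closed subgroup $N_{K}^{\natural}$, which is wound and satisfies $(N_{K}^{\natural})^{\RP} \isomto N_{K}^{\RP}$, and then runs the extension argument on $0 \to N_{K}^{\RP} \to G_{K, 1}^{\RP} \to A_{K}^{\RP}$, applying Proposition \ref{0018} to the image of $G_{K, 1}^{\RP}$ in $A_{K}^{\RP}$ and descending to an honest N\'eron model via Proposition \ref{0000}. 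Your treatment of the torus step (any $\Ga$ in $G_{K}/T_{K}$ lifts to $G_{K}$ because extensions of $\Ga$ by a split torus are trivial) is correct and matches the paper.
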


\begin{proof}
	First note that it is enough to prove the statement
	over a finite Galois extension of $K$
	by \cite[Section 10.1, Proposition 4]{BLR90}.
	Let $T_{K}$ be the maximal torus of $G_{K}$,
	which has a N\'eron model.
	As above, we may assume that $T_{K}$ is split.
	Let $G_{K, 1} = G_{K} / T_{K}$.
	Then any morphism $\Ga \to G_{K, 1}$ defines
	an extension of $\Ga$ by $\Gm^{\dim T_{K}}$,
	which has to be a split extension.
	Hence the morphism $\Ga \to G_{K, 1}$
	factors through $G_{K}$
	and thus is zero.
	Hence $G_{K, 1}$ does not contain $\Ga$.
	Let $A_{K}$ be the maximal abelian variety
	quotient of $G_{K, 1}$
	(\cite[Section 9.2, Theorem 1]{BLR90})
	and $N_{K}$ the kernel of $G_{K, 1} \onto A_{K}$.
	Then $N_{K}$ is unipotent
	(not necessarily smooth).
        Let $N_{K}^{\natural} \subset N_{K}$ be
        the maximal smooth closed subgroup (\cite[Definition 8.3]{BS24}),
        which is wound and induces an isomorphism
        $(N_{K}^{\natural})^{\RP} \isomto N_{K}^{\RP}$
        (\cite[Proposition 8.4]{BS24}).
        Hence $N_{K}^{\RP}$ is wound unipotent
	and hence has a relatively perfect N\'eron model
	by Proposition \ref{0005}.
	We have an exact sequence
	$0 \to N_{K}^{\RP} \to G_{K, 1}^{\RP} \to A_{K}^{\RP}$.
	Since $A_{K}^{\RP}$ has a quasi-compact
	relatively perfect N\'eron model,
	so does the image of the morphism
	$G_{K, 1}^{\RP} \to A_{K}^{\RP}$
	by Proposition \ref{0018} \eqref{0016}.
	Therefore $G_{K, 1}^{\RP}$ has
	a relatively perfect N\'eron model
	by Proposition \ref{0018} \eqref{0017}.
	Hence $G_{K}$ has a N\'eron model by the same reason.
\end{proof}


\section{Quasi-compactness of N\'eron models}
\label{0058}

Assume that $S \ne \Spec K$ and $r = 1$.
We will prove the second half
of Theorem \ref{0050} \eqref{0052},
namely the part for Conjecture \ref{0012} \eqref{0048}.
We need to describe wound unipotent groups
with no non-trivial unirational subgroup
in terms of duals:

\begin{proposition} \label{0019}
	Let $G_{K}$ be a wound unipotent group over $K$.
	Then it has no non-trivial unirational subgroup
	if and only if its dual is connected.
\end{proposition}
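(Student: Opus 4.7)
The plan is to establish both directions using Achet's covering theorem \cite[Proposition 2.5]{Ach19b} together with the duality and double-duality from \cite[Propositions 3.4 and 3.5]{Suz24} and the identification $\nu_{n}(1)_{K}^{\RP} \cong (\Gm^{(1 / p^{n})} / \Gm)^{\RP}$ from the proof of Proposition \ref{0004}.

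For the implication ``dual connected $\Rightarrow$ no non-zero unirational subgroup'', suppose $U_{K} \subset G_{K}$ is a non-zero unirational subgroup. Achet's theorem supplies a surjection $V_{K} \onto U_{K}$ from a product $V_{K} = \prod_{i} (\Res_{K^{1 / p^{n_{i}}} / K} \Gm) / \Gm$. Passing to relative perfections (exact on wound unipotent groups by \cite[Proposition 8.12]{BS24}) and then to duals (exact by \cite[Proposition 3.4]{Suz24}) converts the composite $V_{K}^{\RP} \onto U_{K}^{\RP} \into G_{K}^{\RP}$ into a composite $G_{K}^{\vee} \onto U_{K}^{\vee} \into V_{K}^{\vee}$. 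By Proposition \ref{0004}, $V_{K}^{\vee} \cong \prod_{i} \Z / p^{n_{i}} \Z$ is \'etale, so the non-zero quotient $U_{K}^{\vee}$ of $G_{K}^{\vee}$ is \'etale and non-zero, whence $G_{K}^{\vee}$ is not connected.

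For the converse, suppose $G_{K}^{\vee}$ is not connected. After a finite Galois extension $K' / K$, the non-zero component group $\pi_{0}(G_{K}^{\vee})$ admits a non-zero constant quotient $\Z / p^{n} \Z$ for some $n \ge 1$. Dualizing the resulting surjection $G_{K'}^{\vee} \onto \Z / p^{n} \Z$ via double duality together with Proposition \ref{0004} produces a non-zero morphism $(\Gm^{(1 / p^{n})} / \Gm)_{K'}^{\RP} \to G_{K'}^{\RP}$. The inverse-limit description of relative perfection and the finite presentation of $G_{K'}$ then force this morphism to factor through a non-zero morphism $\varphi \colon (\Gm^{(1 / p^{n})} / \Gm)_{K'}^{(1 / p^{m})} \to G_{K'}$ for some $m \ge 0$. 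The source is the Weil restriction of a unirational group along the Frobenius $F^{m} \colon \Spec K' \to \Spec K'$, and is thus itself unirational (a dominant rational parameterization of the source Weil-restricts to one of the target from an affine space of suitable dimension). Hence the image of $\varphi$ is a non-zero unirational subgroup of $G_{K'}$, and summing its Galois translates produces a non-zero unirational subgroup of $G_{K}$, as this sum is Galois-stable and is unirational as a quotient of a product of conjugate unirational groups.

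The main obstacle will be the converse direction: descending from a morphism of relatively perfect sheaves to a morphism of smooth schemes, verifying that Weil restriction along Frobenius preserves unirationality, and carrying out the Galois descent step. Each piece is either folklore or straightforward, but assembling them without circular reference to the classification Theorem \ref{0051} or to Proposition \ref{0007} (which are proved later in the paper) requires careful bookkeeping.
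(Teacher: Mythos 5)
Your proof follows the paper's own route---Achet's covering result \cite[Proposition 2.5]{Ach19b} combined with the duality of \cite{Suz24}---and is essentially correct; your converse direction merely spells out (factoring through a finite level of the relative perfection, unirationality of Weil restrictions along Frobenius, Galois descent) what the paper compresses into a short chain of equivalences after reducing once and for all to $K^{\sep}$. The one point to repair is in the forward direction: Achet's result is stated over a separably closed field, so before invoking it you must base-change to $K^{\sep}$ (or a sufficiently large finite separable extension), which is harmless because both conditions are insensitive to separable extensions---by \cite[Section 10.3, Remark 4]{BLR90} for unirationality, and because connectedness of the dual is a geometric property.
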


\begin{proof}
	Let $H_{K}$ be the dual of $G_{K}$.
	The stated conditions are stable
	under finite separable extensions of $K$
	by \cite[Section 10.3, Remark 4]{BLR90}.
	Hence we may replace $K$ by $K^{\sep}$.
	A wound unipotent group over $K = K^{\sep}$
	with $[K : K^{p}] = p$ is unirational
	if and only if it is a quotient
	of a finite product of groups of the form
	$\Gm^{(1 / p^{n})} / \Gm$ with various $n \ge 1$
	by \cite[Proposition 2.5]{Ach19b}.
	Therefore $G_{K}$ has no non-trivial unirational subgroup
	if and only if
	$G_{K}$ has no non-zero morphism from $\Gm^{(1 / p^{n})} / \Gm$
	if and only if
	$G_{K}^{\RP}$ has no non-zero morphism from $\nu(1)_{K}^{\RP}$
	(see \eqref{0074})
	if and only if
	$H_{K}$ has no non-zero morphism to $\Z / p \Z$
	if and only if
	$H_{K}$ is connected.
\end{proof}

\begin{theorem} \label{0013}
	Let $G_{K}$ be a connected smooth algebraic group over $K$
	with no non-zero unirational subgroup.
	Then it has a quasi-compact N\'eron model over $S$.
\end{theorem}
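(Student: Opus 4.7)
The plan is to mimic the d\'evissage in the proof of Theorem \ref{0009}, using the quasi-compact halves of all the relevant propositions (notably Proposition \ref{0018}) at each step. First I would dispose of the torus and additive parts of $G_{K}$: every non-zero torus over $K$ is unirational (a Weil restriction of $\Gm$ along a splitting field is an open subscheme of an affine space, hence rational, and maps onto the torus), and $\Ga$ is itself rational. Consequently the hypothesis that $G_{K}$ has no non-zero unirational subgroup forces the maximal torus $T_{K}$ of $G_{K}$ to vanish and rules out $\Ga \subset G_{K}$. So $G_{K}$ already plays the role of the group $G_{K,1}$ in the proof of Theorem \ref{0009}, and there is no ``torus step'' to perform.

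Next I would form the maximal abelian variety quotient $A_{K}$ of $G_{K}$ (\cite[Section 9.2, Theorem 1]{BLR90}) and let $N_{K}$ be its kernel, which is unipotent. Let $N_{K}^{\natural} \subset N_{K}$ be the maximal smooth closed subgroup, so that $N_{K}^{\natural}$ is wound and $(N_{K}^{\natural})^{\RP} \isomto N_{K}^{\RP}$ by \cite[Proposition 8.4]{BS24}. Since $N_{K}^{\natural}$ is a closed subgroup of $G_{K}$, it contains no non-zero unirational subgroup, and by Proposition \ref{0019} its dual is therefore connected. Proposition \ref{0006} then provides $N_{K}^{\natural}$ with a quasi-compact N\'eron model, whence $N_{K}^{\RP}$ has a quasi-compact relatively perfect N\'eron model via Proposition \ref{0062}. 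The abelian variety $A_{K}$ has a quasi-compact N\'eron model by \cite[Section 1.4, Theorem 3]{BLR90}, so $A_{K}^{\RP}$ admits a quasi-compact relatively perfect N\'eron model as well.

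For the d\'evissage, write $I_{K} \subset A_{K}^{\RP}$ for the closed image of $G_{K}^{\RP} \to A_{K}^{\RP}$. By the quasi-compact version of Proposition \ref{0018} \eqref{0016}, $I_{K}$ admits a quasi-compact relatively perfect N\'eron model. The exact sequence
\[
    0 \to N_{K}^{\RP} \to G_{K}^{\RP} \to I_{K} \to 0
\]
combined with the quasi-compact version of Proposition \ref{0018} \eqref{0017} then equips $G_{K}^{\RP}$ with a quasi-compact relatively perfect N\'eron model, and Proposition \ref{0000} (in the quasi-compact form supplied by Proposition \ref{0018}) yields a quasi-compact N\'eron model for $G_{K}$ itself.

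The only non-routine point is to verify that the ``no non-zero unirational subgroup'' hypothesis passes from $G_{K}$ to $N_{K}^{\natural}$ so that Proposition \ref{0019} is applicable; this is immediate because a unirational subgroup of $N_{K}^{\natural}$ is a fortiori one of $G_{K}$. Beyond this small check, everything is a straightforward transcription of the proof of Theorem \ref{0009} with ``quasi-compact'' inserted in the appropriate places.
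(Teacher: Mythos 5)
Your proposal is correct and follows essentially the same route as the paper: the paper's proof of Theorem \ref{0013} simply reruns the d\'evissage of Theorem \ref{0009} noting that $T_K=0$, that Proposition \ref{0019} makes the dual of $N_K^{\RP}$ connected, and that Proposition \ref{0006} (rather than \ref{0005}) together with the quasi-compact half of Proposition \ref{0018} then yields the quasi-compact conclusion. Your write-up just makes explicit the steps the paper leaves implicit by the phrase ``keep the notation of the proof of Theorem \ref{0009}.''
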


\begin{proof}
	Keep the notation of the proof of Theorem \ref{0009}.
	Then under the extra assumptions of Theorem \ref{0013},
	we have $T_{K} = 0$,
	and the dual of $N_{K}^{\RP}$ is connected
	by Proposition \ref{0019},
	so that now we can apply Proposition \ref{0006} instead of \ref{0005}.
\end{proof}


\section{Non-existence of N\'eron models}
\label{0040}

Now the number $r$ (in $[K : K^{p}] = p^{r}$)
is arbitrary.
In this section,
we will prove Theorem \ref{0050} \eqref{0053}.

Set $K' = K^{1 / p}$.
Let $S'$ be the normalization of $S$ in $K'$.
Let $\Omega_{S}^{r} = \Omega_{S / \F_{p}}^{r}$ be
the sheaf of absolute differentials
(which is a line bundle over $S$).
View it as a vector group over $S$
(of relative dimension $1$).
Let
	\[
			C - W^{\ast}
		\colon
			\Res_{S' / S} \Omega_{S'}^{r}
		\onto
			\Omega_{S}^{r}
	\]
be the $\Order_{S}$-linear Cartier operator $C$
minus the formal $p$-th power $W^{\ast}$
(see \cite[Section 2]{AM76}
in the perfect residue field case
or the explicit construction below).
It is a smooth surjective morphism
of smooth group schemes of finite type.
Let $\nu(r)_{S}$ be the kernel of $C - W^{\ast}$.
All these constructions equally apply to $K$
in place of $S$.
In particular,
we have a smooth unipotent group $\nu(r)_{K}$ over $K$,
which is the generic fiber of $\nu(r)_{S}$.

\begin{theorem} \label{0032}
	Assume that $r > 1$ and $S$ has infinitely many points.
	Then $\nu(r)_{K}$ does not admit a N\'eron model over $S$.
\end{theorem}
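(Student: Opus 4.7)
The plan is to assume a global N\'eron model $N$ of $\nu(r)_{K}$ exists over $S$ and derive a contradiction by combining a computation of the wound quotient of local N\'eron models with a semi-continuity argument applied to the induced morphism from the smooth model $\nu(r)_{S}$.

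Since $\nu(r)_{K}$ is a wound unipotent group over $K$ (in particular, does not contain $\Ga$), by \cite[Section 10.2, Theorem 2]{BLR90} it admits a local N\'eron model $N^{\mathrm{loc}}_{s}$ over $\Order_{S,s}$ at each closed point $s \in S$, and any hypothetical global $N$ must satisfy $N \times_{S} \Spec \Order_{S,s} \cong N^{\mathrm{loc}}_{s}$ for every such $s$. The key computation is to exhibit a surjection $(N^{\mathrm{loc}}_{s})_{s} \onto \nu(r-1)_{k(s)}$ of $k(s)$-group schemes. Since $[k(s):k(s)^{p}] = p^{r-1}$ and $r > 1$, the target $\nu(r-1)_{k(s)}$ is a non-trivial wound unipotent group over the residue field. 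I would construct this surjection via a residue map: $\Order_{S,s}^{sh}$-valued sections of $N^{\mathrm{loc}}_{s}$ correspond by the N\'eron mapping property to $\dlog$-classes of top logarithmic differentials on the fraction field of $\Order_{S,s}^{sh}$, and taking residues along the uniformizer yields $(r-1)$-logarithmic differentials on $k(s)^{\sep}$, factoring through $(N^{\mathrm{loc}}_{s})_{s}$ and surjecting onto $\nu(r-1)_{k(s)}$.

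By the N\'eron mapping property applied to the smooth $S$-model $\nu(r)_{S}$, there is a unique $S$-morphism $\phi \colon \nu(r)_{S} \to N$ restricting to the identity on generic fibers. The special fiber $(\nu(r)_{S})_{s}$ is a product of copies of $\Ga$ (split unipotent) of dimension $d := \dim \nu(r)_{K}$, as the kernel of the morphism $C - W^{\ast}$ between vector groups over $S$ becomes $k(s)$-linear on the fiber. Consequently the composition $(\nu(r)_{S})_{s} \xrightarrow{\phi_{s}} (N^{\mathrm{loc}}_{s})_{s} \onto \nu(r-1)_{k(s)}$ vanishes, since there is no non-zero morphism from a split to a wound unipotent group. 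Setting $d' := \dim \nu(r-1)_{k(s)} > 0$ and using that $\dim (N^{\mathrm{loc}}_{s})_{s} = d$, this forces the image of $\phi_{s}$ to have dimension at most $d - d'$, hence $\dim \ker \phi_{s} \geq d' > 0$.

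The global contradiction comes from upper semi-continuity of fiber dimension applied to $\ker \phi \subset \nu(r)_{S}$, which is a closed subscheme of finite type over $S$. Its fiber at the generic point is zero-dimensional (since $\phi$ is an isomorphism on generic fibers), yet its fiber at every closed point $s$ has dimension $\geq d' > 0$. The locus where $\dim(\ker\phi)_{s} \geq 1$ is closed in $S$ by semi-continuity; under the hypotheses that $S$ is irreducible of dimension $\leq 1$ and has infinitely many closed points (hence is one-dimensional), any closed subset containing infinitely many closed points must equal $S$ --- including the generic point. This contradicts the zero-dimensionality of the generic fiber of $\ker \phi$. The main obstacle is the wound-quotient identification in the first step: explicitly realizing $\nu(r-1)_{k(s)}$ as a quotient of $(N^{\mathrm{loc}}_{s})_{s}$ requires a careful analysis of how $\Order_{S,s}^{sh}$-valued sections of the local N\'eron model specialize, interpreted via the logarithmic Hodge-Witt structure. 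Once that residue-type surjection is established, the dimension and semi-continuity arguments close the proof.
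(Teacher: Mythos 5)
Your overall architecture coincides with the paper's: compare the special fibers of the finite-type model $\nu(r)_{S}$ (which are split, isomorphic to $\Ga^{p^{r}-1}$) with those of the local N\'eron models (which have a non-trivial wound quotient $\nu(r-1)_{k(s)}$ when $r>1$), and use the infinitude of closed points to globalize the discrepancy. Your endgame differs mildly from the paper's --- you run a semi-continuity argument on $\ker\phi$ where the paper spreads out the morphism $\nu(r)_{S}\to G$ to an isomorphism over a dense open $U$ and then contradicts the non-isomorphy of fibers at a closed point of $U$ --- and that variant is fine (note only that Chevalley's theorem gives closedness of the jump locus in the total space of $\ker\phi$, and you should pull back along the identity section to get closedness in $S$; also your stated reason that the fiber of $C-W^{\ast}$ ``becomes $k(s)$-linear'' is not right --- the fiber of $\nu(r)_{S}$ is split because the reduction of the defining equation expresses one coordinate as an additive $p$-polynomial in the others, i.e.\ the fiber is a graph over $\Ga^{p^{r}-1}$).

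The genuine gap is the step you yourself flag as ``the main obstacle'': producing the surjection $(N^{\mathrm{loc}}_{s})_{s}\onto\nu(r-1)_{k(s)}$ \emph{as a morphism of $k(s)$-group schemes}. Your residue-map sketch operates on $\Order_{S,s}^{sh}$-valued sections, and a map on such points does not by itself yield an algebraic quotient of the special fiber; moreover the identification of sections of the local N\'eron model with logarithmic $r$-forms, and the descent of the residue to the special fiber, are exactly where the content lies. The paper resolves this by writing down an explicit candidate $\nu(r)^{\sim}_{\Order_{S,s}}$, the kernel of $C-W^{\ast}$ on differentials with log poles at $s$, proving by a direct valuation computation (Proposition \ref{0035}: any $K^{sh}_{s}$-solution of the defining $p$-polynomial equation is automatically integral, using that $t_{1},\dots,t_{r-1}$ lift a $p$-basis of $k(s)$) that this model satisfies the N\'eron mapping property, and then reading off from the equations that its special fiber is $\nu(r-1)_{s}\times\Ga^{p^{r-1}(p-1)}$ (Proposition \ref{0034}). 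Alternatively one could try to invoke purity for logarithmic Hodge--Witt sheaves, but that lives in the relatively perfect topology, and relative perfection does not commute with passage to special fibers, so transferring the resulting surjection back to the honest special fiber of $N^{\mathrm{loc}}_{s}$ would itself require an argument. Until the quotient map is actually constructed at the level of group schemes, the dimension count and the semi-continuity step have nothing to act on.
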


We will prove this below.
For the moment, we only assume that $S \ne \Spec K$
and do not make the assumptions of Theorem \ref{0032}.

We explicitly describe
$\nu(r)_{K}$ and $\nu(r)_{S}$.
Let $s \in S$ be a closed point.
Let $t_{1}, \dots, t_{r - 1} \in \Order_{S, s}$
be a lift of a $p$-basis of $k(s)$.
Let $\pi \in \Order_{S, s}$ be a prime element.
Then $\{t_{1}, \dots, t_{r - 1}, \pi\}$ form
a $p$-basis of $\Order_{S, s}$
and hence of $K$.
Therefore
	\begin{equation} \label{0027}
				\Omega_{K}^{r}
			\cong
					\Ga
					\dlog t_{1}
				\wedge
					\dots
				\wedge
					\dlog t_{r - 1}
				\wedge
					\dlog \pi,
	\end{equation}
	\begin{equation} \label{0028}
			\Res_{K' / K} \Omega_{K'}^{r}
		\cong
			\bigoplus_{0 \le i(1), \dots, i(r) \le p - 1}
					\Ga
					t_{1}^{i(1) / p}
					\cdots t_{r - 1}^{i(r - 1) / p}
					\pi^{i(r) / p}
					\dlog t_{1}^{1 / p}
				\wedge
					\dots
				\wedge
					\dlog t_{r - 1}^{1 / p}
				\wedge
					\dlog \pi^{1 / p}.
	\end{equation}
The morphism $C$ is linear
and the morphism $W^{\ast}$ is Frobenius-linear.
For each $0 \le i(1), \dots, i(r - 1) \le p - 1$, we have
	\begin{equation} \label{0029}
		\begin{aligned}
			&
					C \Bigl(
							t_{1}^{i(1) / p}
							\cdots t_{r - 1}^{i(r - 1) / p}
							\pi^{i(r) / p}
							\dlog t_{1}^{1 / p}
						\wedge
							\dots
						\wedge
							\dlog t_{r - 1}^{1 / p}
						\wedge
							 \dlog \pi^{1 / p}
					\Bigr)
			\\
			&	=
					\begin{cases}
								\dlog t_{1}
							\wedge
								\dots
							\wedge
								\dlog t_{r - 1}
							\wedge
								\dlog \pi
						&
							\text{if } i(1) = \dots = i(r) = 0,
						\\
							0
						&
							\text{else},
					\end{cases}
		\end{aligned}
	\end{equation}
	\begin{equation} \label{0030}
		\begin{aligned}
			&
					W^{\ast} \Bigl(
							t_{1}^{i(1) / p}
							\cdots t_{r - 1}^{i(r - 1) / p}
							\pi^{i(r) / p}
							\dlog t_{1}^{1 / p}
						\wedge
							\dots
						\wedge
							\dlog t_{r - 1}^{1 / p}
						\wedge
							\dlog \pi^{1 / p}
					\Bigr)
			\\
			&	=
						t_{1}^{i(1)}
						\cdots t_{r - 1}^{i(r - 1)}
						\pi^{i(r)}
						\dlog t_{1}
					\wedge
						\dots
					\wedge
						\dlog t_{r - 1}
					\wedge
						\dlog \pi.
		\end{aligned}
	\end{equation}
With these coordinate presentations,
the morphism $C - W^{\ast}$ over $K$ is given by
the morphism $\Ga^{p^{r}} \to \Ga$ defined by
	\begin{equation} \label{0031}
		\begin{aligned}
			&
					(x_{i(1) \cdots i(r)})
					_{0 \le i(1), \dots, i(r) \le p - 1}
			\\
			&	\mapsto
						x_{0 \cdots 0}
					-
						\sum_{0 \le i(1), \dots, i(r) \le p - 1}
							x_{i(1) \cdots i(r)}^{p}
							t_{1}^{i(1)}
							\cdots t_{r - 1}^{i(r - 1)}
							\pi^{i(r)}.
		\end{aligned}
	\end{equation}
Its kernel is $\nu(r)_{K}$.
From this presentation,
we can see that if $r > 0$,
then $\nu(r)_{K}$ is non-zero, connected
(by changing the base to the algebraic closure of $K$)
and wound (by \cite[Example B.2.4]{CGP15}).

Similarly, over $\Order_{S, s}$, we have
	\begin{gather*}
				\Omega_{S}^{r}
			\cong
					\Ga \dlog t_{1}
				\wedge
					\dots
				\wedge
					\dlog t_{r - 1}
				\wedge
					d \pi,
		\\
				\Res_{S' / S} \Omega_{S'}^{r}
			\cong
				\bigoplus_{0 \le i(1), \dots, i(r) \le p - 1}
						\Ga
						t_{1}^{i(1) / p}
						\cdots t_{r - 1}^{i(r - 1) / p}
						\pi^{i(r) / p}
						\dlog t_{1}^{1 / p}
					\wedge
						\dots
					\wedge
						\dlog t_{r - 1}^{1 / p}
					\wedge
						d \pi^{1 / p}.
	\end{gather*}
Note that the final terms are $d \pi$ and $d \pi^{1 / p}$,
not $\dlog \pi$ and $\dlog \pi^{1 / p}$.
The morphism $C - W^{\ast}$ over $\Order_{S, s}$ is given by
the morphism $\Ga^{p^{r}} \to \Ga$ defined by
	\begin{equation} \label{0033}
		\begin{aligned}
			&
					(y_{i(1) \cdots i(r)})
					_{0 \le i(1), \dots, i(r) \le p - 1}
			\\
			&	\mapsto
						y_{0 \cdots 0 \, p - 1}
					-
						\sum_{0 \le i(1), \dots, i(r) \le p - 1}
							y_{i(1) \cdots i(r)}^{p}
							t_{1}^{i(1)}
							\cdots t_{r - 1}^{i(r - 1)}
							\pi^{i(r)}.
		\end{aligned}
	\end{equation}
Its kernel is $\nu(r)_{S}$.

The change of coordinates
$(y_{i(1) \cdots i(r)}) \mapsto (x_{i(1) \cdots i(r)})$
for the source $\Ga^{p^r}$ is given by
	\begin{equation} \label{0076}
			x_{i(1) \cdots i(r)}
		=
			\begin{cases}
					y_{i(1) \cdots i(r - 1) \, p - 1} \pi
				&
					\text{if } i(r) = 0,
				\\
					y_{i(1) \cdots i(r - 1) \, i(r) - 1}
				&
					\text{else}.
			\end{cases}
	\end{equation}
The change of coordinates $y \mapsto x$
for the target $\Ga$ is given by $x = y \pi$.

Next we explicitly construct
local N\'eron models of $\nu(r)_{K}$.
Let $\Omega_{S}^{r}(\log s)$ be
the sheaf of absolute differentials
with log pole at $s$.
Let $s' \in S'$ be the point over $s$.
Let $\Omega_{S'}^{r}(\log s')$ be
the sheaf of absolute differentials
with log pole at $s'$.
Let
	\[
			C - W^{\ast}
		\colon
			\Res_{S' / S} \Omega_{S'}^{r}(\log s')
		\onto
			\Omega_{S}^{r}(\log s)
	\]
be the $\Order_{S}$-linear Cartier operator $C$
minus the formal $p$-th power $W^{\ast}$.
Define $\nu(r)_{\Order_{S, s}}^{\sim}$ to be
the kernel of this $C - W^{\ast}$.
These are described by the same formulas
\eqref{0027}--\eqref{0031} as for $\nu(r)_{K}$.
Hence $\nu(r)_{\Order_{S, s}}^{\sim}$ is
the kernel of the morphism \eqref{0031},
this time over $\Order_{S, s}$.

\begin{proposition} \label{0035}
	Let $s \in S$ be a closed point.
	Then the local N\'eron model of $\nu(r)_{K}$ at $s$ is
	given by $\nu(r)_{\Order_{S, s}}^{\sim}$.
\end{proposition}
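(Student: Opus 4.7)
The plan is to show that $N := \nu(r)_{\Order_{S, s}}^{\sim}$ is a smooth separated group scheme of finite type over $R := \Order_{S, s}$ with generic fiber $\nu(r)_{K}$ satisfying the N\'eron mapping property. Over $R$, formula \eqref{0031} still defines $N$ as the closed subscheme of $\Affine_{R}^{p^{r}}$ cut out by expressing $x_{0 \cdots 0}$ as a polynomial in the remaining $p^{r} - 1$ coordinates, giving an isomorphism $N \cong \Affine_{R}^{p^{r} - 1}$ of $R$-schemes. The group structure comes from $N$ being the kernel of an $R$-homomorphism of vector groups, and the closed immersion into $\Res_{S'/S} \Omega_{S'}^{r}(\log s')$ yields separatedness.

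For the N\'eron mapping property, I invoke the standard criterion \cite[Section~7.1, Proposition~6]{BLR90}: a smooth separated $R$-scheme of finite type $N$ is a N\'eron model of $N_{K}$ if and only if $N(R^{sh}) \to N_{K}(K^{sh})$ is bijective, where $R^{sh}$ is the strict henselization of $R$ at the closed point and $K^{sh}$ its fraction field. Injectivity is automatic from separatedness, so the task reduces to showing that any tuple $(x_{i})_{i \in \{0, \ldots, p - 1\}^{r}} \in (K^{sh})^{p^{r}}$ satisfying \eqref{0031} lies in $(R^{sh})^{p^{r}}$.

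I argue this by contradiction. Assume $D := -\min_{i} v(x_{i}) > 0$, where $v$ is the valuation on $K^{sh}$ normalized by $v(\pi) = 1$. For each $i$ with $x_{i} \neq 0$, write $x_{i} = u_{i} \pi^{v(x_{i})}$ with $u_{i} \in (R^{sh})^{\times}$, and let $\bar{u}_{i} \in k(s)^{\sep}$ denote its residue. Stratify the right-hand side of \eqref{0031} by the naive valuation $\nu(i) := p \cdot v(x_{i}) + i(r)$; since $\nu(i) \equiv i(r) \pmod{p}$ with $0 \le i(r) \le p - 1$, the index $i(r)$ is uniquely pinned down by $\nu(i)$. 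Setting $\nu_{\min} := \min_{i} \nu(i)$, a short case analysis shows $\nu_{\min} < v(x_{0 \cdots 0})$: if $v(x_{0 \cdots 0}) \ge 0$, any $i_{\ast}$ attaining $v(x_{i_{\ast}}) = -D$ gives $\nu(i_{\ast}) \le -p D + (p - 1) \le -1 < v(x_{0 \cdots 0})$, while if $v(x_{0 \cdots 0}) < 0$, the self-term $x_{0 \cdots 0}^{p}$ already satisfies $\nu(0 \cdots 0) = p \cdot v(x_{0 \cdots 0}) < v(x_{0 \cdots 0})$. Thus the level-$\nu_{\min}$ summands must cancel to strictly higher $\pi$-order, so dividing by $\pi^{\nu_{\min}}$ and reducing modulo $\pi$ yields
\[
        \sum_{i \colon \nu(i) = \nu_{\min}}
            \bar{u}_{i}^{p} \,
            \bar{t}_{1}^{i(1)} \cdots \bar{t}_{r - 1}^{i(r - 1)}
    =
        0
\]
in $k(s)^{\sep}$.

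Since $t_{1}, \ldots, t_{r - 1}$ form a $p$-basis of $k(s)$ and $k(s)^{\sep}/k(s)$ is separable, they remain a $p$-basis of $k(s)^{\sep}$, so the monomials $\bar{t}_{1}^{i(1)} \cdots \bar{t}_{r - 1}^{i(r - 1)}$ with $0 \le i(k) \le p - 1$ form a basis of $k(s)^{\sep}$ over $(k(s)^{\sep})^{p}$. Since the coefficients $\bar{u}_{i}^{p}$ in the above relation lie in $(k(s)^{\sep})^{p}$, linear independence forces $\bar{u}_{i} = 0$ for every $i$ in the sum, contradicting $u_{i} \in (R^{sh})^{\times}$. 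Hence $D \le 0$ and all $x_{i} \in R^{sh}$. The main technical obstacle is the residue-class-mod-$p$ observation: because $\nu(i) \bmod p$ singles out $i(r) \in \{0, \ldots, p - 1\}$, the multi-index cancellation at level $\nu_{\min}$ collapses to a one-dimensional linear independence statement, to which the $p$-basis property of $t_{1}, \ldots, t_{r - 1}$ can be applied.
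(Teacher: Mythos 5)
Your core argument is correct and is essentially the paper's own proof: both reduce the statement to showing that a $K_s^{sh}$-point of the kernel of \eqref{0031} is automatically an $\Order_{S,s}^{sh}$-point, and both conclude by the same two facts, namely that the residues of $t_1,\dots,t_{r-1}$ form a $p$-basis of $k(s)^{\sep}$ (so the $t$-monomials are independent over $p$-th powers) and that the exponent $i(r)$ of $\pi$ is pinned down modulo $p$; your stratification by $\nu(i)$ merely spells out the ``elementary calculations of discrete valuations'' that the paper leaves implicit. One slip in your setup paragraph: the sum in \eqref{0031} includes the index $i(1)=\dots=i(r)=0$, so the defining equation contains the self-term $x_{0\cdots 0}-x_{0\cdots 0}^{p}$ and you cannot express $x_{0\cdots 0}$ as a polynomial in the remaining coordinates; $\nu(r)^{\sim}_{\Order_{S,s}}$ is not the graph $\Affine_R^{p^r-1}$ you describe. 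Smoothness (and the finite-type, separated model structure) should instead be read off from the Jacobian criterion --- the partial derivative of the defining polynomial with respect to $x_{0\cdots 0}$ is $1$ --- or from the fact, noted in the paper, that $C-W^{\ast}$ is a smooth surjection of smooth group schemes; this does not affect the valuation argument, which goes through unchanged.
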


\begin{proof}
	We need to show that any tuple $(x_{i(1) \cdots i(r)})$
	of elements of $K_{s}^{sh}$
	that maps to zero by \eqref{0031}
	is actually a tuple of elements of $\Order_{S, s}^{sh}$.
	Let $v_{s}$ be the normalized valuation at $s$.
	
	We show that for any $0 \le i(r) \le p - 1$, we have an equality
		\begin{align*}
			&
					v_{s} \left(
						\sum_{0 \le i(1), \dots, i(r - 1) \le p - 1}
							x_{i(1) \cdots i(r)}^{p}
							t_{1}^{i(1)} \cdots t_{r - 1}^{i(r - 1)}
					\right)
			\\
			&	=
					p \min_{0 \le i(1), \dots, i(r - 1) \le p - 1}
						v_{s}(x_{i(1) \cdots i(r)}).
		\end{align*}
	The inequality $\ge$ is obvious.
	Dividing the $x_{i(1) \cdots i(r)}$ by an integer power of $\pi$,
	we may assume that the right-hand side is zero.
	Consider the element inside $v_{s}(\;\cdot\;)$ of the left-hand side
	and its reduction modulo $\pi$:
		\[
				\sum_{0 \le i(1), \dots, i(r - 1) \le p -1}
					\Bar{x}_{i(1) \cdots i(r)}^{p}
					\bar{t}_{1}^{i(1)} \cdots \bar{t}_{r - 1}^{i(r - 1)}
			\in
				k(s),
		\]
	where the bars denote reduction.
	This is non-zero
	since $\Bar{t}_{1}, \dots, \Bar{t}_{r - 1}$ are a $p$-basis of $k(s)$
	and some of the $\Bar{x}_{i(1) \cdots i(r)}$ are non-zero by assumption.
	Thus the left-hand side is also zero.
	
	From this equality, it follows
        by elementary calculations of
        discrete valuations
        that none of $(x_{i(1) \cdots i(r)})$
	can have negative valuation.
	Hence all of them are elements of $\Order_{S, s}^{sh}$.
\end{proof}

Now, over $\Order_{S, s}$, we have two models
$\nu(r)_{S} \times_{S} \Order_{S, s}$ and
$\nu(r)_{\Order_{S, s}}^{\sim}$ of $\nu(r)_{K}$,
which have different fibers:

\begin{proposition} \label{0034}
	Assume that $r > 1$.
	Let $s \in S$ be a closed point.
	Then the fiber of $\nu(r)_{\Order_{S, s}}^{\sim}$ at $s$
	and the fiber of $\nu(r)_{S}$ at $s$
	are both connected and not isomorphic.
\end{proposition}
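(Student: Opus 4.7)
The plan is to compute both fibers explicitly by reducing the defining equations modulo $\pi$ and then to distinguish them via the dimension of the maximal split unipotent subgroup. Reducing \eqref{0033} modulo $\pi$, all terms with $i(r) \ge 1$ in the sum vanish, so the resulting equation
\[
    y_{0 \cdots 0 \, p-1} = \sum_{0 \le i(1), \dots, i(r-1) \le p-1} y_{i(1) \cdots i(r-1) \, 0}^{p} \, t_{1}^{i(1)} \cdots t_{r-1}^{i(r-1)}
\]
expresses $y_{0 \cdots 0 \, p-1}$ as an additive $p$-polynomial in the other $y$'s, so $(\nu(r)_{S})_{s} \cong \Ga^{p^{r}-1}$ as a group scheme. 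Reducing \eqref{0031} modulo $\pi$ similarly kills the $i(r) \ge 1$ terms, but here $x_{0 \cdots 0}$ appears both linearly and (through the $(i(1), \dots, i(r-1)) = 0$ summand) to the $p$-th power, producing the Artin--Schreier--type equation
\[
    x_{0 \cdots 0} - x_{0 \cdots 0}^{p} = \sum_{(i(1), \dots, i(r-1)) \ne 0} x_{i(1) \cdots i(r-1) \, 0}^{p} \, t_{1}^{i(1)} \cdots t_{r-1}^{i(r-1)}.
\]
The $p^{r-1}(p-1)$ variables $x_{i(1) \cdots i(r)}$ with $i(r) \ge 1$ do not appear and split off as a direct factor, giving $(\nu(r)_{\Order_{S,s}}^{\sim})_{s} \cong V \times \Ga^{p^{r-1}(p-1)}$, where $V \subset \Ga^{p^{r-1}}$ is the closed subgroup cut out by the displayed equation.

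For connectedness, $\Ga^{p^{r}-1}$ is obvious. The group $V$ fits into a short exact sequence $0 \to \F_{p} \to V \to \Ga^{p^{r-1}-1} \to 0$, exhibiting it as an Artin--Schreier $\F_{p}$-torsor over affine space; it is connected precisely when this torsor is non-trivial, i.e., when the right-hand side $f$ of the defining equation is not of the form $g - g^{p}$ for a polynomial $g$ in the $x_{i(1) \cdots i(r-1) \, 0}$'s. Since $f$ has total degree $p$, any such $g$ must be affine linear, and matching the $x^{p}$-coefficients would express each monomial $t_{1}^{i(1)} \cdots t_{r-1}^{i(r-1)}$ with $(i(1), \dots, i(r-1)) \ne 0$ as a $p$-th power in $k(s)$, contradicting the fact that the full family of monomials for $0 \le i(j) \le p-1$ forms a $k(s)^{p}$-basis of $k(s)$.

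For non-isomorphism, it suffices to show that $V$ is wound of positive dimension, since then $V \times \Ga^{p^{r-1}(p-1)}$ has maximal split unipotent subgroup equal to the $\Ga^{p^{r-1}(p-1)}$ factor (any split subgroup must project trivially to the wound $V$), of dimension $p^{r} - p^{r-1}$, whereas $\Ga^{p^{r}-1}$ is itself split of dimension $p^{r} - 1 > p^{r} - p^{r-1}$. Positivity of $\dim V = p^{r-1} - 1$ is immediate from $r > 1$. Woundness follows from another $p$-basis argument: any $k(s)$-group homomorphism $\Ga \to V$ is given by $p$-polynomials $u_{\alpha}(t) = \sum_{i} a_{\alpha, i} t^{p^{i}}$, and substituting them into the defining equation of $V$ and reading off the top-degree coefficient in $t$ yields a relation $a_{0, N}^{p} + \sum_{\alpha \ne 0} a_{\alpha, N}^{p} c_{\alpha} = 0$ in $k(s)$ (where $c_{\alpha} = t_{1}^{i(1)} \cdots t_{r-1}^{i(r-1)}$); since $\{1\} \cup \{c_{\alpha} : \alpha \ne 0\}$ is $k(s)^{p}$-linearly independent, all $a_{\alpha, N}$ must vanish, contradicting the maximality of $N$. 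The main obstacle is thus the $p$-basis linear independence argument, which we invoke twice: once to establish connectedness of $V$ and once to establish its woundness.
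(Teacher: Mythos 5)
Your proof is correct and takes essentially the same route as the paper: reducing \eqref{0033} and \eqref{0031} modulo $\pi$ identifies the two fibers as $\Ga^{p^{r}-1}$ and $\nu(r-1)_{s}\times\Ga^{p^{r-1}(p-1)}$ (your $V$ is exactly $\nu(r-1)_{s}$), and the groups are distinguished by the presence of a non-zero connected wound factor. The only difference is that you verify the connectedness and woundness of $V$ by hand (Artin--Schreier non-triviality and the leading-coefficient $p$-basis argument) and make the non-isomorphism explicit via the dimension of the maximal split unipotent subgroup, whereas the paper delegates these points to its remark following \eqref{0031} and to \cite[Example B.2.4]{CGP15}.
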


\begin{proof}
	These fibers are
	$\nu(r - 1)_{s} \times \Ga^{p^{r - 1} (p - 1)}$
	and $\Ga^{p^{r} - 1}$, respectively,
	by \eqref{0031} and \eqref{0033}.
	The assumption $r > 1$ implies that
	$\nu(r - 1)_{s}$ is non-zero, connected and wound
	as we saw after \eqref{0031}.
\end{proof}

We are ready to prove the theorem.

\begin{proof}[Proof of Theorem \ref{0032}]
	Assume that $r > 1$ and $S$ has infinitely many points.
	Suppose for contradiction that
	$\nu(r)_{K}$ admits a N\'eron model over $S$, say $G$.
	By Propositions \ref{0035} and \ref{0034},
	$G$ has connected fibers and hence is quasi-compact.
	We have a natural morphism $\nu(r)_{S} \to G$ over $S$,
	which is an isomorphism over $K$.
	Since $\nu(r)_{S}$ and $G$ are both of finite type over $S$,
	it follows by spreading out that
	there exists a dense open subscheme $U$ of $S$
	such that $\nu(r)_{S} \to G$ is an isomorphism over $U$.
	Hence for any closed point $s$ of $U$,
	the localized morphism
		\[
				\nu(r)_{S} \times_{S} \Order_{S, s}
			\to
				G \times_{S} \Order_{S, s}
			=
				\nu(r)_{\Order_{S, s}}^{\sim}
		\]
	over $\Order_{S, s}$ is an isomorphism.
	But as $S$ is assumed to have
	infinitely many closed points,
	$U$ cannot be $\Spec K$.
	Hence this contradicts Proposition \ref{0034}.
\end{proof}

The only remaining statement to prove
in Theorem \ref{0050} \eqref{0053} is:

\begin{proposition} \label{0061}
	The group $\nu(r)_{K}$ is unirational.
\end{proposition}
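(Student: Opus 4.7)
The plan is to exhibit $\nu(r)_K$ as the scheme-theoretic image of a dominant $K$-morphism from a rational variety, namely the $\dlog$-symbol map. This will witness unirationality directly. Set $K' = K^{1/p}$ and consider the morphism of $K$-schemes
\[
\mu \colon (\Res_{K'/K} \Gm)^r \to \Res_{K'/K} \Omega^r_{K'}, \qquad (u_1,\dots,u_r) \mapsto \dlog u_1 \wedge \dots \wedge \dlog u_r.
\]
Its source is an open subscheme of $(\Res_{K'/K} \Ga)^r \cong \Affine^{rp^r}_K$, and hence is $K$-rational.

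First, I will verify that the image of $\mu$ lies inside $\nu(r)_K$. Writing $\mu(\vec u) = J(\vec u)\cdot \dlog \tau_1^{1/p} \wedge \dots \wedge \dlog \tau_r^{1/p}$ for the logarithmic Jacobian $J(\vec u) \in R \otimes_K K'$ (at an $R$-valued point) and decomposing $J(\vec u) = \sum_{(i)} a_{(i)} \tau^{(i)/p}$ with $a_{(i)} \in R$, the formulas \eqref{0029} and \eqref{0030} yield
\[
(C - W^*)(\mu(\vec u)) = (a_0 - J(\vec u)^p)\cdot \dlog \tau.
\]
The identity $a_0 = J(\vec u)^p$ is the coordinate expression of the classical fact that $\dlog$-symbols are sections of the logarithmic Hodge--Witt sheaf $W_1\Omega^{r,\mathrm{RP}}_{K,\log} = \nu(r)^{\mathrm{RP}}_K$ recalled in Section~\ref{0054}; note that $J(\vec u)^p \in R$ automatically since Frobenius carries $K'$ into $K$. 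This can be verified either by citing Kato's construction or by direct expansion of the Jacobian determinant.

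Second, I will show that $\mu$ is dominant onto $\nu(r)_K$. Since $\nu(r)_K$ is smooth and irreducible of dimension $p^r - 1$, it suffices to exhibit one point of $(\Res_{K'/K} \Gm)^r$ at which the tangent map of $\mu$ has rank $p^r - 1$. Evaluating at the point $(\tau_1^{1/p}, \ldots, \tau_r^{1/p})$, whose image under $\mu$ is the basis element $\dlog\tau_1^{1/p} \wedge \dots \wedge \dlog\tau_r^{1/p}$ (corresponding to $a_{(0)} = 1$ and $a_{(i)} = 0$ for $(i) \neq 0$), the differential $d\mu$ becomes an explicit $K$-linear map from $(\Res_{K'/K} \Ga)^r$ to the tangent space of $\nu(r)_K$, and surjectivity reduces to a finite linear-algebra check in the basis \eqref{0028}--\eqref{0031}.

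Once dominance is in hand, the scheme-theoretic image of $\mu$ is a closed subscheme of $\nu(r)_K$ of full dimension $p^r - 1$ and hence equals $\nu(r)_K$ by irreducibility; since the source $(\Res_{K'/K} \Gm)^r$ is rational, $\nu(r)_K$ is unirational, as desired. The main obstacle will be executing the dominance computation of the third step: the linear-algebra content is finite but the coordinate bookkeeping using the Cartier operator and the basis $\{\tau^{(i)/p}\}$ of $K'$ over $K$ requires care, and one needs to confirm that the Jacobian identity is non-degenerate at the chosen base point.
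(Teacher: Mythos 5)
Your argument is correct, but it takes a genuinely different route from the paper's. The paper covers $\nu(r)_{K}$ by a finite \'etale isogeny $\prod_{\underline{i} \in \Proj^{r-1}(\F_{p})} G_{\underline{i}} \to \nu(r)_{K}$, where each $G_{\underline{i}}$ is the $(p-1)$-dimensional subgroup cut out by the vanishing of all coordinates outside one $\F_{p}^{\times}$-orbit $I_{\underline{i}} \subset \F_{p}^{r}\setminus\{0\}$ and is identified with Oesterl\'e's unirational group $(\Res_{K(t^{1/p})/K}\Gm)/\Gm$; the only verification needed is that the sum map is an isomorphism on Lie algebras, which holds because the Lie algebras of the $G_{\underline{i}}$ decompose the hyperplane $x_{0\cdots 0}=0$. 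You instead parametrize $\nu(r)_{K}$ directly by the $r$-fold $\dlog$-symbol map, which is more intrinsic (it realizes $\nu(r)_{K}$ as the group of degree-$r$ logarithmic symbols, in line with Kato's definition) but costs two verifications the paper's route avoids: that the symbol map lands in $\ker(C - W^{*})$ at the finite level, and the rank computation. For the first, citing Kato is not quite sufficient, since his $\nu(r)^{\RP}$ is the relatively perfect sheaf while $\nu(r)_{K}$ here is a finite-level model; you should carry out the degree-$r$ analogue of \cite[Lemma (2.1)]{AM76} using $W^{*}(J\,\dlog t_{1}^{1/p}\wedge\cdots\wedge\dlog t_{r}^{1/p}) = J^{p}\,\dlog t_{1}\wedge\cdots\wedge\dlog t_{r}$ as you indicate. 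Both deferred steps do go through: at $(t_{1}^{1/p},\ldots,t_{r}^{1/p})$ one has $\dlog\bigl(t_{j}^{1/p}(1+\epsilon_{j})\bigr) = \dlog t_{j}^{1/p} + d\epsilon_{j} + O(\epsilon^{2})$, so writing $\epsilon_{j} = \sum_{(i)} c^{j}_{(i)} t^{(i)/p}$ the differential sends $(\epsilon_{1},\ldots,\epsilon_{r})$ to the element whose $x_{i(1)\cdots i(r)}$-coordinate is $\sum_{j=1}^{r} i(j)\, c^{j}_{(i)}$; this vanishes for $(i)=(0,\ldots,0)$ and can be made arbitrary otherwise, so the image of the differential is exactly the tangent space $\{x_{0\cdots 0}=0\}$ of $\nu(r)_{K}$, and dominance follows. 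What the paper's approach buys is a finite \'etale covering by known unirational groups with no differential-form computation; what yours buys is a single explicit dominant rational map from $\Affine^{r p^{r}}_{K}$ to $\nu(r)_{K}$ that visibly reflects the description of $\nu(r)$ by $\dlog$ symbols.
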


\begin{proof}
	Let $t_{1}, \dots, t_{r}$ be
	a $p$-basis of $K$.
	Let $\Proj^{r - 1}(\F_{p})$ be
	the set of $\F_{p}$-points of the projective $(r - 1)$-space.
	For an element
	$\underline{i} \in \Proj^{r - 1}(\F_{p})$,
	let $I_{\underline{i}}$ be
	the fiber of the natural quotient map
	$\F_{p}^{r} \setminus \{0\} \onto \Proj^{r - 1}(\F_{p})$
	over $\underline{i}$.
	Define a $K$-subgroup
	$G_{\underline{i}} \subset \nu(r)_{K}$
	by the additional equations
	$x_{i(1) \cdots i(r)} = 0$ in $\Ga^{p^{r}}$
	over
		$
				(i(1), \dots, i(r))
			\in
				\F_{p}^{r} \setminus I_{\underline{i}} \cup \{0\}
		$,
	where we identified the set $\{0, 1, \dots, p - 1\}$ with $\F_{p}$.
	Hence $G_{\underline{i}}$ is the $(p - 1)$-dimensional group
	defined by the single equation
		\[
					x_{0 \cdots 0}
				-
					x_{0 \cdots 0}^{p}
				-
					\sum_{(i(1), \dots, i(r)) \in I_{\underline{i}}}
						x_{i(1) \cdots i(r)}^{p}
						t_{1}^{i(1)} \cdots t_{r}^{i(r)}
			=
				0
		\]
	in the affine $p$-space.
	It is isomorphic to the subgroup of $\Ga^{p}$
	defined by the equation
		\[
					x_{0}
				-
					\sum_{i = 0}^{p - 1}
						x_{i}^{p} t^{i}
			=
				0,
			\quad \text{or} \quad
					y_{p - 1}
				-
					\sum_{i = 0}^{p - 1}
						y_{i}^{p} t^{i}
			=
				0,
		\]
	where $t = t_{1}^{i(1)} \cdots t_{r}^{i(r)}$
	with any choice of
	$(i(1), \dots, i(r)) \in I_{\underline{i}}$
	and $(x_{i})$ and $(y_{i})$ are related by a change of coordinates
	of the type \eqref{0076}.
	Therefore $G_{\underline{i}}$ is isomorphic to
	$(\Res_{K(t^{1 / p}) / K} \Gm) / \Gm$
	by \cite[Chapter VI, Proposition 5.3]{Oes84}
	and hence unirational.
	The Lie algebra of $G_{\underline{i}}$
	as a subspace of $\Ga^{p^{r}}$
	is defined by the equations
	$x_{i(1) \cdots i(r)} = 0$
	over
		$
				(i(1), \dots, i(r))
			\in
				\F_{p}^{r} \setminus I_{\underline{i}}
		$.
	Now consider the morphism
		\[
				\prod_{\underline{i} \in \Proj^{r - 1}(\F_{p})}
					G_{\underline{i}}
			\to
				\nu(r)_{K}
		\]
	defined by the sum of the inclusion maps
	$G_{\underline{i}} \into \nu(r)_{K}$
	over all $\underline{i} \in \Proj^{r - 1}(\F_{p})$.
	Since the Lie algebra of $\nu(r)_{K}$ is
	defined by the equation
	$x_{0 \cdots 0} = 0$,
	this morphism induces an isomorphism on the Lie algebras.
	Hence it is finite \'etale and, in particular, surjective.
	The unirationality of $\nu(r)_{K}$ then follows.
\end{proof}


\section{Groups invisible by relative perfection}
\label{0059}

We will prove Theorem \ref{0051} in the next section.
As preliminaries, in this section,
we study the kernel of the map $G^{(1 / p)} \onto G$
given in \eqref{0036}
for smooth connected unipotent groups $G$.

Assume $S = \Spec K$.
Set $K' = K^{1 / p}$.
Define $N_{0}$ to be the kernel of
the morphism $\Ga^{(1 / p)} \onto \Ga$ given in \eqref{0036}.
Hence we have an exact sequence
	\[
			0
		\to
			N_{0}
		\to
			\Ga^{(1 / p)}
		\to
			\Ga
		\to
			0.
	\]
The structure of $N_{0}$ is, in a sense, simple:

\begin{proposition} \label{0039}
	The endomorphism ring $\End_{K}(N_{0})$ is
	canonically isomorphic to $K'$.
\end{proposition}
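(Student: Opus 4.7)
First I would identify $\Ga^{(1/p)}$ with $\Res_{K'/K} \Ga$ via the $K$-algebra isomorphism $K^{(p)} \cong K'$, $a \mapsto a^{1/p}$ (the inverse of the Frobenius $K' \to K$, $a \mapsto a^p$). Under this identification, the morphism $\Ga^{(1/p)} \to \Ga$ from \eqref{0036} becomes the map that on $R$-points sends $y \in R \otimes_K K'$ to $y^p \in R$, which lies in $R = R \otimes_K K$ because $(K')^p = K$. Hence
\[
    N_0(R) = \{y \in R \otimes_K K' : y^p = 0 \in R\}.
\]
The $K'$-module structure on $R \otimes_K K'$ preserves $N_0$ since $(cy)^p = c^p y^p = 0$, yielding a ring homomorphism $K' \to \End_K(N_0)$, $c \mapsto (y \mapsto cy)$.

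Injectivity is immediate: for $c \ne 0$, multiplication by $c$ is an automorphism of $\Res_{K'/K} \Ga$ and hence cannot restrict to zero on the positive-dimensional subgroup $N_0$.

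The main obstacle is surjectivity, which I would address by a direct computation with the Hopf algebra $\mathcal{O}(N_0) = K[x_\alpha]/(\sum_\alpha x_\alpha^p t^\alpha)$, where $\{t^\alpha\}_{0 \le \alpha_i < p}$ arises from a $p$-basis $t_1, \ldots, t_r$ of $K$ and $\{t^{\alpha/p}\}$ is the corresponding $K$-basis of $K'$. An endomorphism $\phi$ is determined by primitive elements $\phi^*(y_\alpha) \in \mathcal{O}(N_0)$, where $y_\alpha$ are the coordinates of $\Ga^{(1/p)}$, subject to the constraint $\sum_\alpha \phi^*(y_\alpha)^p t^\alpha = 0$ in $\mathcal{O}(N_0)$, which expresses that $\phi$ factors through $N_0 \subset \Ga^{(1/p)}$. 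Writing each $\phi^*(y_\alpha)$ as an additive polynomial $\sum_{\beta, n \ge 0} c_{\alpha, \beta, n} x_\beta^{p^n}$ modulo the additive multiples of the defining relation (which have the form $\sum_n c_n r^{p^n}$), then substituting into the constraint and decomposing in the $K^p$-basis $\{t^\alpha\}$ of $K$, the Frobenius-semilinearity of the resulting equations together with the $K^p$-linear independence of the $t^\alpha$ forces all coefficients with $n \ge 1$ to vanish. The remaining linear terms ($n = 0$) precisely reproduce the matrix of multiplication by some $c \in K'$ relative to the basis $\{t^{\alpha/p}\}$, establishing that the natural map $K' \to \End_K(N_0)$ is a bijection of rings.
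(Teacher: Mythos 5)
Your overall strategy is the same as the paper's: identify $\Ga^{(1/p)}$ with $\Ga \tensor_K K'$, let $K'$ act by multiplication to get the ring map $K' \to \End_K(N_0)$, dispose of injectivity immediately, and prove surjectivity by an explicit computation with $\Order(N_0) = K[x_\alpha]/(\sum_\alpha x_\alpha^p t^\alpha)$, using the $K^p$-independence of the monomials $t^\alpha$ to kill higher Frobenius terms and then reading off that the surviving linear map is multiplication by an element of $K'$. The endgame of your computation (decomposing the constraint in the $K^p$-basis $\{t^\alpha\}$, absorbing the $n \ge 1$ contributions into additive multiples $\sum_m c_m \rho^{p^m}$ of the relation $\rho$, and matching the $n=0$ part with the matrix of $K'$-multiplication in the basis $\{t^{\alpha/p}\}$) is correct and is essentially the paper's.

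There is, however, one step you assert without justification, and it is precisely the step the paper spends its computation on: that each component $\phi^*(y_\alpha)$ --- a primitive element of the Hopf algebra $\Order(N_0)$ --- is represented by an additive polynomial $\sum_{\beta, n} c_{\alpha\beta n} x_\beta^{p^n}$ in the ambient coordinates. Equivalently, you are assuming that every homomorphism $N_0 \to \Ga$ extends to a homomorphism $\Ga^{(1/p)} \to \Ga$. This is not automatic: the obstruction to extending lives in $\Ext^1_K(\Ga, \Ga)$, which is nonzero in characteristic $p$ (Witt-vector extensions), so primitives of a quotient of $K[x_\alpha]$ by a Hopf ideal generated by an additive polynomial need not a priori lift to primitives upstairs. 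The paper closes exactly this gap by writing an arbitrary $K$-algebra endomorphism of $\Order(N_0)$ in the free-module basis $1, x_{0\cdots 0}, \dots, x_{0\cdots 0}^{p-1}$ over $K[x_\beta : \beta \ne 0]$, namely $x_j \mapsto \sum_{i=0}^{p-1} f_{j i}\, x_{0\cdots 0}^i$, and extracting from additivity that $f_{j 0}$ is an additive polynomial in the $x_{\beta \ne 0}$, $f_{j 1}$ is a constant, and $f_{j i} = 0$ for $i \ge 2$; only after that does the defining relation enter. You need to supply this (or an equivalent) argument before invoking your additive-polynomial normal form; once that is in place, the rest of your proof goes through.
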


This does not mean that
the only closed subgroups of $N_{0}$ are zero or $N_{0}$.

\begin{proof}
	First note that $\Ga^{(1 / p)}$
	(the Weil restriction of $\Ga$
	along the absolute Frobenius $\Spec K \to \Spec K$)
	can be identified with the Weil restriction of $\Ga$ over $K'$
	along the natural morphism $\Spec K' \to \Spec K$.
	The latter Weil restriction as a functor sends
	a $K$-algebra $A$ to $A \tensor_{K} K'$.
	This final functor is given by $\Ga \tensor_{K} K'$
	with the natural $K$-linear action on $\Ga$.
	Thus $\Ga^{(1 / p)} \cong \Ga \tensor_{K} K'$.
	Under this isomorphism,
	the morphism $\Ga^{(1 / p)} \onto \Ga$ is given by
	$x \tensor a' \mapsto x^{p} a'^{p}$
	(with $x \in \Ga$ and $a' \in K'$).
	
	We define a ring homomorphism
	$K' \to \End_{K}(N_{0})$.
	Let $a' \in K'$.
	Then we have a morphism
	$\Ga^{(1 / p)} \to \Ga^{(1 / p)}$ over $K$
	given by multiplication by $a'$.
	We also have a morphism $\Ga \to \Ga$
	given by multiplication by $a'^{p} \in K$.
	They are compatible with $\Ga^{(1 / p)} \onto \Ga$
	and so define a morphism $N_{0} \to N_{0}$.
	The multiplication in $K'$ corresponds to
	the composition in $\End_{K}(N_{0})$.
	Thus we have a ring homomorphism
	$K' \to \End_{K}(N_{0})$.
        It is injective since $K'$ is a field.
        
	Let $\varphi \in \End_{K}(N_{0})$.
	We want to show that it belongs to the subring $K'$.
        If there exists a $K$-linear morphism
        $\psi \colon \Ga^{(1 / p)} \to \Ga^{(1 / p)}$
        that makes the diagram
		\begin{equation} \label{0037}
			\begin{CD}
					0
				@>>>
					N_{0}
				@>>>
					\Ga^{(1 / p)}
				@>>>
					\Ga
				@>>>
					0
				\\ @. @V \varphi VV @V \psi VV \\
					0
				@>>>
					N_{0}
				@>>>
					\Ga^{(1 / p)}
				@>>>
					\Ga
				@>>>
					0
			\end{CD}
		\end{equation}
        commutative,
        then the induced morphism
        $\gamma \colon \Ga \to \Ga$
        that makes the diagram
		\[
			\begin{CD}
					0
				@>>>
					N_{0}
				@>>>
					\Ga^{(1 / p)}
				@>>>
					\Ga
				@>>>
					0
				\\ @. @V \varphi VV @V \psi VV @V \gamma VV \\
					0
				@>>>
					N_{0}
				@>>>
					\Ga^{(1 / p)}
				@>>>
					\Ga
				@>>>
					0
			\end{CD}
		\]
        commutative has to be $K$-linear by a degree reason,
	which implies that
	$\psi$ is the multiplication by an element of $K'$.
	Therefore it is enough to show
        the existence of such $\psi$.
	
	Let $t_{1}, \dots, t_{r}$ be a $p$-basis of $K$.
	The scheme $N_{0}$ is affine
	with affine ring $\Order(N_{0})$ given by
		\[
				\frac{
					K[
						x_{i(1) \cdots i(r)}
					\,|\,
						0 \le i(1), \dots, i(r) \le p - 1
					]
				}{
					(\sum_{i(1), \dots, i(r)}
						x_{i(1) \cdots i(r)}^{p}
						t_{1}^{i(1)} \cdots t_{r}^{i(r)}
					)
				}
		\]
	The morphism $\varphi \colon N_{0} \to N_{0}$ corresponds to
	a $K$-algebra homomorphism $\Order(N_{0}) \to \Order(N_{0})$,
	which can be written as
		\begin{equation} \label{0038}
				x_{j(1) \cdots j(r)}
			\mapsto
				\sum_{i = 0}^{p - 1}
					f_{j(1) \cdots j(r) i}
					x_{0 \cdots 0}^{i},
		\end{equation}
	where each $f_{j(1) \cdots j(r) i}$ is
	a polynomial in $x_{i(1) \cdots i(r)}$
	over $(i(1), \dots, i(r)) \ne (0, \dots, 0)$
	with coefficients in $K$.
	The additivity of $\varphi \colon N_{0} \to N_{0}$
	translates to the conditions that
	the polynomial  $f_{j(1) \cdots j(r) 0}$ gives
	a morphism $\Ga^{p^{r} - 1} \to \Ga^{p^{r} - 1}$,
	the polynomial $f_{j(1) \cdots j(r) 1}$ is constant
	and the polynomial $f_{j(1) \cdots j(r) i}$
	for $i \ge 2$ is zero.
        The assignment \eqref{0038} needs
        to map the polynomial
		$
			\sum_{i(1), \dots, i(r)}
				x_{i(1) \cdots i(r)}^{p}
				t_{1}^{i(1)} \cdots t_{r}^{i(r)}
		$
        to the zero element in $\Order(N_{0})$.
	This means an equality
		\begin{align*}
			&		\sum_{j(1), \dots, j(r)}
						f_{j(1) \cdots j(r) 0}^{p}
						t_{1}^{j(1)} \cdots t_{r}^{j(r)}
			\\
			&	=
					\sum_{\substack{
						i(1), j(1), \dots, i(r), j(r) \\
						(i(1), \dots, i(r)) \ne (0, \dots, 0)
					}}
						f_{j(1) \cdots j(r) 1}^{p}
						x_{i(1) \cdots i(r)}^{p}
						t_{1}^{i(1) + j(1)} \cdots t_{r}^{i(r) + j(r)}.
		\end{align*}
	Comparing both sides
        and using that $t_{1}, \dots, t_{r}$ form a $p$-basis of $K$,
	we know that the polynomial $f_{j(1) \cdots j(r) 0}$ is
	linear with no constant term.
	This means that \eqref{0038} is linear.
        Now define a $K$-linear morphism
        $\psi \colon \Ga^{(1 / p)} \to \Ga^{(1 / p)}$
        by the assignment \eqref{0038}.
        This $\psi$ makes the diagram \eqref{0037} commutative,
        as desired.
\end{proof}

With this, we can describe
the structure of the kernel of $G^{(1 / p)} \onto G$:

\begin{proposition} \label{0042}
	Let $G$ be a smooth connected unipotent group over $K$.
	Let $N$ be the kernel of the morphism
	$G^{(1 / p)} \onto G$.
	Then $N$ admits a finite filtration
	whose successive subquotients are all isomorphic to $N_{0}$.
	If $p G = 0$, then $N$ is isomorphic to $N_{0}^{\dim G}$.
\end{proposition}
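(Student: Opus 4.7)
The plan is a d\'evissage on $\dim G$, built on exactness of the functor $G \mapsto N_{G} := \ker(G^{(1/p)} \onto G)$.

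I would first establish that this functor is exact on short exact sequences of smooth connected unipotent $K$-groups. Indeed, $G \mapsto G^{(1/p)} = \Res_{K'/K} G_{K'}$ is left exact as a right adjoint to base change, and right exact since $R^{1} f_{\ast}$ vanishes for the finite affine morphism $f \colon \Spec K' \to \Spec K$ on fppf sheaves of smooth commutative groups. The transition morphism $G^{(1/p)} \onto G$ is an fppf surjection --- immediate for $G = \Ga$ from the formula $r \otimes a' \mapsto r^{p} a'^{p}$ in the proof of Proposition \ref{0039}, and then inherited by any quotient of a power of $\Ga$ via snake lemma. Another snake lemma argument applied to the vertical maps $G_{i}^{(1/p)} \onto G_{i}$ yields exactness of $G \mapsto N_{G}$.

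Next, any smooth connected unipotent $G$ admits a filtration $0 = G^{(0)} \subsetneq \cdots \subsetneq G^{(n)} = G$ by smooth closed connected subgroups with $\dim(G^{(i)}/G^{(i-1)}) = 1$. By exactness, $N_{G}$ inherits a filtration with successive subquotients $N_{H}$ for $H$ a $1$-dimensional smooth connected unipotent $K$-group. The first statement thus reduces to showing $N_{H} \cong N_{0}$ for every such $H$. This is tautological for $H \cong \Ga$. For $H$ a wound form of $\Ga$, I would embed $H$ inside $\Ga^{2}$ as the kernel of a surjective additive polynomial $\Ga^{2} \onto \Ga$, giving $0 \to H \to \Ga^{2} \to \Ga \to 0$; the exact functor $N$ then produces $0 \to N_{H} \to N_{0}^{2} \to N_{0} \to 0$. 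One concludes $N_{H} \cong N_{0}$ by a dimension count combined with the observation that $H_{K'} \cong \Ga_{K'}$, which furnishes an abstract isomorphism $H^{(1/p)} \cong \Ga^{(1/p)}$ realizing $N_{H}$ as a closed subgroup of $\Ga^{(1/p)}$ of the same dimension as $N_{0}$.

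For the second statement, assuming $p G = 0$, I would upgrade the filtration to a direct sum decomposition. The key is that any smooth connected unipotent $G$ with $pG = 0$ embeds as a closed subgroup of a split vector group $\Ga^{m}$, for which $N_{\Ga^{m}} \cong N_{0}^{m}$ visibly decomposes. Applying the exact functor $N$ to $0 \to G \to \Ga^{m} \to \Ga^{m}/G \to 0$ realizes $N_{G}$ as the kernel of an induced morphism $N_{0}^{m} \to N_{\Ga^{m}/G}$. Exploiting the isomorphism $K' \cong \End_{K}(N_{0})$ from Proposition \ref{0039} to endow $N_{G}$ with a $K'$-module structure of $K'$-dimension $\dim G$, one concludes $N_{G} \cong N_{0}^{\dim G}$.

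The main obstacle is the identification $N_{H} \cong N_{0}$ for wound $1$-dimensional $H$: although $H^{(1/p)} \cong \Ga^{(1/p)}$ as ambient group schemes, the transition morphisms $H^{(1/p)} \onto H$ and $\Ga^{(1/p)} \onto \Ga$ land in different targets, so recognizing that their kernels coincide requires a delicate analysis of the counit and relative Frobenius entering the construction \eqref{0036}, rather than a purely formal argument.
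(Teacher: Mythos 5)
Your overall strategy (exactness of $G \mapsto N_{G}$ plus d\'evissage) is reasonable, but the step you yourself flag as ``the main obstacle'' is a genuine gap, and the route you sketch for it does not work. First, the observation $H_{K'} \cong \Ga_{K'}$ is false in general for a wound one-dimensional $H$: a form of $\Ga$ such as $y^{p^{2}} = x + t x^{p}$ only trivializes over $K(t^{1/p^{2}})$, not over $K' = K^{1/p}$. Second, even when $H_{K'} \cong \Ga_{K'}$ does hold, the resulting abstract isomorphism $H^{(1/p)} \cong \Ga^{(1/p)}$ need not carry the transition morphism $H^{(1/p)} \onto H$ to $\Ga^{(1/p)} \onto \Ga$, so it identifies neither the kernels nor anything about them beyond their dimension; a dimension count alone cannot distinguish $N_{0}$ from a non-isomorphic closed subgroup of $N_{0}^{2}$ of the same dimension. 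A related gap affects your second part: $G^{(1/p)} = \Res_{K'/K} G_{K'}$ carries no evident $K'$-action for non-linear $G$, and $\Ga^{m}/G$ is not a vector group, so the asserted $K'$-module structure on $N_{G}$ of $K'$-dimension $\dim G$ is not justified as stated.

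The paper closes exactly this gap with the ingredient you invoke only in your second half, namely that $\End_{K}(N_{0}) \cong K'$ is a field (Proposition \ref{0039}), and it runs the d\'evissage in the opposite order. It first treats $pG = 0$ by choosing the presentation $0 \to G \to \Ga^{n} \to \Ga \to 0$ with $n = \dim G + 1$ from \cite[Section 10.2, Proposition 10]{BLR90}; applying $\Res_{K'/K}$ (exact here since $G$ is smooth) yields $0 \to N \to N_{0}^{n} \to N_{0} \to 0$, and since by additivity $\Hom_{K}(N_{0}^{n}, N_{0}) \cong (K')^{n}$ with $K'$ a field, the surjection $N_{0}^{n} \onto N_{0}$ corresponds to a nonzero vector and hence splits, giving $N \cong N_{0}^{n-1} = N_{0}^{\dim G}$ in one stroke. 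The general case then follows by filtering $G$ with smooth connected $p$-torsion subquotients (\cite[Section 10.2, Lemma 12]{BLR90}) and using exactness of $N$. Your one-dimensional case is subsumed: a form of $\Ga$ is killed by $p$, so $N_{H} \cong N_{0}^{1} = N_{0}$ by the split-presentation argument, with no analysis of the transition morphism or of $H_{K'}$ required. If you want to salvage your write-up, replace the $H_{K'}$ argument by this splitting argument and derive the second statement directly from the presentation $0 \to G \to \Ga^{n} \to \Ga \to 0$ rather than from an embedding $G \into \Ga^{m}$.
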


\begin{proof}
	First assume that $p G = 0$.
	Then there exists an exact sequence
	$0 \to G \to \Ga^{n} \to \Ga \to 0$ with $n = \dim G + 1$
	by \cite[Section 10.2, Proposition 10]{BLR90}.
	Since $G$ is smooth, this sequence remains exact
	after applying the Weil restriction functor
	$(1 / p) = \Res_{K' / K}$.
	Therefore we have an exact sequence
		\[
				0
			\to
				N
			\to
				N_{0}^{n}
			\to
				N_{0}
			\to
				0.
		\]
	Since $\End_{K}(N_{0}) \cong K'$ is a field
	by Proposition \ref{0039},
	the last morphism $N_{0}^{n} \onto N_{0}$ admits a splitting.
	Hence $N \cong N_{0}^{n - 1}$.
	The case of general $G$ follows from this
	by \cite[Section 10.2, Lemma 12]{BLR90}.
\end{proof}


\section{Unirational wound unipotent groups}

Assume $S = \Spec K$ and $r = 1$.
For $i \ge 0$,
let $\Ext_{K}^{i}$ be the $i$-th Ext functors for the fppf site of $K$.
Note that when $G$ and $H$ are smooth group schemes over $K$,
the group $\Ext_{K}^{i}(G, H)$ does not change
if we instead use the big \'etale site of $K$.
The groups $N_{0}$ and $N$ in the previous section
do not contribute to derived $\Gm$-duals in low degrees:

\begin{proposition} \label{0043}
	We have
	$\Hom_{K}(N_{0}, \Gm) = \Ext^{1}_{K}(N_{0}, \Gm) = 0$.
\end{proposition}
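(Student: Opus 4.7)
The plan is to apply $\Hom_{K}(-,\Gm)$ to the defining short exact sequence
$0 \to N_{0} \to \Ga^{(1/p)} \to \Ga \to 0$
and read both vanishings off the resulting long exact sequence of $\Ext$-groups. Choose a $p$-basis $t$ of $K$ and the induced $K$-basis $\{1, t^{1/p}, \dots, t^{(p-1)/p}\}$ of $K' = K^{1/p}$, which gives a (non-canonical) isomorphism $\Ga^{(1/p)} \cong \Ga^{p}$ of $K$-group schemes and hence $\Ext^{i}_{K}(\Ga^{(1/p)}, \Gm) \cong \Ext^{i}_{K}(\Ga, \Gm)^{p}$ for every $i$. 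The classical vanishings $\Hom_{K}(\Ga, \Gm) = \Ext^{1}_{K}(\Ga, \Gm) = 0$ then kill the first four terms of the long exact sequence, so $\Hom_{K}(N_{0}, \Gm) = 0$ follows immediately. Combined with Rosengarten's identification $\Ext^{2}_{K}(\Ga, \Gm) \cong \Omega^{1}_{K}$, the group $\Ext^{1}_{K}(N_{0}, \Gm)$ is identified with the kernel of the map
$$
	\phi \colon \Omega^{1}_{K} \to (\Omega^{1}_{K})^{p}
$$
induced by the surjection $\Ga^{(1/p)} \onto \Ga$.

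Next, I would make $\phi$ explicit. In the chosen coordinates the surjection reads $(x_{0}, \dots, x_{p-1}) \mapsto \sum_{i = 0}^{p-1} t^{i} x_{i}^{p}$, which factors as the componentwise Frobenius $F^{\times p}$ on $\Ga^{p}$ followed by the $K$-linear map $L \colon (y_{0}, \dots, y_{p-1}) \mapsto \sum_{i = 0}^{p-1} t^{i} y_{i}$. Pullback by $L$ sends a form $\omega$ to $(\omega, t\omega, \dots, t^{p-1}\omega)$, and pullback by $F^{\times p}$ then applies the Frobenius pullback $F^{\ast}$ on $\Omega^{1}_{K}$ in each coordinate, giving
$$
	\phi(\omega) = \bigl( F^{\ast}\omega,\; F^{\ast}(t\omega),\; \dots,\; F^{\ast}(t^{p-1}\omega) \bigr).
$$

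The main obstacle will be to show that this $\phi$ is injective. The crucial inputs are that $F^{\ast}$ is $p$-semilinear, $F^{\ast}(a^{p} \xi) = a \cdot F^{\ast}\xi$ for $a \in K$ (coming from $F \circ m_{a} = m_{a^{p}} \circ F$), and that under Rosengarten's identification it corresponds to the Cartier operator on $\Omega^{1}_{K}$, whose kernel is the exact forms $d K \subset \Omega^{1}_{K}$. For a fixed generator $\omega_{0}$ of $\Omega^{1}_{K}$, this forces $F^{\ast}(t^{p-1}\omega_{0}) = \omega_{0}$ while $F^{\ast}(t^{k}\omega_{0}) = 0$ for $k \in \{0, 1, \dots, p-2\}$, since $t^{k}\omega_{0} = d(t^{k+1}/(k+1))$ is exact in this range. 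Writing an arbitrary $\omega = f\omega_{0}$ with $f = \sum_{j = 0}^{p-1} c_{j}^{p} t^{j}$ in the $K^{p}$-basis $\{1, t, \dots, t^{p-1}\}$ of $K$ and expanding, the $i$-th component $\phi(\omega)_{i} = F^{\ast}(t^{i} f \omega_{0})$ picks out the unique exponent $i + j = p - 1$, simplifying to $c_{p-1-i}\, \omega_{0}$. Hence $\phi(\omega) = 0$ forces all $c_{j} = 0$ and thus $\omega = 0$, giving the desired injectivity. The part requiring the most care will be the clean identification of $F^{\ast}$ with the Cartier operator inside Rosengarten's framework.
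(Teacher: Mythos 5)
Your proposal is correct and follows essentially the same route as the paper: the long exact sequence for $0 \to N_{0} \to \Ga^{(1/p)} \to \Ga \to 0$, the vanishing of $\Hom$ and $\Ext^{1}$ from $\Ga$ to $\Gm$, Rosengarten's isomorphism $\Ext^{2}_{K}(\Ga, \Gm) \cong \Omega^{1}_{K}$, and the identification of the restriction map with $\omega \mapsto (C_{0}(t^{i}\omega))_{i}$ via the Cartier operator. The only difference is that you carry out the injectivity computation (which the paper leaves as ``an explicit calculation''), and your computation is correct provided one fixes $\omega_{0} = dt$ rather than an arbitrary generator.
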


\begin{proof}
	We have $\Hom_{K}(\Ga, \Gm) = \Ext^{1}_{K}(\Ga, \Gm) = 0$.
	Hence $\Hom_{K}(N_{0}, \Gm) = 0$ and we have an exact sequence
		\begin{equation} \label{0041}
				0
			\to
				\Ext^{1}_{K}(N_{0}, \Gm)
			\to
				\Ext^{2}_{K}(\Ga, \Gm)
			\to
				\Ext^{2}_{K}(\Ga^{(1 / p)}, \Gm).
		\end{equation}
	We have an exact sequence
		\[
				0
			\to
				\Gm
			\stackrel{\incl}{\to}
				\Res_{K' / K}
				\Gm
			\stackrel{\dlog}{\to}
				\Res_{K' / K}
				\Omega_{K'}^{1}
			\stackrel{C - W^{\ast}}{\to}
				\Omega_{K}^{1}
			\to
				0
		\]
	of smooth group schemes over $K$
	as in Section \ref{0040}
	(namely by \cite[Lemma (2.1)]{AM76}).
	This sequence defines a group homomorphism
		\[
				\Omega^{1}_{K}
			\to
				\Ext^{2}_{K}(\Ga, \Gm),
		\]
	which is an isomorphism by \cite[Proposition 2.7.6]{Ros23}
	(which requires $r = 1$).
	This isomorphism is $K$-linear with the natural $K$-actions
	on $\Omega^{1}_{K}$ and $\Ga$.
	Let $C_{0} \colon \Omega^{1}_{K} \to \Omega^{1}_{K}$ be
	the inverse-Frobenius-linear Cartier operator.
	Then the endomorphism
	$C_{0}$ on $\Omega^{1}_{K}$ corresponds to
	the Frobenius endomorphism $F$ on $\Ga$.
	From these, we can translate the final morphism
	in \eqref{0041} as the morphism
		\begin{gather*}
					\Omega^{1}_{K}
				\to
					\Hom_{\text{$K$-linear}}(K', \Omega^{1}_{K});
			\\
					\omega
				\mapsto
					(a' \mapsto C_{0}(a'^{p} \omega)).
		\end{gather*}
	This map is injective by an explicit calculation,
	hence $\Ext^{1}_{K}(N_{0}, \Gm) = 0$.
\end{proof}

\begin{proposition} \label{0044}
	Let $G$ be a smooth connected unipotent group over $K$.
	Let $N$ be the kernel of the morphism $G^{(1 / p)} \onto G$.
	Then $\Hom_{K}(N, \Gm) = \Ext^{1}_{K}(N, \Gm) = 0$.
\end{proposition}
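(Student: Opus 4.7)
The plan is a straightforward dévissage using the two results already in hand: Proposition \ref{0042}, which provides a finite filtration
\[
    0 = N^{(0)} \subset N^{(1)} \subset \cdots \subset N^{(k)} = N
\]
with successive quotients $N^{(i)}/N^{(i-1)} \cong N_{0}$, and Proposition \ref{0043}, which gives the vanishing of $\Hom_{K}(N_{0}, \Gm)$ and $\Ext^{1}_{K}(N_{0}, \Gm)$.

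I would prove by induction on $i$ that $\Hom_{K}(N^{(i)}, \Gm) = \Ext^{1}_{K}(N^{(i)}, \Gm) = 0$. The base case $i=0$ is trivial. For the inductive step, apply $\Hom_{K}(-, \Gm)$ to the short exact sequence
\[
    0 \to N^{(i-1)} \to N^{(i)} \to N_{0} \to 0
\]
to obtain the six-term exact sequence
\[
    0 \to \Hom_{K}(N_{0}, \Gm) \to \Hom_{K}(N^{(i)}, \Gm) \to \Hom_{K}(N^{(i-1)}, \Gm) \to \Ext^{1}_{K}(N_{0}, \Gm) \to \Ext^{1}_{K}(N^{(i)}, \Gm) \to \Ext^{1}_{K}(N^{(i-1)}, \Gm).
\]
By Proposition \ref{0043}, the two terms involving $N_{0}$ vanish, and by the inductive hypothesis so do the two terms involving $N^{(i-1)}$. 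Exactness then forces $\Hom_{K}(N^{(i)}, \Gm) = \Ext^{1}_{K}(N^{(i)}, \Gm) = 0$, completing the induction. Setting $i = k$ gives the proposition.

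There is no real obstacle here: the two previous propositions were designed to make this dévissage work. The only minor point to mention is that the Ext groups in question may be computed in either the fppf or the big \'etale site (as noted just before Proposition \ref{0043}), so one is free to use the long exact sequence functorially attached to the short exact sequence of smooth group schemes. In the special case $p G = 0$, Proposition \ref{0042} actually gives $N \cong N_{0}^{\dim G}$, and the conclusion is immediate from Proposition \ref{0043} by additivity of $\Hom$ and $\Ext^{1}$, but the general filtered case requires the above long exact sequence argument.
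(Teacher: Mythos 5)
Your proof is correct and is exactly the dévissage the paper intends: the paper's own proof of this proposition is the one-line "This follows from Propositions \ref{0042} and \ref{0043}," and your induction along the filtration via the six-term exact sequence is precisely the omitted argument.
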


\begin{proof}
	This follows from Propositions \ref{0042} and \ref{0043}.
\end{proof}

From these, we can show that passing to the relative perfection
does not lose information about extension groups by $\Gm$:

\begin{proposition} \label{0045}
	Let $G$ be a smooth connected unipotent group over $K$.
	Then
		$
				\Ext^{1}_{K}(G, \Gm)
			\isomto
				\Ext^{1}_{K_{\RP}}(G^{\RP}, \Gm^{\RP})
		$.
\end{proposition}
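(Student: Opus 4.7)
The natural map sends an fppf extension $0 \to \Gm \to E \to G \to 0$ to its restriction $0 \to \Gm^{\RP} \to E^{\RP} \to G^{\RP} \to 0$ in $\Ab(K_{\RP})$ (relative perfection preserving short exactness for smooth groups, since kernels and images of morphisms of smooth groups are again smooth). My plan is to use the short exact sequence $0 \to N \to G^{(1/p)} \to G \to 0$ of Proposition \ref{0042} as the main engine. Applying $\Hom_K(-, \Gm)$ and using the vanishings $\Hom_K(N, \Gm) = \Ext^1_K(N, \Gm) = 0$ of Proposition \ref{0044}, the resulting long exact sequence produces an isomorphism $\Ext^1_K(G, \Gm) \isomto \Ext^1_K(G^{(1/p)}, \Gm)$, which iterates to $\Ext^1_K(G, \Gm) \isomto \Ext^1_K(G^{(1/p^n)}, \Gm)$ for every $n \ge 0$.

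For injectivity of the comparison map: a splitting $s \colon G^{\RP} \to E^{\RP}$ in $\Ab(K_{\RP})$ composed with the counit $E^{\RP} \to E$ yields a $K$-scheme morphism $G^{\RP} \to E$. Since $E$ is of finite type while $G^{\RP} = \invlim_n G^{(1/p^n)}$ has affine transition maps, we have $\Hom_K(G^{\RP}, E) = \dirlim_n \Hom_K(G^{(1/p^n)}, E)$, so this morphism factors through some $t \colon G^{(1/p^n)} \to E$. The composition $G^{\RP} \to G^{(1/p^n)} \xrightarrow{t} E \to G$ agrees with the natural map $G^{\RP} \to G$ (via naturality of the counit and the splitting property of $s$), and after possibly enlarging $n$ and invoking faithful flatness of the transitions $G^{(1/p^{n+k})} \to G^{(1/p^n)}$, one concludes that $(E \to G) \circ t$ equals the natural projection $G^{(1/p^n)} \to G$; thus $t$ splits the pullback extension along $G^{(1/p^n)} \to G$, and the finite-level isomorphism forces the original extension to be trivial.

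For surjectivity: given an extension $0 \to \Gm^{\RP} \to E \to G^{\RP} \to 0$ in $\Ab(K_{\RP})$, \cite[Proposition 8.12]{BS24} (which says that relative perfections of quasi-compact smooth group schemes form an abelian subcategory of $\Ab(K_{\RP})$ closed under extensions) realizes it as the relative perfection of a smooth group extension $0 \to \Gm^{(1/p^a)} \to E_0 \to G^{(1/p^b)} \to 0$ for suitable $a, b \ge 0$. Applying the finite-level isomorphism of the first paragraph to both the $G$- and $\Gm$-variables (the required vanishings $\Hom_K(N, \Gm^{(1/p^a)}) = \Ext^1_K(N, \Gm^{(1/p^a)}) = 0$ follow from Frobenius-twisted analogs of Proposition \ref{0044}), one descends to an fppf extension of $G$ by $\Gm^{(1/p^a)}$ whose relative perfection is $E$; a final reduction of the kernel from $\Gm^{(1/p^a)}$ to $\Gm$ uses the explicit description of morphism groups between relative perfections encoded in \cite[Proposition 8.12]{BS24} to choose a suitable smooth representative.

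The main obstacle will be this last step of the surjectivity argument: while descending the quotient from $G^{(1/p^b)}$ to $G$ is supplied directly by the isomorphism from the first paragraph, rectifying the kernel from $\Gm^{(1/p^a)}$ to $\Gm$ at the smooth level requires matching the inclusion $\Gm^{\RP} \into E^{\RP}$ against an fppf morphism $\Gm \into E_0$ for some suitable smooth representative $E_0$, which is a delicate bookkeeping task about Frobenius levels rather than a direct consequence of a long exact sequence.
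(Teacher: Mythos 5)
You have correctly isolated the engine of the argument --- Proposition \ref{0044} applied to the kernel of $G^{(1/p^{n+1})} \to G^{(1/p^n)}$, giving $\Ext^1_K(G,\Gm) \isomto \Ext^1_K(G^{(1/p^n)},\Gm)$ for all $n$ --- and your injectivity argument (factor a splitting $G^{\RP} \to E^{\RP} \to E$ through a finite level using that $E$ is of finite presentation and the transition maps of $\{G^{(1/p^n)}\}$ are affine) is essentially a by-hand verification of one half of the limit formula. But the surjectivity half has a genuine gap, which you yourself flag. The problem is not mere bookkeeping. Your plan is to realize an extension $0 \to \Gm^{\RP} \to E \to G^{\RP} \to 0$ in $\Ab(K_{\RP})$ as the relative perfection of a smooth extension $0 \to \Gm^{(1/p^a)} \to E_0 \to G^{(1/p^b)} \to 0$ and then ``rectify'' the kernel down to $\Gm$. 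First, \cite[Proposition 8.12]{BS24} only tells you that the middle term $E$ is the relative perfection of some quasi-compact smooth group; it does not hand you a finite-level presentation of the whole short exact sequence with prescribed kernel and quotient. Second, and more seriously, replacing $\Gm^{(1/p^a)}$ by $\Gm$ changes the Ext group: by exactness of the finite pushforward one has $\Ext^1_K(G', \Gm^{(1/p^a)}) \cong \Ext^1_{K^{1/p^a}}(G'_{K^{1/p^a}}, \Gm)$, and the cokernel of $\Gm \into \Gm^{(1/p^a)}$ is a wound unirational unipotent group to which unipotent groups admit plenty of nonzero homomorphisms (this $\Hom$ is exactly what the duality of Section 2 measures), so the comparison map $\Ext^1_K(G',\Gm) \to \Ext^1_K(G',\Gm^{(1/p^a)})$ is not obviously surjective. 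So the step you defer is where the proof actually lives, and it is not supplied.

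The paper sidesteps all of this: it never varies the coefficient at all. Since $\Gm^{\RP}$ restricted to relatively perfect test schemes has the same points as $\Gm$, one has the general comparison
\[
\Ext^1_{K_{\RP}}(G^{\RP}, \Gm^{\RP}) \cong \Ext^1_K(G^{\RP}, \Gm) \cong \dirlim_n \Ext^1_K(G^{(1/p^n)}, \Gm),
\]
where the first isomorphism is the comparison of the relatively perfect site with the ambient site and the second is continuity of $\Ext$ along the limit $G^{\RP} = \invlim_n G^{(1/p^n)}$ (both established in \cite{BS24} by a Breen--Deligne-type resolution argument, as in the proof of Proposition \ref{0068}). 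Only the source variable moves through the Frobenius tower, and Proposition \ref{0044} then shows every transition map in the direct system is an isomorphism. If you want to keep your hands-on strategy, you must either prove the $\Ext$-continuity statement directly or find a different way to descend an arbitrary extension in $\Ab(K_{\RP})$ to one of $G$ by $\Gm$ over $K$; as written, the argument is incomplete.
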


\begin{proof}
	We have
		\[
				\Ext^{1}_{K_{\RP}}(G^{\RP}, \Gm^{\RP})
			\cong
				\Ext^{1}_{K}(G^{\RP}, \Gm)
			\cong
				\dirlim_{n}
					\Ext^{1}_{K}(G^{(1 / p^{n})}, \Gm).
		\]
	Hence the result follows from Proposition \ref{0044}.
\end{proof}

Now we can apply Achet's result
on finiteness of $\Ext^{1}_{K}(G, \Gm)$
for unirational $G$ to prove that
unirational groups are dual to \'etale groups:

\begin{proposition} \label{0007}
	Let $G_{K}$ be a wound unipotent group over $K$ with dual $H_{K}$.
	Then $G_{K}$ is unirational if and only if $H_{K}$ is \'etale.
	The group $\pi_{0}(H_{K})$ is dual to
        the maximal unirational subgroup of $G_{K}$.
	An extension of a wound unipotent unirational group
	by a wound unipotent unirational group is unirational.
\end{proposition}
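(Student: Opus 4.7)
The plan is to identify the dual $H_K$ with the functor $L \mapsto \Ext^1_L(G_L, \Gm)$ on field extensions of $K$, and then deduce all three assertions from Achet's finiteness theorem \cite[Theorem 2.8]{Ach19b} combined with the duality theory of relatively perfect wound unipotent groups. The main obstacle is the $\Ext$-comparison $\Ext^1_L(G_L, \Gm) \cong \Ext^1_{L_{\RP}}(G_L^{\RP}, \Gm^{\RP})$, which lets one pass between the relatively perfect world (where the duality lives) and the original smooth-group world (where Achet's theorem lives); this is exactly Proposition \ref{0045}, whose proof occupies Section \ref{0059}.

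First I will establish the identification $H_K(L) \cong \Ext^1_L(G_L, \Gm)$ for every field extension $L/K$. Note that $H_K(L) = \Hom_{L_{\RP}}(G_L^{\RP}, \nu_{\infty}(1)_L^{\RP})$. Applying $\Hom_{L_{\RP}}(G_L^{\RP}, -)$ to \eqref{0015} over $L$ and using $\Hom_{L_{\RP}}(G_L^{\RP}, \Gm^{\RP}) = 0$ (no characters from a unipotent group, even after Frobenius twists) yields $\Hom_{L_{\RP}}(G_L^{\RP}, \nu_n(1)_L^{\RP}) \cong \Ext^1_{L_{\RP}}(G_L^{\RP}, \Gm^{\RP})[p^n]$; passing to the limit in $n$, and using that $G_L^{\RP}$ is $p$-power torsion, gives $H_K(L) \cong \Ext^1_{L_{\RP}}(G_L^{\RP}, \Gm^{\RP})$, which by Proposition \ref{0045} equals $\Ext^1_L(G_L, \Gm)$.

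Statement (1) now follows from Achet's theorem, according to which $\Ext^1_L(G_L, \Gm)$ is finite for a wound unipotent $G_L$ if and only if $G_L$ is unirational. If $G_K$ is unirational then so is every base change $G_L$, so $H_K(L)$ is finite for every $L/K$; a relatively perfectly smooth $K$-group having finite $K^{\sep}$-points must be étale (otherwise its underlying smooth representative has a positive-dimensional identity component with infinitely many $K^{\sep}$-points), so $H_K$ is étale. Conversely, if $H_K$ is étale it is finite étale by quasi-compactness, so $\Ext^1_K(G_K, \Gm) = H_K(K)$ is finite and Achet yields that $G_K$ is unirational. For statement (3), I will dualize a short exact sequence $0 \to G' \to G \to G'' \to 0$ of wound unipotent unirational groups via the exactness of duality \cite[Proposition 3.4]{Suz24}: by (1) both $(G')^{\ast}$ and $(G'')^{\ast}$ are finite étale, so their extension $G^{\ast}$ is finite étale as well, and (1) applied again shows that $G$ is unirational.

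Finally for statement (2), let $G_K^{\mathrm{ur}} \subseteq G_K$ be the maximal unirational closed subgroup; it exists because $G_K$ is finite-dimensional and any finite sum of unirational closed subgroups is the image of their product, hence itself unirational. By (3) and maximality, the quotient $G_K/G_K^{\mathrm{ur}}$ contains no non-trivial unirational subgroup, so by Proposition \ref{0019} its dual is connected. Dualizing the extension $0 \to G_K^{\mathrm{ur}} \to G_K \to G_K/G_K^{\mathrm{ur}} \to 0$ then exhibits $H_K$ as an extension of the étale group $(G_K^{\mathrm{ur}})^{\ast}$ by the connected group $(G_K/G_K^{\mathrm{ur}})^{\ast}$, so the uniqueness of the connected-étale sequence for $H_K$ identifies $\pi_0(H_K) \cong (G_K^{\mathrm{ur}})^{\ast}$, as required.
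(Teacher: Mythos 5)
The forward direction and statement (3) of your proposal match the paper's argument: identify $\Ext^1_K(G_K,\Gm)$ with $H_K(K)$ via \eqref{0015} and Proposition \ref{0045}, apply Achet's finiteness theorem, use density of separable points to conclude \'etaleness, then dualize extensions. But your converse direction has a genuine gap: you invoke Achet's Theorem 2.8 as an equivalence (``$\Ext^1$ finite if and only if unirational''), whereas that theorem, as cited and used in the paper, only gives the implication ``unirational $\Rightarrow$ $\Ext^1_K(G_K,\Gm)$ finite''. The reverse implication is essentially the very statement being proved (finiteness of $\Ext^1$ over $K^{\sep}$ equals finiteness of $H_K(K^{\sep})$ equals \'etaleness of $H_K$), so appealing to it is circular unless you can point to an actual proof of that converse. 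The paper closes this direction by a separate concrete argument: if $H_K$ is \'etale, reduce to $H_K\cong\Z/p^n\Z$, identify $G_K^{\RP}\cong\nu_n(1)_K^{\RP}\cong(\Gm^{(1/p^n)}/\Gm)^{\RP}$ via \eqref{0015}, and observe that $G_K$ is then covered by the rational group $\Gm^{(1/p^n)}$, hence unirational. Some such argument is needed in your write-up.

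There is a second, smaller gap in statement (2). You apply Proposition \ref{0019} and the exactness of duality \cite[Proposition 3.4]{Suz24} to the quotient $G_K/G_K^{\mathrm{ur}}$, and you apply (3) to subgroups of that quotient; all of these require the groups involved to be \emph{wound}, and you never check that $G_K/G_K^{\mathrm{ur}}$ is wound (quotients of wound groups need not be wound, and a non-wound unirational subgroup of the quotient, such as a copy of $\Ga$, is not covered by (3)). The safer route, and presumably what the paper means by ``the rest follows by duality'', is to dualize the connected-\'etale sequence $0\to H_K^0\to H_K\to\pi_0(H_K)\to 0$ instead: this produces $0\to\pi_0(H_K)^{\ast}\to G_K^{\RP}\to(H_K^0)^{\ast}\to 0$ with all terms relatively perfect wound unipotent, $\pi_0(H_K)^{\ast}$ unirational by (1), and $(H_K^0)^{\ast}$ without non-trivial unirational subgroups by Proposition \ref{0019}, whence $\pi_0(H_K)^{\ast}$ is the maximal unirational subgroup and its dual is $\pi_0(H_K)$.
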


\begin{proof}
	By \cite[Section 10.3, Remark 4]{BLR90},
	$G_{K}$ is unirational if and only if
	$G_{K} \times_{K} K^{\sep}$ is unirational.
	Hence we may assume that $K = K^{\sep}$.
	Suppose $G_{K}$ is unirational.
	Then $\Ext^{1}_{K}(G_{K}, \Gm)$ is finite
	by \cite[Theorem 2.8]{Ach19b}.
	Note that $G_{K}$, being unipotent, is killed by a power of $p$.
	By \eqref{0015} and Proposition \ref{0045}, we have
		\[
				\Ext^{1}_{K}(G_{K}, \Gm)
			\cong
				\Hom_{K_{\RP}}(
					G_{K}^{\RP},
					\nu_{\infty}(1)_{K}^{\RP}
				)
			\cong
				H_{K}(K).
		\]
	Hence $H_{K}(K)$ is finite.
	Since $H_{K}(K)$ is dense in $H_{K}$
	by \cite[Proposition 8.15]{BS24},
	it follows that $H_{K}$ is \'etale.
	
	Suppose next that $H_{K}$ is \'etale.
	We may assume that $H_{K} = \Z / p^{n} \Z$.
	Then $G_{K}^{\RP} = \nu_{n}(1)_{K}^{\RP}$
	by \cite[Theorem 3.2 (i)]{Kat86}.
	Hence $G_{K}$ is covered by $\Gm^{(1 / p^{m})}$
        for some $m$.
        Since $\Gm^{(1 / p^{m})}$ is rational,
        this implies that
        $G_{K}$ is unirational.
	The rest of the statements then follow by duality.
\end{proof}

\begin{theorem} \label{0049}
	The category of relative perfections of
	unirational wound unipotent groups over $K$
	is canonically equivalent to
	the category of $p$-primary
	finite \'etale group schemes over $K$.
	Under this correspondence,
	$\nu_{n}(1)_{K}^{\RP}$ corresponds to $\Z / p^{n} \Z$.
\end{theorem}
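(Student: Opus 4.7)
The plan is to realize the claimed equivalence via the duality functor $D(G) := \sheafhom_{K_{\RP}}(G, \nu_{\infty}(1)_{K}^{\RP})$ introduced in Section 2. By \cite[Proposition 3.5]{Suz24}, $D$ is an involutive contravariant self-equivalence on the category of relatively perfect wound unipotent groups over $K$, so everything reduces to showing that this duality identifies the two subcategories in the statement and then reading off the specific pairing on $\nu_{n}(1)_{K}^{\RP}$.

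First I would check that $D$ sends relative perfections of unirational wound unipotent groups to $p$-primary finite \'etale group schemes. By Proposition \ref{0007}, $D(G)$ is \'etale when $G$ is the relative perfection of a unirational wound unipotent group. It is quasi-compact (algebraic groups are so by our convention) and $p$-primary (smooth commutative unipotent groups in characteristic $p$ are killed by some $p^{n}$); hence finite. Conversely, any $p$-primary finite \'etale group scheme $H$ is already a relatively perfect (by \cite[Corollary 1.9]{Kat86}) wound unipotent group, so $D(H)$ is a relatively perfect wound unipotent group by \cite[Proposition 3.5]{Suz24}. Choosing a wound unipotent $G$ with $G^{\RP} \cong D(H)$ via \cite[Propositions 8.7 and 8.13]{BS24}, the dual of $G$ is $D(D(H)) \cong H$, which is \'etale, so the other direction of Proposition \ref{0007} forces $G$ to be unirational.

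Mutual quasi-inverseness then follows from the double-duality isomorphism in \cite[Proposition 3.5]{Suz24}. For the explicit correspondence $\nu_{n}(1)_{K}^{\RP} \leftrightarrow \Z/p^{n}\Z$ I would compute $D(\nu_{n}(1)_{K}^{\RP})$ directly from \eqref{0015}: applying $\sheafhom_{K_{\RP}}(-, \nu_{\infty}(1)_{K}^{\RP})$ to the first sequence of \eqref{0015} and using the second sequence to identify $\sheafhom_{K_{\RP}}(\Gm^{\RP}, \nu_{\infty}(1)_{K}^{\RP}) \cong \Q_{p}/\Z_{p}$ yields the $p^{n}$-torsion $\Z/p^{n}\Z$; this is essentially the identification already invoked through \cite[Theorem 3.2 (i)]{Kat86} in the proof of Proposition \ref{0007}. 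The only subtle point I foresee is reconciling the word \emph{equivalent} with the fact that $D$ is contravariant: since Pontryagin duality is an involutive anti-equivalence on $p$-primary finite \'etale group schemes that fixes each $\Z/p^{n}\Z$, composing $D$ with it turns the anti-equivalence into a covariant equivalence without disturbing the listed correspondence.
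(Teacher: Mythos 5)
Your proposal is correct and follows essentially the same route as the paper: the duality functor $\sheafhom_{K_{\RP}}(\;\cdot\;, \nu_{\infty}(1)_{K}^{\RP})$ gives a contravariant equivalence between the two categories via Proposition \ref{0007} together with double duality, and composing with Pontryagin duality on $p$-primary finite \'etale group schemes converts it into a covariant equivalence fixing the correspondence $\nu_{n}(1)_{K}^{\RP} \leftrightarrow \Z/p^{n}\Z$. The paper's proof is just a terser statement of exactly these two steps, with the verifications you spell out (finiteness and $p$-primarity of the \'etale dual, the converse direction, and the computation from \eqref{0015}) left implicit.
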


\begin{proof}
	The two categories in question are contravariantly equivalent
	via the duality functor
	$\sheafhom_{K_{\RP}}(\;\cdot\;, \nu_{\infty}(1)_{K}^{\RP})$
	by Proposition \ref{0007}.
	But the latter category is also contravariantly equivalent to itself
	via the Pontryagin duality functor
	$\sheafhom_{K_{\et}}(\;\cdot\;, \Q_{p} / \Z_{p})$.
\end{proof}

This proves Theorem \ref{0051}.
We give a simple application,
which is a converse to Conjecture \ref{0012} \eqref{0048}
and a variant of
\cite[Section 10.3, Theorem 5 (b)]{BLR90}.
We return to general $S$ (not necessarily $\Spec K$).

\begin{proposition} \label{0064}
	Assume that $S$ has infinitely many points
        and $r = 1$.
	Let $G_{K}$ be a connected smooth group scheme over $K$.
	Assume that $G_{K}$ admits a quasi-compact N\'eron model.
	Then it has no non-zero unirational subgroup.
\end{proposition}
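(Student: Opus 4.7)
The plan is to argue by contradiction. Suppose $G_{K}$ admits a quasi-compact N\'eron model $G$ over $S$ and contains a non-zero unirational closed subgroup $U_{K}$, which after replacing by its identity component we may take to be smooth and connected. By \cite[Section 7.5, Proposition 1]{BLR90} (extended to N\'eron lft-models via \cite[Corollary 2.6]{Ove24}, as used repeatedly above), the schematic closure of $U_{K}$ in $G$ is a N\'eron model of $U_{K}$, and so it is quasi-compact. The goal is to find, possibly after a finite separable extension $L/K$, a closed subgroup of $U_{L}$ whose N\'eron model over the normalisation $S_{L}$ of $S$ in $L$ cannot be quasi-compact; since $S_{L}$ is finite over $S$ it still has infinitely many closed points, and base change along $S_{L} \to S$ preserves quasi-compactness of N\'eron models.

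A connected unirational smooth algebraic group is affine, hence fits into an extension $1 \to W \to U_{K} \to T \to 1$ with $T$ a torus and $W$ smooth connected unipotent. In the case $T \ne 0$, choose $L/K$ finite Galois splitting $T$; then $\Gm \hookrightarrow T_{L} \hookrightarrow U_{L}$ is a chain of closed immersions, but the N\'eron lft-model of $\Gm$ over $S_{L}$ has \'etale component group $\Z$ at every closed point and therefore has infinitely many connected components globally, contradicting that it inherits a quasi-compact N\'eron model from $U_{L}$. So we may assume $T = 0$, i.e.\ $U_{K}$ is smooth, connected and unipotent; moreover the existence of $G$ together with the local criterion \cite[Section 10.2, Theorem 1]{BLR90} forces $G_{K}$, hence $U_{K}$, to contain no $\Ga$, so $U_{K}$ is wound.

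Now apply Theorem \ref{0049}: the relative perfection $U_{K}^{\RP}$ corresponds, under the equivalence of that theorem, to a non-zero $p$-primary finite \'etale group scheme $H_{K}$ over $K$. Pick a finite separable extension $L/K$ such that $H_{L}$ admits a quotient isomorphic to $\Z/p\Z$. Dualising yields a closed subgroup $V^{\RP} \hookrightarrow U_{L}^{\RP}$ isomorphic to $\nu_{1}(1)_{L}^{\RP}$. By Proposition \ref{0018}\eqref{0016} (quasi-compact version), $V^{\RP}$ admits a quasi-compact relatively perfect N\'eron model over $S_{L}$; transferring back across Propositions \ref{0062} and \ref{0000}, this means that $\nu_{1}(1)_{L} = (\Res_{L^{1/p}/L}\Gm)/\Gm$ admits a quasi-compact N\'eron model over $S_{L}$. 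But this contradicts Oesterl\'e's example \cite[Section 10.1, Example 11]{BLR90}: the N\'eron model of this group has non-trivial \'etale component group $\Z/p\Z$ at every closed point of $S_{L}$, hence has infinitely many connected components globally.

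The hard part will be the relatively perfect bookkeeping in the unipotent case: one must verify that quasi-compactness of N\'eron models really is preserved by the comparisons in Propositions \ref{0062}, \ref{0000}, and \ref{0018}, and that the closed immersion $V^{\RP} \hookrightarrow U_{L}^{\RP}$ produced by duality arises from a closed immersion of quasi-compact smooth group schemes with $V$ in the relatively perfect equivalence class of $\nu_{1}(1)_{L}$ via \cite[Proposition 8.12]{BS24}. Granted these, the argument reduces cleanly to the non-quasi-compactness of Oesterl\'e's example over any base with infinitely many closed points.
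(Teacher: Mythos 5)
Your overall strategy is the same as the paper's: kill the torus part using the fact that $\Gm$ has no quasi-compact N\'eron model over a base with infinitely many points, then use the duality (Theorem \ref{0049}) to produce a copy of $\nu(1)^{\RP}$ inside the unirational part and contradict Oesterl\'e's example via Propositions \ref{0062}, \ref{0000} and \ref{0018}. The endgame is fine. But there is one genuine gap in the way you set things up: the blanket claim that ``base change along $S_{L}\to S$ preserves quasi-compactness of N\'eron models'' for an arbitrary finite separable $L/K$ is false in general. An anisotropic torus (e.g.\ the norm-one torus of a ramified separable quadratic extension) has a quasi-compact local N\'eron model, but after the separable base change splitting it the N\'eron model of $\Gm$ is not quasi-compact. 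Worse, you invoke this transfer precisely in the step whose purpose is to rule out tori, so that step is circular as written: the transfer principle only becomes valid (via gluing the \'etale-locus base change with the finitely many local N\'eron models at ramified points, using \cite[Section 10.1, Proposition 9]{BLR90} and the local criteria, which are insensitive to separable extensions) \emph{after} you know the maximal torus is zero and the group is wound unipotent. The paper avoids this entirely: it eliminates the torus by a direct argument over $S$ itself (a non-trivial torus is unramified at all but finitely many places, hence has infinite component groups there, hence no quasi-compact N\'eron model; then \cite[Section 7.1, Corollary 6]{BLR90}), and for the unipotent part it replaces $K$ only by an \emph{everywhere unramified} extension --- shrinking $S$ so that the Galois action on the geometric points of the dual $M_{K}$ is unramified --- for which N\'eron models honestly base change. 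You should restructure your argument the same way, or at least prove the transfer under the hypotheses where you actually need it.

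Two smaller points. First, your structure sequence $1\to W\to U_{K}\to T\to 1$ has the torus as a quotient, but you then use $T_{L}\hookrightarrow U_{L}$ as a chain of closed immersions; over an imperfect field the correct statement is that the maximal torus is a \emph{subgroup} with unipotent quotient, so the sequence should be written the other way around (this is easily repaired and is what the paper uses implicitly via the maximal torus). Second, the ``relatively perfect bookkeeping'' you defer is essentially the same bookkeeping the paper's own proof relies on (Proposition \ref{0018} is stated with a quasi-compact version, and Proposition \ref{0021} controls components under relative perfection), so that part is acceptable.
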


\begin{proof}
	Having a N\'eron model, $G_{K}$ does not contain $\Ga$.
	A non-trivial algebraic torus over $K$
	never admits a quasi-compact N\'eron model
	since $S$ has infinitely many closed points.
	Hence by \cite[Section 7.1, Corollary 6]{BLR90},
	the maximal torus of $G_{K}$ is zero.
	Therefore any unirational subgroup of $G_{K}$ has to be unipotent.
	Let $M_{K}$ be the dual
	of the maximal unirational subgroup of $G_{K}$.
	It is a finite \'etale group scheme over $K$
	by Theorem \ref{0049}.
	By shrinking $S$ if necessary,
	we may assume that the Galois action of $K$
	on the geometric points of $M_{K}$ is unramified everywhere.
	Since N\'eron models do not change by finite unramified extensions,
	we may assume that $M_{K}$ is constant.
	If $M_{K}$ is non-zero,
	then it contains a copy of $\Z / p \Z$,
	whose dual is $\nu(1)_{K}^{\RP}$.
	But the N\'eron model of
	$\nu(1)_{K} \cong \Gm^{(1 / p)} / \Gm$
	is not quasi-compact
	by \cite[Section 10.1, Example 11]{BLR90}.
	Therefore $M_{K} = 0$.
\end{proof}


\end{document}